\newcommand{\CM}{Cohen-Macaulay}
\newcommand{\ZZ}{\mathbb{Z} }
\newcommand{\wrt}{with respect to}
\newcommand{\D}{\mathcal{D} }
\newcommand{\bY}{\mathbf{Y}}
\newcommand{\bX}{\mathbf{X}}
\newcommand{\n}{\mathfrak{n} }
\newcommand{\m}{\mathfrak{m} }
\newcommand{\Q}{\mathfrak{Q} }
\newcommand{\M}{\mathbf{\mathcal{M}} }
\newcommand{\N}{\mathbf{\mathcal{N}} }
\newcommand{\E}{\mathbf{\mathcal{E}} }
\newcommand{\LL}{\mathbf{\mathcal{L}} }
\newcommand{\FF}{\mathcal{T}}
\newcommand{\R}{\mathcal{R} }
\newcommand{\Z}{\mathbb{Z} }
\newcommand{\rt}{\rightarrow}
\newcommand{\ov}{\overline}
\newcommand{\wh}{\widehat }
\newcommand{\Ass}{\operatorname{Ass}}
\newcommand{\depth}{\operatorname{depth}}
\newcommand{\fg}{\operatorname{fg}}
\newcommand{\height}{\operatorname{height}}
\newcommand{\injdim}{\operatorname{injdim}}
\newcommand{\Proj}{\operatorname{Proj}}
\newcommand{\Spec}{\operatorname{Spec}}
\newcommand{\Supp}{\operatorname{Supp}}
\newcommand{\Hom}{\operatorname{Hom}}
\newcommand{\Ext}{\operatorname{Ext}}
\theoremstyle{plain}
\newtheorem{theorem}{Theorem}[section]
\newtheorem{corollary}[theorem]{Corollary}
\newtheorem{lemma}[theorem]{Lemma}
\newtheorem{proposition}[theorem]{Proposition}
\newtheorem{question}[theorem]{Question}
\theoremstyle{definition}
\newtheorem{definition}[theorem]{Definition}
\newtheorem{remark}[theorem]{Remark}
\newtheorem{example}[theorem]{Example}
\newtheorem{construction}[theorem]{Construction}
\theoremstyle{remark}
\begin{document}

\title{Koszul homology of $F$-finite module and applications}
\author{Tony~J.~Puthenpurakal}
\date{\today}
\address{Department of Mathematics, IIT Bombay, Powai, Mumbai 400 076}

\email{tputhen@math.iitb.ac.in}
\subjclass{Primary 13D45, 14B15; Secondary 13N10, 32C36}
\keywords{local cohomology, graded local cohomology, Koszul cohomology, ring of differential operators}

 \begin{abstract}
Let $k$ be an infinite field of characteristic $p > 0$ and let $R = k[Y_1,\ldots, Y_d]$ (or $R = k[[Y_1,\ldots, Y_d]]$). Let $F \colon \text{Mod}(R) \rt \text{Mod}(R)$ be the Frobenius functor and let $\M$ be a $F_R$-finite module (in the sense of Lyubeznik \cite{Lyu-2}). We show that if $r \geq 1$ then the Koszul homology modules \\
 $H_i(Y_1,\ldots, Y_r; \M)$ are $F_{\ov{R}}$-finite  modules where $\overline{R} = R/(Y_1,\ldots, Y_r)$ for $i = 0, \ldots, r$. As an application if  $A$ is a regular ring containing a field of characteristic $p > 0$ and $S = A[X_1,\ldots, X_m]$ is standard graded and $I$ is an arbitrary graded ideal in $S$ then we  give a comprehensive study of graded components of local cohomology  modules $H^i_I(S)$.
 This extends in positive characteristic results we proved in \cite{P}. We study $H^i_I(S) $ when $A$ is local and prove that if $S/I$ is equidimensional and $\Proj(S/I)$ is \CM \ then $H^i_I(S)_n = 0$ for all $n \geq 0$ and for all $i > \height I$. If $(B,\n)$ is a equicharacteristic local Noetherian ring with infinite residue field and  with a surjective map $\pi \colon T \rt B$ where $(T,\n)$ is regular local then we show that the Koszul cohomology modules $H^j(\n, H^{\dim T - i}_{\ker \pi }(T))$ depend only on $A, i, j$ and not on $T$ and $\pi$.
\end{abstract}
 \maketitle
\section{introduction}
Let $k_0$ be a field of characteristic zero and let   $R_0 = k_0[Y_1, \ldots, Y_d]$. Let $D_{ k_0}(R_0) = k_0<Y_1, \ldots, Y_d, \partial_1, \ldots, \partial_d>$ (where $\partial_i = \partial_i/\partial Y_i$) be  the ring of $k_0$-differential operators on $R_0$.  Let $\N$ be a holonomic  $D_{ k_0}(R_0)$-module. Fix $r$ with $1\leq r \leq d$. Then the De Rham cohomology modules $H^i(\partial_1, \ldots, \partial_r; \N)$ and the Koszul cohomology modules $H^i(Y_1,\ldots, Y_r; \N)$  are holonomic $D_{ k_0}(R_0/(Y_1, \ldots, Y_r))$-modules; (see \cite[6.3]{Bjork}
for de Rham cohomology, the case of Koszul cohomology is similar). One might ask whether a similar result is true when $k_0$ is replaced by a field $k$ of characteristic $p > 0$. In this case for De Rham cohomology  it is well-known that the analogous result is false, for instance see \ref{bad-de-Rham}. We do not know where the analogous result is true for Koszul cohomology modules. The purpose of this paper is to prove that Koszul cohomology  behaves well of a large subclass of holonomic $D_k(k[Y_1,\ldots, Y_m])$-modules.

For the rest of this paper $R$ will denote a regular ring containing a field of $k$ of characteristic $p > 0$. Let $ F \colon \text{Mod}(R) \rt \text{Mod}(R) $ be the Frobenius functor  of Peskine-Szpiro \cite{PS}. For the definition of a $F_R$-module see \cite{Lyu-2}. Let $F_R$-Mod be the category of $F_R$-modules and let $F_R$-mod be the category of $F_R$- finite modules.

Our result regarding the Koszul cohomology is

\begin{theorem}\label{main-koszul}
Let $k$ be an infinite field of characteristic $p > 0$. Let $R$ be one of the following regular rings
\begin{enumerate}[\rm (i)]
  \item $k[Y_1, \ldots, Y_d]$.
  \item $k[[Y_1,\ldots, Y_d]]$.
  \item $A[X_1, \ldots, X_m]$ where $A = k[[Y_1,\ldots, Y_d]]$.
\end{enumerate}
Let $\M$ be a $F_R$-finite module. Fix $r \geq 1$. Then the Koszul homology   \\ modules $H_i(Y_1,\ldots, Y_r; \M)$ are
$F_{\ov{R}}$-finite  modules (where $\ov{R} = R/(Y_1,\ldots, Y_r)$) for $i = 0, \ldots,r$.
\end{theorem}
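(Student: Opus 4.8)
The plan is to reduce to the case $r=1$ and there to construct the $F_{\overline R}$-structure on the Koszul homology by hand, exploiting that each of the three rings in the statement admits a ring section $\overline R\hookrightarrow R$. First some preliminaries. Since $R$ is regular, the Frobenius functor $F=F_R$ is exact, and applying $F$ to a complex of finitely generated free modules raises the matrices of the differentials to their $p$-th Frobenius powers; so $F\bigl(K_\bullet(Y_1,\dots,Y_r;R)\bigr)\cong K_\bullet(Y_1^p,\dots,Y_r^p;R)$ and, more generally, $F^e\bigl(K_\bullet(\underline Y;R)\otimes_R N\bigr)\cong K_\bullet(\underline Y^{[p^e]};R)\otimes_R F^e(N)$ for every $R$-module $N$. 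Since $\M$ is $F$-finite we may fix an injective generating morphism $\beta\colon M\hookrightarrow F(M)$ with $M$ finitely generated, so $\M=\bigcup_{e\ge 0}F^e(M)$ and hence $H_i(\underline Y;\M)=\bigcup_e H_i(\underline Y;F^e(M))$, each $H_i(\underline Y;F^e(M))$ being a finitely generated $\overline R$-module (it is killed by $(\underline Y)$).

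For the reduction to $r=1$, put $\underline Y'=(Y_1,\dots,Y_{r-1})$ and $R'=R/(\underline Y')$, which is again one of the three allowed rings, with $Y_r$ mapping to (a lift of) a variable of $R'$ and $R'/(Y_r)=\overline R$. The mapping-cone presentation $K_\bullet(\underline Y;\M)=\operatorname{Cone}\bigl(K_\bullet(\underline Y';\M)\xrightarrow{Y_r}K_\bullet(\underline Y';\M)\bigr)$ yields short exact sequences
\[
0\longrightarrow H_i(\underline Y';\M)\big/Y_r H_i(\underline Y';\M)\longrightarrow H_i(\underline Y;\M)\longrightarrow \bigl(0:_{H_{i-1}(\underline Y';\M)}Y_r\bigr)\longrightarrow 0 ,
\]
whose outer terms are $H_0(Y_r;-)$ and $H_1(Y_r;-)$ applied to the modules $H_i(\underline Y';\M),H_{i-1}(\underline Y';\M)$, which are $F_{R'}$-finite by induction on $r$. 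Carrying out the whole construction compatibly with the presentation $\M=\bigcup F^e(M)$ makes this a short exact sequence of $F_{\overline R}$-modules, so once the case $r=1$ is settled the outer terms are $F_{\overline R}$-finite and, since $F_{\overline R}$-finite modules are closed under extensions inside $F_{\overline R}$-modules (Lyubeznik), so is $H_i(\underline Y;\M)$. For the rest of the proof $r=1$; write $y=Y_1$ and $\overline R=R/(y)$.

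There are two modules to handle, $H_0(y;\M)=\M/y\M$ and $H_1(y;\M)=(0:_\M y)$. For $H_0$, the elementary identity $\overline R\otimes_R F(N)=F_{\overline R}(\overline R\otimes_R N)$ shows $\M/y\M=\varinjlim_e F^e_{\overline R}(M/yM)$ with structure map $\overline R\otimes_R\beta$, and $M/yM$ is finitely generated over $\overline R$, so $\M/y\M$ is $F_{\overline R}$-finite. The essential point is $H_1$, and it rests on the following Key Lemma, where the shape of $R$ is used: \emph{if $L$ is a finitely generated $R$-module with $yL=0$, then $(0:_{F(L)}y)\cong F_{\overline R}(L)$, naturally in $L$.} Indeed, each allowed $R$ admits a ring section $\overline R\hookrightarrow R$ realizing $R$ as a flat $\overline R$-algebra with $R/yR=\overline R$ and $y$ a nonzerodivisor on $R$; lifting an $\overline R$-free presentation $\overline R^{\,b}\xrightarrow{\bar D}\overline R^{\,a}\to L\to 0$ through this section and adjoining the relations $y\cdot(\text{generators})$ gives $F(L)=Q/y^pQ$ with $Q=R^{\,a}/D^{[p]}R^{\,b}=F_{\overline R}(L)\otimes_{\overline R}R$; as $-\otimes_{\overline R}R$ preserves "$y$ is a nonzerodivisor", the map $Q/yQ\to y^{p-1}Q/y^pQ$, $\ q\mapsto y^{p-1}q$, is an isomorphism, so $(0:_{F(L)}y)=(0:_{Q/y^pQ}y)=y^{p-1}Q/y^pQ\cong Q/yQ=F_{\overline R}(L)$. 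Granting this, set $L_e=(0:_{F^e(M)}y)$, a finitely generated module killed by $y$. Exactness of $F$ gives $(0:_{F^{e+1}(M)}y^p)=F(L_e)$, whence $L_{e+1}=(0:_{F^{e+1}(M)}y)=(0:_{F(L_e)}y)\cong F_{\overline R}(L_e)$ by the Key Lemma; iterating and using naturality, $L_e\cong F^e_{\overline R}(L_0)$ compatibly with the transition maps, so $(0:_\M y)=\bigcup_e L_e\cong\varinjlim_e F^e_{\overline R}(L_0)$ with $L_0=(0:_M y)$ finitely generated. Hence $H_1(y;\M)$ is $F_{\overline R}$-finite.

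The main obstacle is precisely the Key Lemma together with its iteration: a priori $(0:_\M y)$ is only the increasing union of the finitely generated $y$-torsion modules $(0:_{F^e(M)}y)$ with no visible Frobenius-compatible transition maps, and the content is to recognize $(0:_{F(L)}y)$ as $F_{\overline R}(L)$ — which is what forces $R$ to be one of the three listed rings (so that the section $\overline R\hookrightarrow R$ exists). I also expect the functoriality needed to turn the mapping-cone sequences into short exact sequences of $F_{\overline R}$-modules to require some bookkeeping, although it should be routine once the colimit presentations are used throughout.
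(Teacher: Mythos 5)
Your $r=1$ argument is essentially the paper's: your Key Lemma is Lyubeznik's natural isomorphism $\Hom_R(R/(y),F_R(-))\cong \omega^{p-1}\otimes_{\ov R}F_{\ov R}(-)$ together with the observation that $\omega=\Hom_R(R/(y),R/(y^p))$ is trivialized by multiplication by $y^{p-1}$ (Lemma \ref{omega} and Remark \ref{lyu-w} in the paper), and your iteration of $L_{e+1}\cong F_{\ov R}(L_e)$ is the content of the formal transfer result (Theorem \ref{formal}) applied to $\eta=\Hom_R(R/(y),-)$. That part is fine.

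The induction step, however, has a genuine gap, and it is exactly the one the paper flags. Your plan is to use the mapping-cone short exact sequence
\[
0\rt H_0(Y_r;H_i(\bY';\M))\rt H_i(\bY;\M)\rt H_1(Y_r;H_{i-1}(\bY';\M))\rt 0
\]
and to invoke closure of $F_{\ov R}$-finite modules under extensions. But that closure holds only for extensions \emph{in the category of $F_{\ov R}$-modules}, and the middle term carries no a priori $F_{\ov R}$-structure — producing one is precisely what the theorem asserts. The remark following Theorem \ref{r1} states explicitly that the author does not know whether this sequence can be realized in the category of $F_{\ov R}$-modules, so the step you dismiss as "routine bookkeeping" is the entire open difficulty. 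Concretely: applying $F_R$ to the Koszul complex replaces $\underline Y$ by $\underline Y^{[p]}$, so what one gets naturally is $H_i(\underline Y^{[p]};F_R(N))$, and for $0<i<r$ with $r\geq 2$ there is no known natural isomorphism of this with $F_{\ov R}(H_i(\underline Y;N))$ compatible with the connecting maps; the outer terms acquire their $F_{\ov R}$-structures by two \emph{different} mechanisms (base change for $H_0$, the $\omega^{p-1}$-twisted $\Hom$ for $H_1$), and there is no argument that the maps in the sequence intertwine them. A telltale sign that something is missing is that your proof never uses the hypothesis that $k$ is infinite. The paper's induction avoids the mapping cone altogether: it treats $H_r$ separately by iterating the $(0:_{-}X)$ functor (Theorem \ref{top}); for $i<r$ it first kills the $(Y_1,\ldots,Y_r)$-torsion part $\N=\Gamma_{(Y_1,\ldots,Y_r)}(\M)$ using a $D$-module computation showing $H_i(\bY;\N)=0$ for $i<r$ (Lemma \ref{m-torsion}), and then uses prime avoidance over the infinite field $k$ to find a $k$-linear combination $Z_1$ of $Y_1,\ldots,Y_r$ that is regular on $\M/\N$, so that $H_*(\bY;\M/\N)\cong H_*(Z_2,\ldots,Z_r;(\M/\N)/Z_1(\M/\N))$ and the induction hypothesis applies to the $F_{R/Z_1R}$-finite module $(\M/\N)/Z_1(\M/\N)$. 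You would need to either supply the missing $F$-module structure on the mapping-cone sequence (which would be a new result) or replace your induction step by an argument of this kind.
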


As a consequence we get
\begin{corollary}\label{cor-koszul}
Let $k$ be an infinite field of characteristic $p > 0$. Let $R$ be one of the following regular rings
\begin{enumerate}[\rm (i)]
  \item $k[Y_1, \ldots, Y_d]$.
  \item $k[[Y_1,\ldots, Y_d]]$.
\end{enumerate}
Let $\M$ be a $F_R$-finite module. Then the Koszul homology  modules \\ $H_i(Y_1,\ldots, Y_d; \M)$ are
finite dimensional $k$-vector spaces for $i = 0, \ldots,d$.
\end{corollary}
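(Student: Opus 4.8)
The plan is to specialize Theorem~\ref{main-koszul} to the case $r = d$. In both cases (i) and (ii) of Corollary~\ref{cor-koszul} the ring $R$ appears among the rings listed in (i), (ii) of Theorem~\ref{main-koszul}, so taking $r = d$ there the residue ring $\ov{R} = R/(Y_1,\ldots,Y_d)$ is just $k$, and the theorem tells us that each Koszul homology module $H_i(Y_1,\ldots,Y_d;\M)$, for $i = 0,\ldots,d$, is an $F_k$-finite module. Thus everything reduces to the following elementary remark: an $F_k$-finite module, $k$ being a field, is automatically a finite-dimensional $k$-vector space.

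To see this I would invoke the standard description of an $F_k$-finite module $\N$ as a direct limit
\[
\N \;=\; \varinjlim \Bigl( N \xrightarrow{\ \beta\ } F(N) \xrightarrow{\ F(\beta)\ } F^2(N) \xrightarrow{\ F^2(\beta)\ } \cdots \Bigr),
\]
where $N$ is a finitely generated $k$-module (a generating morphism, or root, of $\N$) and $\beta \colon N \rt F(N)$ is $k$-linear. Since $k$ is a field, $N$ is finite-dimensional, and the Peskine--Szpiro Frobenius functor $F$ on $\text{Mod}(k)$ is base change along the Frobenius ring map $k \rt k$; hence $\dim_k F^n(N) = \dim_k N$ for every $n \geq 0$. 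Consequently $\N$ is the increasing union of the images of the spaces $F^n(N)$ in $\N$, each of dimension at most $\dim_k N$, so $\dim_k \N \leq \dim_k N < \infty$, as claimed.

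I do not expect a genuine obstacle here: once Theorem~\ref{main-koszul} is available, Corollary~\ref{cor-koszul} is a purely formal consequence, the only real input being that the Frobenius functor preserves the dimension of a finite-dimensional $k$-vector space, so that the colimit defining an $F_k$-finite module cannot escape the class of finite-dimensional $k$-spaces. (If one prefers, the same conclusion can be read off from Lyubeznik's structure theory of $F$-finite modules over a field.)
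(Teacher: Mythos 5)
Your proposal is correct and follows essentially the same route as the paper: apply Theorem~\ref{main-koszul} with $r=d$ to get that each $H_i(Y_1,\ldots,Y_d;\M)$ is $F_k$-finite, then observe that an $F_k$-finite module over a field is finite dimensional because the Frobenius functor preserves the dimension of finite-dimensional $k$-vector spaces. The paper phrases the last step via a root $U$ (so the maps $F^n(U)\hookrightarrow F^{n+1}(U)$ are injections between spaces of equal dimension, hence isomorphisms, giving $\N\cong U$), while you bound the dimension of the colimit by the dimension of the images of a generating morphism; these are the same observation.
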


When $R$ is graded polynomial ring and the corresponding $F_R$-module is graded we can say more:
\begin{corollary}\label{cor-graded}
Let $k$ be an infinite field of characteristic $p > 0$. Let  \\ $R = k[Y_1, \ldots, Y_d]$ be standard graded.
Let $\M$ be a graded $F_R$-finite module. Then the Koszul homology  modules  $H_i(Y_1,\ldots, Y_d; \M)$ for $i = 0, \ldots, d$ are
concentrated in degree zero, i.e., $H_i(Y_1,\ldots, Y_d; \M)_r = 0$ for $r \neq 0$.
\end{corollary}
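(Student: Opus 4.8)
The plan is to use the graded refinement of Theorem~\ref{main-koszul} together with the elementary fact that the Frobenius functor multiplies internal degrees by $p$. Since $Y_1,\dots,Y_d$ are homogeneous of degree $1$ and $\M$ is graded, the Koszul complex $K_\bullet(Y_1,\dots,Y_d;\M)$ (whose $j$-th term is $\bigwedge^j(R(-1)^d)\otimes_R\M$, with degree-$0$ differentials) is a complex of graded $R$-modules, so each $N_i := H_i(Y_1,\dots,Y_d;\M)$ is a $\Z$-graded $R$-module. Being annihilated by $(Y_1,\dots,Y_d)$, it is a graded module over $\ov R = R/(Y_1,\dots,Y_d)=k$, where $k$ carries the trivial grading (concentrated in degree $0$).

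Next, the construction proving Theorem~\ref{main-koszul} is compatible with gradings: a graded $F_R$-finite module $\M$ admits a homogeneous generating morphism $\beta\colon M\to F(M)$ of degree $0$, and all the maps produced from it in the proof (and the induced maps on Koszul homology) are homogeneous of degree $0$; hence each $N_i$ carries the structure of a \emph{graded} $F_{\ov R}$-finite module, in particular a degree-$0$ isomorphism $N_i\xrightarrow{\ \sim\ } F_{\ov R}(N_i)$. Now recall the standard fact that $F(R(a))\cong R(pa)$ for all $a\in\Z$; concretely $F(M)=R\otimes_{R,\phi}M$ is graded by $\deg(r\otimes m)=\deg r + p\deg m$. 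Specializing to $\ov R=k$ with its trivial grading, for a $\Z$-graded $k$-vector space $V=\bigoplus_n V_n$ we obtain
\[
F_{\ov R}(V)_n \;=\;
\begin{cases}
k\otimes_{k,\phi}V_{n/p}, & p\mid n,\\[1mm]
0, & p\nmid n,
\end{cases}
\]
so that $\dim_k F_{\ov R}(V)_n=\dim_k V_{n/p}$ when $p\mid n$ and $F_{\ov R}(V)_n=0$ when $p\nmid n$.

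Combining the degree-$0$ isomorphism $N_i\cong F_{\ov R}(N_i)$ with the displayed formula gives $(N_i)_n=0$ whenever $p\nmid n$, and $\dim_k(N_i)_n=\dim_k(N_i)_{n/p}$ whenever $p\mid n$. Suppose $(N_i)_n\neq 0$ for some $n\neq 0$. Then $p\mid n$, so $(N_i)_{n/p}\neq 0$ with $n/p\neq 0$, so $p\mid n/p$, i.e.\ $p^2\mid n$; iterating, $p^e\mid n$ for all $e\geq 1$, which forces $n=0$, a contradiction. Hence $(N_i)_n=0$ for every $n\neq 0$, i.e.\ $H_i(Y_1,\dots,Y_d;\M)$ is concentrated in degree zero. (Alternatively one may first apply Corollary~\ref{cor-koszul} to see $N_i$ is finite-dimensional, hence $(N_i)_n=0$ for $|n|\gg0$, and then run the same divisibility argument, but finiteness is not actually needed.)

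The only nonformal point is the second step: one must verify that the proof of Theorem~\ref{main-koszul} genuinely respects internal degrees and outputs a graded $F_{\ov R}$-finite module whose structure isomorphism is homogeneous of degree $0$. This is the delicate part, since $F$ carries a module annihilated by $(Y_1,\dots,Y_d)$ to one annihilated only by $(Y_1^p,\dots,Y_d^p)$ — precisely the twist dealt with in Theorem~\ref{main-koszul} — and one needs that resolution to carry the grading along. Granting that, everything else is routine bookkeeping with degrees, and I expect no obstacle beyond making this compatibility precise.
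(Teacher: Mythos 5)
Your proof is correct, but it takes a genuinely different route from the paper's. The paper does not argue via the graded $F$-module structure on the Koszul homology modules themselves: it splits off $\N=\Gamma_{(Y_1,\ldots,Y_d)}(\M)$, identifies $\N$ with $E(k)(d)^s$ using \cite[5.6]{MaZhang} (socle elements of a graded $F_R$-finite module supported at the irrelevant maximal ideal sit in degree $-d$), computes $H_*(Y_1,\ldots,Y_d;E(d))$ by hand, and then reduces $\ov{\M}$ to the case $d=1$ by a homogeneous linear change of variables and induction; the crux in the case $d=1$ is the Eulerian property of graded $F_R$-finite modules \cite[4.4]{MaZhang}, namely that $\mathcal{E}_{p^s}m=\binom{p^st}{p^s}m$ with $p\nmid\binom{p^st}{p^s}$ forces every homogeneous $m$ of degree $\neq 0$ into $Y\M$. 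You instead push everything down to $\ov{R}=k$ and exploit the fact that Frobenius multiplies internal degrees by $p$, so a degree-preserving isomorphism $N_i\cong F_{k}(N_i)$ forces $p^e\mid n$ for all $e$ whenever $(N_i)_n\neq 0$. This is cleaner, treats all $i$ and the torsion part uniformly, avoids the Eulerian machinery, and (as you note) does not even need finiteness — any graded $F_k$-module with degree-preserving structure isomorphism is concentrated in degree zero. The price is the input you correctly flag as delicate: that each $H_i(Y_1,\ldots,Y_d;\M)$ is a \emph{graded} $F_{\ov{R}}$-finite module with degree-$0$ structure isomorphism. That is exactly the content of Remark \ref{grading} (the graded analogue of Theorem \ref{main-koszul}), which the paper asserts with only a brief justification, so your argument is complete modulo a citation the paper itself supplies. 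One small point worth making explicit: the structure morphism $\theta$ of an $F_R$-finite module is an isomorphism (it is induced by the direct-limit construction), which is what your dimension count $\dim_k(N_i)_n=\dim_k(N_i)_{n/p}$ actually uses.
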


We now give application of Theorem \ref{main-koszul}
\s\label{std}  \textbf{Application-I:} \\
 \emph{Standard Assumption:} From henceforth $A$ will denote a regular ring containing a field of characteristic $p > 0$. Let $R = A[X_1,\ldots, X_m]$ be standard graded with $\deg A = 0$ and $\deg X_i = 1$ for all $i$. We also assume $m \geq 1$. Let $\FF$ be a
graded Lyubeznik functor on
$\ ^*Mod(R)$. Set $\M = \FF(R)$. Note $\M$ is a graded $R$-module. Set $\M = \bigoplus_{n \in \ZZ}\M_n$.

We extend in char $p > 0$ the following results which was proved in char $0$ in \cite{P}.

 \textbf{I:} \textit{(Vanishing:)}  The first result we prove is that vanishing of almost all graded components of $M$ implies vanishing of $M$. More precisely we show
 \begin{theorem}\label{vanish}
(with hypotheses as in \ref{std}).  If $\M _n = 0$ for all $|n|  \gg 0$ then
$\M = 0$.
 \end{theorem}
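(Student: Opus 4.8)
The plan is to reduce to the Koszul-homology statement of Corollary~\ref{cor-graded} by a standard device: relate the graded components of $\M$ to Koszul homology of a single $F$-finite module over a polynomial ring in finitely many variables. First I would observe that, since $\FF$ is a graded Lyubeznik functor on $^*\!\mathrm{Mod}(R)$ with $R = A[X_1,\dots,X_m]$ standard graded, the module $\M = \FF(R)$ is an $F_R$-finite module in the graded sense (this is where characteristic $p$ enters and replaces the $D$-module formalism used in \cite{P}); in particular each graded piece $\M_n$ is a finitely generated $A$-module and $\M$ is the $R$-module associated to an $F_R$-finite module. The key reduction is that for a graded $R$-module of this type, the $n$-th graded component $\M_n$ can be recovered, up to the $F$-finite structure, from Koszul homology $H_\bullet(X_1,\dots,X_m;\M)$ shifted appropriately, because $\ov R = R/(X_1,\dots,X_m) = A$; more precisely $H_0(X_1,\dots,X_m;\M) = \M/(X_1,\dots,X_m)\M$ and its graded pieces are the ``top'' nonzero components, while $H_m(X_1,\dots,X_m;\M) = (0:_\M (X_1,\dots,X_m))$ captures the ``bottom'' ones.

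Next I would argue by contradiction. Suppose $\M \neq 0$ but $\M_n = 0$ for all $|n| \gg 0$. Then $\M$ is a nonzero graded $R$-module concentrated in finitely many degrees. Pick the largest $n$ with $\M_n \neq 0$; then $X_i \M_n = 0$ for all $i$, so $\M_n \subseteq (0 :_\M (X_1,\dots,X_m)) = H_m(X_1,\dots,X_m;\M)$, and in fact $\M_n$ contributes a nonzero class in degree $n$ of some $H_i(X_1,\dots,X_m;\M)$. But by Theorem~\ref{main-koszul} (applied in case (iii), or its graded analogue leading to Corollary~\ref{cor-graded}) the Koszul homology $H_i(X_1,\dots,X_m;\M)$ is an $F_{\ov R}$-finite module over $\ov R = A$, and when everything is graded Corollary~\ref{cor-graded} forces each $H_i(X_1,\dots,X_m;\M)$ to be concentrated in degree zero. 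Hence the only degree in which Koszul homology can be nonzero is $0$, which pins $\M$ itself to be concentrated around degree $0$; iterating (or a direct induction on the length of the support of $\M$ in $\ZZ$, peeling off top and bottom degrees via $H_m$ and $H_0$) shows $\M$ must be zero, a contradiction.

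The main obstacle, and the step needing care, is the passage from the ``few variables $Y_1,\dots,Y_d$'' setting of Corollary~\ref{cor-graded} to the present setting with variables $X_1,\dots,X_m$ over the coefficient regular ring $A$: one must check that $\M = \FF(R)$ genuinely carries a graded $F_R$-finite structure for which the Koszul homology results apply, i.e., that graded Lyubeznik functors preserve $F$-finiteness and gradedness — this is the char $p$ analogue of the holonomicity input in \cite{P} and should follow from Lyubeznik's theory \cite{Lyu-2} together with the fact that the defining operations of a Lyubeznik functor (local cohomology, kernels, cokernels, extensions) stay within $F_R$-finite modules. Once that structural point is in place, the contradiction argument above is essentially formal. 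I would also need the elementary but essential remark that a graded module all of whose Koszul homologies with respect to a homogeneous system of parameters of the irrelevant ideal are concentrated in a single degree is itself ``thin'' enough to be forced to vanish once we already know it lives in finitely many degrees — this is a short argument using the Koszul complex as a finite free resolution and descending/ascending induction on degree.
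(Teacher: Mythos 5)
There is a genuine gap: your argument never reduces the general coefficient ring $A$ to a field, and the results you invoke do not apply in the form you use them. Corollary \ref{cor-graded} (Koszul homology concentrated in degree zero) is proved only for $R=k[Y_1,\dots,Y_d]$ over a field $k$, via the Eulerian property and the structure of $\Gamma_{\mathfrak m}(\M)$ as copies of $E(k)(d)$; nothing in the paper (and nothing obvious) gives the analogous concentration for $H_i(X_1,\dots,X_m;\M)$ when $R=A[X_1,\dots,X_m]$ with $A$ an arbitrary regular ring containing a field of characteristic $p$. Moreover, Theorem \ref{main-koszul} in case (iii) asserts $F$-finiteness of the Koszul homology with respect to the \emph{coefficient-ring} variables $Y_1,\dots,Y_r$ of $A=k[[Y_1,\dots,Y_d]]$, not with respect to $X_1,\dots,X_m$ as you use it. (Also, your claim that each $\M_n$ is a finitely generated $A$-module is false in general --- see Theorem \ref{inf-gen}.) Finally, even granting degree-zero concentration of all the $H_i(X;\M)$, the concluding step ``iterating/peeling off top and bottom degrees shows $\M=0$'' is not an argument: pinning the top degree at $0$ and the bottom degree at $0$ only shows $\M=\M_0$ is $R_+$-torsion, and one still needs a structural reason why a nonzero $R_+$-torsion graded $F_R$-finite module cannot live in degree $0$; over a field this comes from $\Gamma_{R_+}(\M)\cong E(k)(m)^s$, which is again unavailable over general $A$.

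The paper's proof supplies exactly the missing reduction. It first proves the theorem when $A$ is an infinite field (Theorem \ref{vanishing-std}), by showing the top degree is $0$ via $H_m(X_1,\dots,X_m;\M)$, showing $\Gamma_{R_+}(\M)=0$, choosing a linear $\M$-regular element by prime avoidance, and inducting on $m$. For general $A$ it then argues by contradiction: if $\M_c\neq 0$, localize and complete at a minimal prime $P$ of $\M_c$, write $\widehat{A_P}=K[[Y_1,\dots,Y_g]]$ (enlarging $K$ to be infinite if needed), note $\M_c$ becomes $E(K)^\alpha$ there, and apply Theorem \ref{main-koszul}(iii) to the variables $Y_1,\dots,Y_g$ to obtain a graded $F_{K[X_1,\dots,X_m]}$-finite submodule $V=H_g(Y_1,\dots,Y_g;\LL)\subseteq\LL$ with $V_c\neq 0$ and $V_n=0$ for $|n|\gg 0$, contradicting the field case. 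If you want to salvage your approach, you must add this localization--completion--Koszul-in-the-$Y$'s reduction; the Koszul homology in the $X$'s alone does not suffice.
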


\textbf{II} (\textit{Tameness and Rigidity :}) In view of Theorem \ref{vanish}, it follows that if
$\M = \FF(R) = \bigoplus_{n \in \ZZ}\M_n $ is \emph{non-zero} then either $\M_n \neq 0$ for infinitely  many $n \ll 0$, OR, $\M_n \neq 0$ for infinitely  many $n \gg 0$. We show that $\M$ is \emph{tame} and \emph{rigid}. More precisely
\begin{theorem}\label{tame}
(with hypotheses as in \ref{std}).  Then we have
\begin{enumerate}[\rm (a)]
\item
The following assertions are equivalent:
\begin{enumerate}[\rm(i)]
\item
$\M_n \neq 0$ for infinitely many $n \ll 0$.
\item
There exists $r$ such that $\M_n \neq 0$ for all $n \leq r$.
\item
$\M_n \neq 0$ for all $n \leq -m$.
\item
$\M_r \neq 0$ for  some $r \leq -m$.
\end{enumerate}
\item
The following assertions are equivalent:
\begin{enumerate}[\rm(i)]
\item
$\M_n \neq 0$ for infinitely many $n \gg 0$.
\item
There exists $r$ such that $\M_n \neq 0$ for all $n \geq r$.
\item
$\M_n \neq 0$ for all $n \geq 0$.
\item
$\M_n \neq 0$ for some $n \geq 0$.
\end{enumerate}
\item
(When $m \geq 2$.)
The following assertions are equivalent:
\begin{enumerate}[\rm(i)]
\item
$\M_t \neq 0$ for some $t$ with $-m < t < 0$.
\item
$\M_n \neq 0$ for all $n \in \ZZ$.
\end{enumerate}
\end{enumerate}
\end{theorem}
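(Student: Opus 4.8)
The plan is to follow the characteristic-zero argument of \cite{P}, with $F$-finiteness playing the role that holonomicity plays there and Theorem~\ref{main-koszul} playing the role of the classical fact that De~Rham and Koszul homology of holonomic modules stay holonomic. In all three parts the implications (ii)$\Rightarrow$(i), (iii)$\Rightarrow$(ii), (iii)$\Rightarrow$(iv) (and their analogues) are trivial; the content is the rigidity-type implications (iv)$\Rightarrow$(iii) and (i)$\Rightarrow$(iii) in (a) and (b), and (i)$\Rightarrow$(ii) in (c). First I would reduce to the setting of Theorem~\ref{main-koszul}: since Lyubeznik functors commute with flat base change, for any flat $A\to A'$ one has $\FF(A'[X_1,\dots,X_m])\cong\M\otimes_AA'$ as $\ZZ$-graded modules, hence $(\M\otimes_AA')_n\cong\M_n\otimes_AA'$; because every assertion of the theorem concerns vanishing of individual graded pieces $\M_n$, faithfully flat descent through localization at a maximal ideal, completion, and an enlargement of the residue field (Cohen's structure theorem) lets me assume $A=k[[Y_1,\dots,Y_d]]$ with $k$ infinite of characteristic $p$. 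Then $R$ is as in case~(iii) of Theorem~\ref{main-koszul}, so all Koszul homology modules $H_i(Y_1,\dots,Y_r;\M)$ are graded $F_{\ov R}$-finite.

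Next I would split off the torsion part in the $X$-direction. Put $R_+=(X_1,\dots,X_m)$ and $\mathbb{P}=\Proj R=\mathbb{P}^{m-1}_A$. For each $i$ the functor $N\mapsto H^i_{R_+}(\FF(N))$ is again a graded Lyubeznik functor, so each $H^i_{R_+}(\M)$ is a graded $F_R$-finite module, and it is $R_+$-torsion. The four-term exact sequence $0\to H^0_{R_+}(\M)\to\M\to D_{R_+}(\M)\to H^1_{R_+}(\M)\to 0$ (with $D_{R_+}$ the $R_+$-transform), together with $H^{i+1}_{R_+}(\M)_n\cong H^i(\mathbb{P},\widetilde\M(n))$ for $i\ge 1$ and $D_{R_+}(\M)_n\cong H^0(\mathbb{P},\widetilde\M(n))$, reduces the problem to: (i) the graded pieces of the $R_+$-torsion modules $H^j_{R_+}(\M)$, which govern $\M_n$ for $n\ll 0$ and, up to a finite correction, for $-m<n<0$; and (ii) $D_{R_+}(\M)_n$, which governs $\M_n$ for $n\gg 0$.

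For the range $n\gg 0$ I would show that $\bigoplus_{n\ge c}\M_n$ is a finitely generated graded $R$-module for $c\gg 0$: since $\M=\FF(R)$ is finitely generated over the ring of differential operators $D_k(R)$ and the operators $\partial_{X_i}^{[p^e]}$ lower the $X$-degree while the $X_i$ raise it by one, every positive-degree differential operator is divisible by some $X_i$, so $\M_n=\sum_i X_i\M_{n-1}$ for $n$ large. Combined with Theorem~\ref{vanish}, finite generation in high degrees gives that $\M_n\ne 0$ for one $n\ge 0$ forces $\M_n\ne 0$ for all $n\ge 0$; the precise endpoints ($-m$ in (a), $0$ in (b), the open band $(-m,0)$ in (c)) come from comparison with the extreme case $\M=R$, i.e.\ $H^m_{R_+}(R)$.

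The heart of the matter — and the step I expect to be the main obstacle — is establishing that for an $R_+$-torsion graded $F_R$-finite module $N$ the set $\{n:N_n\ne 0\}$ is a \emph{down-closed interval} (this is exactly what underlies (iv)$\Rightarrow$(iii)). Here Theorem~\ref{main-koszul} is the essential input: one runs a downward induction on $d=\dim A$, using that multiplication by $Y_1$ on $\M$ has degree $0$, so the inductive hypothesis over $\ov R=(A/Y_1)[X_1,\dots,X_m]$ can be read off degreewise from the graded $F_{\ov R}$-finite modules $H_0(Y_1;\M)$ and $H_1(Y_1;\M)$ of Theorem~\ref{main-koszul}; the base case $d=0$ is a graded $F$-finite $F$-module over $k[X_1,\dots,X_m]$ supported on $R_+$, for which the ``generalized Eulerian'' structure (the Euler operator acting on the $n$-th graded piece as multiplication by $n$ plus a Frobenius-nilpotent term) yields the explicit interval. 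The delicate point is to make this genuinely rigid rather than merely eventual — to upgrade ``$N_n=0$ on a cofinite subset of the interval'' to ``$N_n=0$ on the whole interval or nowhere on it'', uniformly in $n$; in characteristic zero this is controlled by a finite holonomic length, whereas here it must be extracted from the $F$-finiteness of the Koszul homology modules supplied by Theorem~\ref{main-koszul}. Assembling the torsion part, the transform part, and this rigidity statement yields precisely the equivalences (a), (b), (c).
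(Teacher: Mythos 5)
There is a genuine gap: the step you yourself flag as ``the heart of the matter'' --- upgrading eventual vanishing to rigidity, i.e.\ showing that non-vanishing of a single component $\M_c$ forces non-vanishing throughout the relevant range --- is exactly the content of the theorem, and your proposal does not supply an argument for it; it only says that the rigidity ``must be extracted from the $F$-finiteness of the Koszul homology modules.'' Moreover, one of your intermediate claims is false: $\bigoplus_{n\ge c}\M_n$ is \emph{not} in general a finitely generated graded $R$-module, since by Theorem~\ref{inf-gen} the individual components $\M_n$ need not be finitely generated over $A$. Finally, your proposed downward induction on $\dim A$ via $H_0(Y_1;\M)$ and $H_1(Y_1;\M)$ does not transfer non-vanishing of a fixed graded piece: if both Koszul homologies vanish in degree $n$ you only learn that $Y_1$ acts bijectively on $\M_n$ (Nakayama is unavailable for these non-finitely-generated components), so you cannot read off whether $\M_n=0$.

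The paper's route is different at both stages. For the base case $A=k$ (Theorems~\ref{tame-body}, \ref{rigid-body}) one does not pass to $\Proj$: one observes that $\N=\Gamma_{(X_1,\dots,X_m)}(\M)$ is a finite direct sum of copies of $E(k)(m)$ (Ma--Zhang), whose components are nonzero precisely for $n\le -m$; if $\N=0$ one chooses, using that $k$ is infinite and $\Ass\M$ is finite, a linear form $X_1$ regular on $\M$, and the resulting sequences $0\to\M_{n-1}\to\M_n\to\ov{\M}_n\to 0$ set up an induction on $m$ whose base case is Corollary~\ref{cor-graded} (the top Koszul homology is concentrated in degree $0$). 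For general $A$ the key device (\ref{referee}) is to localize at a \emph{minimal prime $P$ of the component $\M_c$}, complete, and enlarge the residue field, so that $\M_c$ becomes $E_B(K)^\alpha$ supported only at the maximal ideal of $B=K[[Y_1,\dots,Y_g]]$; then $V=H_g(Y_1,\dots,Y_g;\LL)$ is $F$-finite over $K[X_1,\dots,X_m]$ by Theorem~\ref{main-koszul}, satisfies $V\subseteq\LL$ and $V_c\cong K^\alpha\neq 0$ because $H_g(Y_1,\dots,Y_g;E_B(K))=K$, and the field case applied to $V$ propagates the non-vanishing back to $\M$. This choice of a minimal prime of the \emph{component} is what makes the top Koszul homology in the $Y$-direction faithful on the degree in question; without it the reduction to $K[X_1,\dots,X_m]$ does not go through.
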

We complement Theorem \ref{tame} by showing the following
\begin{example}\label{ex-tame} There exists a regular ring $A$ and homogeneous ideals $I, J, K, L$ in $R = A[X_1,\ldots, X_m]$ such that
\begin{enumerate}[\rm (i)]
\item
$H^i_I(R)_n \neq 0$ for all $n \leq -m$ and $H^i_I(R)_n = 0$ for all $n > - m$.
\item
$H^i_J(R)_n \neq 0$ for all $n \geq 0$ and $H^i_J(R)_n = 0$ for all $n < 0$.
\item
$H^i_K(R)_n \neq 0$ for all $n \in \ZZ$.
\item
$H^i_L(R)_n \neq 0$ for all $n \leq -m$, $H^i_L(R)_n \neq 0$ for all $n \geq 0$ and $H^i_L(R) = 0$ for all $n$ with $-m < n < 0$.
\end{enumerate}
\end{example}

In fact the examples in \cite[section 7]{P} is characteristic free (we needlessly assumed $A$ contains a field of characteristic zero).

\textbf{III} \textit{(Infinite generation:)} Recall that each component of $H^m_{R_+}(R)$ is a finitely generated $A$-module, cf., \cite[15.1.5]{BS}. We give a sufficient condition for infinite generation of a component of graded local cohomology module over $R$.
\begin{theorem}\label{inf-gen}(with hypotheses as in \ref{std}). Further assume $A$ is a domain. Let $I$ be a homogeneous ideal of $R$. Assume $I \cap A \neq 0$. If $H^i_I(R)_c \neq 0$ then
$H^i_I(R)_c$ is NOT finitely generated as an $A$-module.
\end{theorem}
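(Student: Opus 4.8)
\emph{Setup and overview.} Write $\M := H^i_I(R)$. By Lyubeznik's theory $\M$ is a graded $F_R$-module, so there is a degree-preserving isomorphism $\M \cong F_R(\M)$; moreover, since $\M$ is $I$-torsion and $0 \neq f \in I\cap A$, the module $\M$ is $f$-power torsion, so $\Supp_A(\M_c)\subseteq V(f)$. I will assume that $\M_c$ is finitely generated over $A$ and deduce $\M_c = 0$, contradicting the hypothesis $\M_c\neq 0$. The plan has three stages: (1) reduce to the case in which $A$ is a complete regular local ring of dimension $\geq 1$ and $\M_c$ has finite length; (2) describe each graded piece of $F_R(\M)$ in terms of the Frobenius functor $F_A$ over $A$; (3) conclude by a length computation.

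\emph{Stage (1): reduction.} Pick a prime $\mathfrak{p}$ minimal in $\Supp_A(\M_c)$; since $f\in\mathfrak{p}$ we get $\mathfrak{p}\neq 0$. Localising $A$ at $\mathfrak{p}$ and then completing are flat base changes over $A=R_0$, hence commute with $H^i_I(-)$ and with the formation of graded pieces, and they are faithfully flat; by Cohen's structure theorem the new base ring is $K[[Y_1,\dots,Y_d]]$ with $d=\height_A\mathfrak{p}\geq 1$. After this reduction $\M_c$ is a nonzero $A$-module of finite length (finite length because $\mathfrak{p}$ was minimal in $\Supp_A(\M_c)$), and we still have $\M\cong F_R(\M)$ as graded modules.

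\emph{Stages (2) and (3): the Frobenius formula and the length count.} For any graded $R$-module $N$ (with $\deg A=0$, $\deg X_j=1$) the functor $F_R$ multiplies $X$-degree by $p$, and a short computation in $F_R(N)=R\otimes_{\varphi_R}N$ using $X^{a}\otimes n=X^{a\bmod p}\otimes X^{\lfloor a/p\rfloor}n$ yields a natural isomorphism of $A$-modules
\[
F_R(N)_c\;\cong\;\bigoplus_{a\in\{0,\dots,p-1\}^m}F_A\!\bigl(N_{(c-|a|)/p}\bigr),
\]
the $a$-th summand being omitted unless $p\mid c-|a|$. Taking $N=\M$ and using $\M\cong F_R(\M)$ gives $\M_c\cong\bigoplus_a F_A(\M_{(c-|a|)/p})$, a finite direct sum all of whose indices lie in $[(c-(p-1)m)/p,\,c/p]$. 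By Kunz's theorem $\varphi_A$ is faithfully flat, so $F_A$ reflects nonvanishing and finiteness of length, and $\ell_A(F_A(M))=p^{d}\ell_A(M)$ for finite length $M$ (since $F_A$ is exact and $\ell_A(A/\m^{[p]})=p^{d}$). Hence, as $\M_c\neq 0$ has finite length, some $\M_{j_1}$ in the above interval is nonzero of finite length; iterating produces indices $c=j_0,j_1,j_2,\dots$ with $\M_{j_t}$ nonzero of finite length and $|j_{t+1}|\leq(|j_t|+(p-1)m)/p$, so $|j_t|\leq m$ for $t\gg 0$, and an elementary bookkeeping argument (the positive indices strictly decrease and no step falls below $-m$) yields some $c^{\ast}\in[-m,0]$ with $\M_{c^{\ast}}\neq 0$ of finite length. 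Choosing $a\in\{0,\dots,p-1\}^m$ with $|a|=(p-1)(-c^{\ast})\leq(p-1)m$ gives $(c^{\ast}-|a|)/p=c^{\ast}$, so $F_A(\M_{c^{\ast}})$ is a direct summand of $\M_{c^{\ast}}$ and therefore
\[
\ell_A(\M_{c^{\ast}})\;\geq\;\ell_A\!\bigl(F_A(\M_{c^{\ast}})\bigr)\;=\;p^{d}\,\ell_A(\M_{c^{\ast}})\;\geq\;2\,\ell_A(\M_{c^{\ast}}),
\]
forcing $\M_{c^{\ast}}=0$ — the desired contradiction.

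\emph{Main obstacle.} The step requiring the most care is the Frobenius formula in stage (2): one must fix the graded conventions so that the structural isomorphism $\M\cong F_R(\M)$ is degree-preserving, and one must verify that $F_R(N)_c$ is genuinely the asserted \emph{direct} sum of Frobenius twists over $A$ — this should be argued via exactness/flatness of $F_R$, since $R$ need not be module-finite over its subring of $p$-th powers when $K$ is imperfect. The combinatorial bookkeeping in stage (3) is routine. One could also attempt to run the argument by first proving, with the help of Theorem~\ref{main-koszul} and the theory of graded $F$-modules, that every $\M_c$ is itself an $F_A$-finite module; but the version sketched above uses only the $F_R$-module structure of $H^i_I(R)$.
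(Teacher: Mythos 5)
Your proof is correct, but it takes a genuinely different route from the paper's. The paper assumes $E=H^i_I(R)_c$ is finitely generated and nonzero, localizes at a prime $P$ in its support, and combines three facts: the Bass equality $\depth A_P=\injdim E_P$ for finitely generated modules of finite injective dimension, the inequality $\injdim \M_c\le\dim\M_c$ of Lemma \ref{injdim-dim} (which rests on the injectivity of $(H^j_P(\M_c))_P$, proved via the standard technique \ref{referee} and Lyubeznik's characteristic-free result on injective dimension of $D$-modules), and the Cohen--Macaulayness of the regular ring $A_P$; these force $\dim E_P=\dim A_P$, contradicting that $E_P$ is torsion with respect to the nonzero ideal $(I\cap A)_P$. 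Your argument instead uses only the graded $F_R$-module structure $\M\cong F_R(\M)$ (the structure morphism of an $F$-module is indeed an isomorphism, which is what you need), the freeness of $A[X_1,\dots,X_m]$ over $A[X_1^p,\dots,X_m^p]$ --- which is what makes your degreewise decomposition $F_R(N)_c\cong\bigoplus_aF_A(N_{(c-|a|)/p})$ a genuine direct sum, valid even when the residue field is imperfect --- together with Kunz's theorem; this yields a proof independent of the injective-dimension machinery of the paper and of Theorem \ref{main-koszul} altogether, at the cost of the Frobenius bookkeeping. Two minor points: (i) your closing index analysis is unnecessary, since $d\ge1$ already gives $\ell_A(\M_{j_{t+1}})\le\ell_A(\M_{j_t})/p^{d}<\ell_A(\M_{j_t})$, an infinite strictly decreasing sequence of positive integers; (ii) in the reduction, localization at $\mathfrak{p}$ is flat but not faithfully flat --- what you actually use is that $(\M_c)_{\mathfrak{p}}\ne0$ because $\mathfrak{p}\in\Supp_A\M_c$, and that the subsequent completion is faithfully flat.
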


\textbf{IV} (\textit{Bass numbers:}) The $j^{th}$ Bass number of an $A$-module $E$ with respect to a prime ideal $P$ is defined as $\mu_j(P,E) = \dim_{k(P)} \Ext^j_{A_P}(k(P), E_P)$ where $k(P)$ is the residue field of $A_P$. We note that if $E$ is finitely generated as an $A$-module  then $\mu_j(P,E)$ is a finite number (possibly zero) for all $j \geq 0$. In view of Theorem \ref{inf-gen} it is not clear whether $\mu_j(P, \FF(R))_n)$ is a finite number. Surprisingly  we have the following dichotomy:
\begin{theorem}
\label{bass-basic}(with hypotheses as in \ref{std}).  Let $P$ be a prime ideal in $A$. Fix $j \geq 0$. EXACTLY one of the following hold:
\begin{enumerate}[\rm(i)]
\item
$\mu_j(P, \M_n)$ is infinite for all $n \in \ZZ$.
\item
$\mu_j(P, \M_n)$ is finite for all $n \in \ZZ$. In this case EXACTLY one of the following holds:
\begin{enumerate}[\rm (a)]
\item
$\mu_j(P, \M_n) = 0$ for all $n \in \ZZ$.
\item
$\mu_j(P, \M_n) \neq 0$ for all $n \in \ZZ$.
\item
$\mu_j(P, \M_n) \neq 0$ for all $n  \geq 0$ and $\mu_j(P, \M_n) = 0$ for all
$n < 0$.
\item
$\mu_j(P, \M_n) \neq 0$ for all $n  \leq -m$ and $\mu_j(P, \M_n) = 0$ for all
$n > -m$.
\item
$\mu_j(P, \M_n) \neq 0$ for all $n  \leq -m$, $\mu_j(P, \M_n) \neq 0$ for all $n  \geq 0$  and $\mu_j(P, \M_n)  = 0$ for all $n$ with $-m < n < 0$.
\end{enumerate}
\end{enumerate}
\end{theorem}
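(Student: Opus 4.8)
The plan is to convert the Bass numbers of the $A$-modules $\M_n$ into Koszul homology, use Theorem~\ref{main-koszul} to move inside the category of graded $F$-finite modules over an ordinary graded polynomial ring, and there establish a finiteness dichotomy by induction on the number of variables. \emph{Reductions and translation.} Bass numbers are computed over $A_P$ and are unchanged by completion and by a faithfully flat local base change with field closed fibre. Graded Lyubeznik functors commute with localization and completion of $A$, and (when $k(P)$ is finite) with the flat extension $\widehat{A_P}\rt k'[[\,\cdot\,]]$ attached to an infinite residue‑field extension $k(P)\rt k'$; so we may assume $A=k[[Y_1,\ldots,Y_d]]$ with $k$ infinite and $P=\m=(Y_1,\ldots,Y_d)$. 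Set $\mu(n)=\mu_j(P,\M_n)$. The minimal $A$-free resolution of $k$ is the Koszul complex on $Y_1,\ldots,Y_d$, so $\mu(n)=\dim_k\Ext^j_A(k,\M_n)=\dim_kH^j(Y_1,\ldots,Y_d;\M_n)$, which Koszul self-duality identifies with $\dim_kH_{d-j}(Y_1,\ldots,Y_d;\M_n)$. Since the $Y_i$ have degree $0$ in $R=A[X_1,\ldots,X_m]$, the complex $K_\bullet(Y_1,\ldots,Y_d;\M)$ consists of graded $R$-modules with degree-$0$ differentials, so $N:=H_{d-j}(Y_1,\ldots,Y_d;\M)$ is graded with $N_n=H_{d-j}(Y_1,\ldots,Y_d;\M_n)$. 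By Theorem~\ref{main-koszul}(iii) with $r=d$, $N$ is a graded $F_{\ov R}$-finite module over $\ov R=R/(Y_1,\ldots,Y_d)=k[X_1,\ldots,X_m]=:S$. Hence $\mu(n)=\dim_kN_n$ for a graded $F_S$-finite module $N$ over the standard graded polynomial ring $S$ over the infinite field $k$.

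\emph{What must be proved about $N$.} It suffices to establish, for an arbitrary graded $F_S$-finite module $N$: (A) either $\dim_kN_n<\infty$ for every $n\in\ZZ$, or $\dim_kN_n=\infty$ for every $n\in\ZZ$; and (B) the set $\{n:N_n\ne 0\}$ is one of $\varnothing$, $\ZZ$, $\{n\ge 0\}$, $\{n\le -m\}$, $\{n\ge 0\}\cup\{n\le -m\}$. Statement (B) is the extension of Theorems~\ref{vanish} and~\ref{tame} from the functorial module $\FF(S)$ to an arbitrary graded $F_S$-finite module; the arguments proving those theorems pass through Theorem~\ref{main-koszul} and never use the special form $\FF(S)$, so they apply verbatim. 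Granting (A) and (B): if $\dim_kN_n=\infty$ for all $n$ then $\mu(n)=\infty$ for all $n$, which is alternative~(i) (its occurrence being consistent with Theorem~\ref{inf-gen}); otherwise $\dim_kN_n<\infty$ for all $n$, so $\mu(n)<\infty$ for all $n$, and since $\{n:\mu(n)\ne 0\}=\{n:N_n\ne 0\}$ is one of the five sets in (B) we land in exactly one of (ii)(a)--(e).

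\emph{Proof of the dichotomy (A).} The module $H^0_{S_+}(N)$ is again $F_S$-finite (local cohomology preserves $F$-finiteness), is $S_+$-torsion, and is therefore a finite direct sum of degree shifts of $E:=H^m_{S_+}(S)$, a shift of the graded injective hull of $k$; in particular it has finite-dimensional components, and being injective it splits off, giving $N\cong H^0_{S_+}(N)\oplus N'$ as graded $S$-modules with $N'$ an $S_+$-torsion-free graded $F_S$-finite module. So it is enough to prove (A) for $N'$. Since $\Ass N'$ is finite and does not contain $S_+$, prime avoidance (here we use that $k$ is infinite) provides a linear form $\ell\in S_1$ that is a nonzerodivisor on $N'$; after a linear change of coordinates making $\ell$ a variable, Theorem~\ref{main-koszul}(i) applied to the single element $\ell$ shows $N'/\ell N'=H_0(\ell;N')$ is a graded $F_{S/\ell S}$-finite module over $S/\ell S\cong k[X_1,\ldots,X_{m-1}]$. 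Induct on $m$, the case $m=1$ being immediate since graded $F_{k[X_1]}$-finite modules have finite-dimensional components. If by induction $\dim_k(N'/\ell N')_n=\infty$ for all $n$, then $\dim_kN'_n\ge\dim_k(N'/\ell N')_n=\infty$ for all $n$ (as $(N'/\ell N')_n$ is a quotient of $N'_n$), so (A) holds. If instead $\dim_k(N'/\ell N')_n<\infty$ for all $n$, note $\dim_kN'_n$ is non-decreasing in $n$ (as $\ell$ is a nonzerodivisor); were it finite for $n\le c$ and infinite for $n>c$ with $c\in\ZZ$, the exact sequence
\[
0\rt N'_c\xrightarrow{\ \ell\ }N'_{c+1}\rt (N'/\ell N')_{c+1}\rt 0
\]
would force $\dim_kN'_{c+1}=\dim_kN'_c+\dim_k(N'/\ell N')_{c+1}<\infty$, a contradiction; hence $\dim_kN'_n<\infty$ for all $n$. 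This proves (A), and with (B) the theorem.

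\emph{Where the difficulty lies.} The non-formal inputs are Theorem~\ref{main-koszul} (used with $r=d$ for the translation step and with $r=1$ for a general linear form in the dichotomy argument — the coordinate change being needed precisely because the theorem kills variables, not arbitrary linear forms), the classification of $S_+$-torsion $F_S$-finite modules as finite sums of shifts of $E$, and the upgrade of Theorems~\ref{vanish}--\ref{tame} to all graded $F_S$-finite modules. I expect that last verification — confirming that the proofs of~\ref{vanish} and~\ref{tame} are genuinely insensitive to the functorial hypothesis — together with the coordinate-change bookkeeping in Step~3 to be the only points requiring real care; once those are in place the dimension count is routine.
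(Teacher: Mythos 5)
Your proposal is correct in outline but reaches the key identity $\mu_j(P,\M_n)=\dim_K N_n$ by a genuinely different route than the paper. The paper first invokes Lyubeznik's lemma $\mu_j(P,L)=\mu_0(P,H^j_P(L))$ (Lemma \ref{lyu-lemma}), verifies its injectivity hypothesis via Proposition \ref{lyu-lemma-hypoth}, applies the standard technique \ref{referee} to the composite functor $H^j_{PR}\circ\FF$, identifies $\N_n=E_B(K)^{\alpha_n}$ with $\alpha_n=\mu_j(P,\M_n)$, and then uses only the \emph{top} Koszul homology $H_g(Y_1,\ldots,Y_g;\N)\cong\Hom(B/\m_B,\N)$ to land in a graded $F_{K[X_1,\ldots,X_m]}$-finite module with components $K^{\alpha_n}$. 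You instead perform the same reduction to $A=K'[[Y_1,\ldots,Y_g]]$ and then compute $\mu_j$ directly as $\dim H_{g-j}(Y_1,\ldots,Y_g;\M_n)$ via the Koszul resolution of the residue field and Koszul self-duality, so that Theorem \ref{main-koszul}(iii) (with $r=g$ and the intermediate index $i=g-j$, in its graded form \ref{grading}) does all the work in one stroke. This bypasses Lemma \ref{lyu-lemma} and the injectivity verification entirely and uses the full strength of the main theorem rather than just its top-degree case; the endgame is the same in both treatments, namely Theorems \ref{vanishing-std}, \ref{tame-body}, \ref{rigid-body} and \ref{len-body}. Note that your worry about "upgrading" the vanishing/tameness/rigidity statements is unnecessary: in the body of the paper those results are already stated and proved for arbitrary graded $F$-finite modules over $k[X_1,\ldots,X_m]$, not just for $\FF(R)$, which is exactly what your $N$ requires.

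Two points need attention. First, the base case $m=1$ of your dichotomy (A) is not "immediate": the unconditional statement that every graded $F_{k[X_1]}$-finite module has finite-dimensional components is Theorem \ref{len-m-1} of the paper, proved by a genuinely nontrivial argument (finite $\D$-module length, Noetherianity of $\M_0$ over $\D_0$, and the Eulerian property). Fortunately you only need the \emph{conditional} dichotomy there, and that follows from the same exact-sequence bookkeeping you use in the inductive step, since $N'/\ell N'$ is $F_k$-finite, hence finite-dimensional and concentrated in degree $0$ by Corollaries \ref{cor-koszul} and \ref{cor-graded}; you should argue the base case that way rather than citing the strong finiteness. Second, two small matters: the case $\height P=0$ falls outside Theorem \ref{main-koszul} (which requires $r\geq 1$) and must be disposed of separately (trivially, since then $N=\M$ itself); and the splitting $N\cong\Gamma_{S_+}(N)\oplus N'$ need not be a splitting of $F$-modules, so one should work with the quotient $N/\Gamma_{S_+}(N)$, which is $F_S$-finite, rather than with a complement --- the dimension count is unaffected.
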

There easy examples where (i) and (ii) hold, see \cite[section 7]{P}.  The only examples where the author was able to show (i) hold had $m \geq 2$.  Surprisingly  the following result holds.
\begin{theorem}
\label{bass-m-one}(with hypotheses as in \ref{std}).  Assume $m = 1$. Let $P$ be a prime ideal in $A$.  Then
$\mu_j(P, \M_n)$ is finite for all $n \in \ZZ$.
\end{theorem}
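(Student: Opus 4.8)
The plan is to reduce, by flat base change, to the case where $A = k[[Y_1,\dots,Y_d]]$ with $k$ infinite and $P$ its maximal ideal, then to show that (for $m=1$) the graded piece $\M_n$ is, for $n \gg 0$, isomorphic to a Lyubeznik module over $A$ --- which has finite Bass numbers (see \cite{Lyu-2}) --- and finally to propagate this to all $n$ via the dichotomy of Theorem \ref{bass-basic}. For the reduction, note that $\mu_j(P,\M_n) = \dim_{k(P)} \Ext^j_{A_P}(k(P), (\M_n)_P)$ is computed with a finitely presented first argument, so it is unaffected by localizing $A$ at $P$, by $PA_P$-adic completion, and by residue field extension --- all flat operations; moreover local cohomology, kernels and cokernels, hence Lyubeznik functors, commute with flat base change, so the hypotheses of \ref{std} persist. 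Thus we may assume $A = k[[Y_1,\dots,Y_d]]$, $k$ infinite of characteristic $p$, $P = (Y_1,\dots,Y_d)$, whence $R = A[X]$ (with $X = X_1$ and $m = 1$) is a ring of type (iii) in Theorem \ref{main-koszul}.

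Next I would invert $X$. Set $R_X = A[X,X^{-1}]$, a regular ring. Since $X$ is a unit in $R_X$, the degree-zero functor $L \mapsto L_0$ is an equivalence from $\Z$-graded $R_X$-modules to $A$-modules (quasi-inverse $N \mapsto N \otimes_A R_X$), every homogeneous ideal of $R_X$ is extended from $A$, and the Frobenius relation --- absorbing $X^{-pb} = (X^{-b})^{p}$ --- yields a natural isomorphism $(F_{R_X} L)_0 \cong F_A(L_0)$. Hence this equivalence carries the graded Lyubeznik functor $\FF_X$ obtained by localizing $\FF$ to an (ungraded) Lyubeznik functor $\mathcal{G}$ on $\mathrm{Mod}(A)$, compatibly with Frobenius. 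Since $\M_X := \M \otimes_R R_X = \FF_X(R_X)$, we get $(\M_X)_0 = \mathcal{G}(A)$; in particular $(\M_X)_0$ is an $F_A$-finite module over the regular ring $A$, so $\mu_j(P, (\M_X)_0)$ is finite for all $j$.

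Then I would identify $\M_n$ for $n \gg 0$ and conclude. By Theorem \ref{main-koszul} applied with $X$ in the role of a quotiented variable (the argument is indifferent to which variable of $R$ one uses), the Koszul homologies $\M/X\M = H_0(X;\M)$ and $(0:_\M X) = H_1(X;\M)$ are $F_A$-finite graded $A$-modules; since $A$ lies in degree $0$ they have finitely generated graded roots and so are concentrated in finitely many degrees. Hence $X \colon \M_n \rt \M_{n+1}$ is an isomorphism for $n \gg 0$, so the localization map $\M_n \rt (\M_X)_n$ is an isomorphism for $n \gg 0$ and $(\M_X)_n \cong (\M_X)_0$ via the unit $X^n$; thus $\M_n \cong (\M_X)_0$ and $\mu_j(P, \M_n)$ is finite for all $j$ and all $n \gg 0$. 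Finally, fix $j$: because $\mu_j(P, \M_n)$ is finite for some $n$, alternative (i) of Theorem \ref{bass-basic} is impossible, so alternative (ii) holds and $\mu_j(P, \M_n)$ is finite for every $n \in \Z$; as $j$ was arbitrary, this proves the result.

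The hard part will be the identification $\M_n \cong (\M_X)_0$ for $n \gg 0$: it rests on Theorem \ref{main-koszul} being available for the variable $X$, whereas in type (iii) it is stated only for the power-series variables $Y_i$. One should either record the evident variant of Theorem \ref{main-koszul}, or instead invoke the structural fact (implicit in the proof of Theorem \ref{tame}) that a graded $F_R$-finite module supported on $V(X)$ is concentrated in degrees $\le -1$ --- which forces $H^0_{(X)}(\M)_n = H^1_{(X)}(\M)_n = 0$ for $n \ge 0$ and hence $\M_n \cong (\M_X)_0$ already for all $n \ge 0$. The remaining checks --- that $L \mapsto L_0$ intertwines $F_{R_X}$ with $F_A$ and preserves finitely generated roots, and that all the reductions respect the Lyubeznik-functor structure --- are routine.
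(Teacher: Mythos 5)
Your proof is correct in substance but follows a genuinely different route from the paper. The paper's proof of Theorem \ref{bass-m-one} runs the standard technique of \ref{referee} exactly as in Theorem \ref{bass-basic}, reducing to a graded $F_{K[X]}$-finite module $V$ with $\dim_K V_n = \mu_j(P,\M_n)$, and then invokes Theorem \ref{len-m-1} --- whose proof is a $\mathcal{D}$-module argument (finite length of $\M$ over $\mathcal{D}$, Noetherianity of $\M_0$ over $\mathcal{D}_0$, and the Eulerian operators $X^i\partial_{[i]}$) showing that \emph{every} graded component of a graded $F_{k[X]}$-finite module is finite dimensional, with no a priori finiteness assumed anywhere. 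You instead invert $X$, use the degree-zero equivalence between graded $R_X$-modules and $A$-modules to recognize $(\M_X)_0$ as $\mathcal{G}(A)$ for a Lyubeznik functor $\mathcal{G}$ on $\operatorname{Mod}(A)$, quote Lyubeznik's (Huneke--Sharp type) finiteness of Bass numbers of $F_A$-finite modules, identify $\M_n\cong(\M_X)_0$ for $n\ge 0$, and finish with the dichotomy of Theorem \ref{bass-basic}. This is a clean alternative: it isolates the structural reason $m=1$ is special (inverting the unique positive-degree variable trivializes the grading, so the components become honest $F_A$-finite modules rather than mere graded pieces of one), it bypasses the Eulerian computation entirely, and as a byproduct it reproves Theorem \ref{len-m-1}. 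The cost is the routine but nontrivial bookkeeping you flag yourself: the compatibility of the degree-zero equivalence with Frobenius and with Lyubeznik functors, and the identification $\M_n\cong(\M_X)_0$. Two small cautions on the latter. First, the inference ``finitely generated graded root $\Rightarrow$ concentrated in finitely many degrees'' is not valid as written --- under $F$ the degrees of the root scale by $p$, so a root living in $[a,b]$ with $a<0$ or $b>0$ would spread out; what is true (and stronger) is that a degree-preserving injective root morphism into $F_A(M)$ forces concentration in degree $0$, since $F_A(M)_n=0$ for $p\nmid n$ and $F_A(M)_{pk}=F_A(M_k)$. Second, the $F_A$-structure on $(0:_\M X)$ carries the $\omega^{p-1}$ twist of Remark \ref{lyu-w}, so ``degree-preserving'' holds only up to a shift; your fallback via the fact that the $R_+$-torsion modules $H^0_{(X)}(\M)$ and $H^1_{(X)}(\M)$ vanish in degrees $\ge 0$ (Lemma \ref{criterion}(a)$\Rightarrow$(b), which does not depend on this theorem) avoids both issues and is the cleaner way to close the argument.
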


\textbf{V} (\textit{ Growth of Bass numbers}). Fix $j \geq 0$. Let $P$ be a prime ideal in $A$ such that $\mu_j(P, \FF(R)_n)$ is finite for all $n \in \ZZ$. We may ask about the growth of the function $n \mapsto \mu_j(P, \FF(R)_n)$ as $n \rt -\infty$ and when $n \rt + \infty$. We prove
\begin{theorem}
\label{bass-growth}(with hypotheses as in \ref{std}).  Let $P$ be a prime ideal in $A$. Let $j \geq 0$. Suppose $\mu_j(P, \M_n)$ is finite for all $n \in \ZZ$. Then there exists polynomials $f_\M^{j,P}(Z), g_\M^{j,P}(Z) \in \mathbb{Q}[Z]$ of degree $\leq m - 1$ such that
\[
f_\M^{j,P}(n) = \mu_j(P, \M_n) \ \text{for all} \ n \ll 0  \quad \text{AND} \quad  g_\M^{j,P}(n) = \mu_j(P, \M_n) \ \text{for all} \ n \gg 0.
\]
\end{theorem}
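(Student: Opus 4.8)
The plan is to imitate, in characteristic $p$, the argument of \cite{P}: reduce the statement to a purely combinatorial fact about Hilbert functions of graded $F$-finite modules over a polynomial ring, with holonomic $D$-modules in \cite{P} replaced here by $F_{\ov R}$-finite modules via Theorem \ref{main-koszul}.

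First I would make the standard reductions. The numbers $\mu_j(P,\M_n)$, and the hypothesis of their finiteness, are unchanged under the flat local base changes $A\rt A_P$, then (when $k(P)$ is finite) $A_P\rt A_P[t]_{\m A_P[t]}$, and then completion; under each of these $\FF$ induces a graded Lyubeznik functor on the new ring and the $X_i$-grading is preserved. So I may assume $A$ is a complete regular local ring with maximal ideal $P=\m$ and \emph{infinite} residue field $K$; then $A=K[[Y_1,\ldots,Y_e]]$ with $e=\dim A$ by Cohen's theorem, $R=A[X_1,\ldots,X_m]$ is of type (iii) in Theorem \ref{main-koszul}, and $\M=\FF(R)$ is a graded $F_R$-finite module. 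Now I would convert Bass numbers into Koszul homology: since $Y_1,\ldots,Y_e$ is a regular system of parameters of $A$, the Koszul complex $K_\bullet(Y_1,\ldots,Y_e;A)$ resolves $K=A/\m$, and self-duality of the Koszul complex gives $\Ext^j_A(K,N)\cong H_{e-j}(Y_1,\ldots,Y_e;N)$ for every $A$-module $N$; applying this to $N=\M_n$ and using that the $Y_i$ have degree $0$ in $R$, I get $\mu_j(P,\M_n)=\dim_K H_{e-j}(Y_1,\ldots,Y_e;\M)_n$. Setting $\N:=H_{e-j}(Y_1,\ldots,Y_e;\M)$, a graded module over $\ov R:=R/(Y_1,\ldots,Y_e)=K[X_1,\ldots,X_m]$, Theorem \ref{main-koszul} makes $\N$ a graded $F_{\ov R}$-finite module, and the theorem's hypothesis is exactly that $\dim_K\N_n<\infty$ for all $n$. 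Thus it suffices to prove: \emph{a graded $F_{\ov R}$-finite module $\N$ over a standard graded polynomial ring $\ov R=K[X_1,\ldots,X_m]$ with all $\dim_K\N_n$ finite has $n\mapsto\dim_K\N_n$ equal to a polynomial of degree $\le m-1$ for $n\ge 0$ and to another such polynomial for $n<0$.}

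To prove this I would use the structure theory of $F$-finite modules: $\N$ has a finitely generated graded submodule $\N_0$ with injective generating morphism $\N_0\hookrightarrow F(\N_0)$ and $\N=\bigcup_{i\ge 0}F^i(\N_0)$ an increasing union, so for each fixed $n$ the integers $\dim_K F^i(\N_0)_n$ increase with $i$ to the finite number $\dim_K\N_n$ and hence stabilize. As $\ov R$ is regular the Frobenius functor is exact (Peskine--Szpiro), so iterating it on a graded finite free resolution of $\N_0$ shows $F^i(\N_0)$ has Hilbert series $Q(t^{p^i})/(1-t)^m$, where $Q(t)=\sum_k c_kt^k\in\ZZ[t,t^{-1}]$ is defined by $H_{\N_0}(t)=Q(t)/(1-t)^m$. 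Extracting the coefficient of $t^n$, with $(1-t)^{-m}$ expanded as a power series, gives $\dim_K F^i(\N_0)_n=\sum_{k\,:\,kp^i\le n}c_k\binom{n-kp^i+m-1}{m-1}$. Fix $n\ge 0$; for every $i$ with $p^i>n$ the index set is exactly $\{k\le 0\}$, so $\dim_K F^i(\N_0)_n=\Phi_n(p^i)$ where $\Phi_n(x):=\sum_{k\le 0}c_k\binom{n-kx+m-1}{m-1}$ is, for fixed $n$, a polynomial in $x$ of degree $\le m-1$. Being bounded by $\dim_K\N_n$ on the infinite set $\{p^i:p^i>n\}$, it is constant, so $\dim_K\N_n=\Phi_n(0)=\big(\sum_{k\le 0}c_k\big)\binom{n+m-1}{m-1}$; the same argument for $n<0$ (with $i$ such that $p^i>-n$, which isolates the terms with $k\le -1$) gives $\dim_K\N_n=\big(\sum_{k<0}c_k\big)\binom{n+m-1}{m-1}$. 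These are polynomials in $n$ of degree $\le m-1$, and combined with the identification of $\mu_j(P,\M_n)$ they yield $g_\M^{j,P}(Z)=\big(\sum_{k\le 0}c_k\big)\binom{Z+m-1}{m-1}$ and $f_\M^{j,P}(Z)=\big(\sum_{k<0}c_k\big)\binom{Z+m-1}{m-1}$.

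I expect the substantive inputs to be two: the graded form of Theorem \ref{main-koszul}, i.e.\ that the Koszul homology is a \emph{graded} $F_{\ov R}$-finite module (one must check the construction proving \ref{main-koszul} is degree-preserving), and the structure theory of graded $F$-finite modules, namely the existence of a graded minimal root with injective generating morphism exhibiting $\N$ as the increasing union of its Frobenius iterates. If the graded refinement of \ref{main-koszul} is not already available, supplying it is the main obstacle; once both facts are granted, the rest is the elementary Hilbert-series bookkeeping above, whose only idea is that finiteness of all the graded Bass numbers forces $\Phi_n(x)$ to be constant, which is what pins $\dim_K\N_n$ down to a polynomial of degree at most $m-1$ in each asymptotic direction.
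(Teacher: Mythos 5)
Your proposal is correct, but it takes a genuinely different route from the paper at both stages of the argument. For the reduction to a polynomial ring over a field, the paper does not use Koszul self-duality over $A$: it invokes Lyubeznik's lemma $\mu_j(P,L)=\mu_0(P,H^j_P(L))$ (Lemma \ref{lyu-lemma}, with its injectivity hypothesis verified in Proposition \ref{lyu-lemma-hypoth}), applies the ``standard technique'' of \ref{referee} to the graded Lyubeznik functor $H^j_{PR}\circ\FF$, and then takes only the \emph{top} Koszul homology $H_g(Y_1,\ldots,Y_g;-)$ of a module supported at the maximal ideal of $\widehat{A_P}$ --- so it needs only Theorem \ref{top}; your identification $\mu_j(P,\M_n)=\dim_K H_{e-j}(Y_1,\ldots,Y_e;\M)_n$ instead needs the graded form of the full Theorem \ref{main-koszul} in every homological degree (which the paper does supply as Remark \ref{grading}), but in exchange you bypass Lyubeznik's lemma and the injectivity check entirely. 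For the field case, the paper's Theorem \ref{growth-body} is proved by induction on $m$: split off $\Gamma_{(X_1,\ldots,X_m)}(\M)=E(k)(m)^s$, choose an $\M$-regular linear form after a homogeneous change of variables, and use the sequence $0\to\M_{n-1}\to\M_n\to\ov{\M}_n\to 0$ together with \cite[4.1.2]{BH}, the base case $m=1$ coming from Corollary \ref{cor-graded}. Your Hilbert-series argument from a graded root is different and is correct (it checks out on $E(k)(m)$, on $\ov{R}$ itself, and on $\ov{R}_{X_1}$ for $m=1$); it in fact proves more than is asserted, namely that $\dim_K\N_n$ equals $\bigl(\sum_{k\le 0}c_k\bigr)\binom{n+m-1}{m-1}$ for \emph{all} $n\ge 0$ and $\bigl(\sum_{k<0}c_k\bigr)\binom{n+m-1}{m-1}$ for \emph{all} $n<0$, not merely for $|n|\gg 0$. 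The two inputs you flag --- the graded version of Theorem \ref{main-koszul} and the existence of a graded root with injective (hence, by exactness of $F$, term-by-term injective) generating system --- are both available, the first as Remark \ref{grading} and the second by running Lyubeznik's root construction in the category of graded $F$-modules as in \cite{Z}, so there is no gap.
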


\textbf{VI}
(\textit{Associate primes:}) If $E = \bigoplus_{n \in \ZZ} E_n$ is a graded $R$-module then  there are two questions regarding asymptotic primes:

\textit{Question 1:}\textit{(Finiteness:)} Is the set
$\bigcup_{n \in \ZZ} \Ass_A E_n   $ finite?

\textit{Question 2:} \textit{(Stability:)} Does there exists integers $r , s$ such
that $\Ass_A E_n = \Ass_A E_r$ for all $n \leq r$ and $\Ass_A E_n = \Ass_A E_s$ for all $n \geq s$.

For graded local cohomology modules we show that both Questions above have affirmative answer for all  regular rings $A$. Note in characteristic zero we proved a more restrictive result. \cite[1.13]{P}.
\begin{theorem}\label{ass}(with hypotheses as in \ref{std}). Let $\M = \FF(R) = \bigoplus_{n \in \ZZ}M_n$. Then
\begin{enumerate}[\rm (1)]
\item
$\bigcup_{n \in \ZZ} \Ass_A \M_n   $ is a finite set.
\item
$\Ass_A \M_n = \Ass_A \M_m$ for all $n \leq -m$.
\item
$\Ass_A \M_n = \Ass_A \M_0$ for all $n \geq 0$.
\end{enumerate}
\end{theorem}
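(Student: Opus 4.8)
The plan is to separate the finiteness assertion (1) from the stability assertions (2) and (3). For (1), recall that $\M=\FF(R)$ is an $F_R$-finite module, so by Lyubeznik's theory \cite{Lyu-2} the set $\Ass_R\M$ is finite, say $\Ass_R\M=\{Q_1,\ldots,Q_t\}$. Suppose $P\in\Ass_A\M_n$ and pick a homogeneous $x\in\M_n$ with $\ann_A(x)=P$. Then $J:=\ann_R(x)$ is a homogeneous ideal with $J\cap A=P$, and $Rx\cong R/J$ is a graded submodule of $\M$, so $\operatorname{Min}_R(J)\subseteq\Ass_R(R/J)=\Ass_R(Rx)\subseteq\Ass_R\M$. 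Since $P$ is prime and $P=\sqrt{J\cap A}=(\sqrt J)\cap A=\bigcap_{Q\in\operatorname{Min}_R(J)}(Q\cap A)$, we get $P=Q\cap A$ for some $Q\in\operatorname{Min}_R(J)$, hence $P\in\{Q_1\cap A,\ldots,Q_t\cap A\}$. Thus $\bigcup_{n\in\ZZ}\Ass_A\M_n$ is contained in the finite set $\{Q_i\cap A:1\le i\le t\}$, which is (1); incidentally this also shows each $\Ass_A\M_n$ is finite.

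For (2) and (3) I would fix a prime $P$ of $A$ and express the condition ``$P\in\Ass_A\M_n$'' as the nonvanishing of a single graded component of a graded $F$-finite module over a polynomial ring over a field. Let $h=\height P$. Localize at $P$ and complete; since $A$ is equicharacteristic, Cohen's theorem gives $\wh{A_P}\cong k(P)[[z_1,\ldots,z_h]]$, and after replacing $A_P$ by a faithfully flat local extension we may assume in addition that the residue field is an infinite field $\kappa$, so $\wh{A_P}=\kappa[[z_1,\ldots,z_h]]$. Put $S=\wh{A_P}[X_1,\ldots,X_m]=R\otimes_A\wh{A_P}$, a regular ring of type (iii) in Theorem \ref{main-koszul}. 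Because local cohomology, hence every Lyubeznik functor, commutes with the flat base change $R\to S$, we have $\M\otimes_R S\cong\FF'(S)$ for a graded Lyubeznik functor $\FF'$ over $S$; this is a graded $F_S$-finite module whose $n$-th component is $\M_n\otimes_A\wh{A_P}$. Since $k(P)$ is finitely presented over $A_P$ and $\wh{A_P}$ is faithfully flat over $A_P$,
\[
P\in\Ass_A\M_n\iff\Hom_{A_P}(k(P),(\M_n)_P)\neq 0\iff\Hom_{\wh{A_P}}(\kappa,\M_n\otimes_A\wh{A_P})\neq 0,
\]
and the rightmost module is the top Koszul homology $H_h(z_1,\ldots,z_h;\M_n\otimes_A\wh{A_P})=H_h(z_1,\ldots,z_h;\FF'(S))_n$, the $z_i$ being elements of degree zero in $S$.

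Now apply Theorem \ref{main-koszul} (case (iii), with $r=h$): the module $H:=H_h(z_1,\ldots,z_h;\FF'(S))$ is a graded $F_{\ov S}$-finite module over $\ov S=S/(z_1,\ldots,z_h)=\kappa[X_1,\ldots,X_m]$, a standard graded polynomial ring over an infinite field of characteristic $p$, and by the previous paragraph $P\in\Ass_A\M_n$ if and only if $H_n\neq 0$. (If $h=0$, i.e.\ $A_P$ is a field, no Koszul homology is needed and one takes $H=\M\otimes_A\kappa$.) Feeding $H$ into Theorem \ref{tame} --- in the form valid for an arbitrary graded $F$-finite module, which is what its proof yields --- part (a) gives that $\{n\le -m:H_n\neq 0\}$ is either empty or all of $\{n\in\ZZ:n\le -m\}$, and part (b) gives that $\{n\ge 0:H_n\neq 0\}$ is either empty or all of $\{n\in\ZZ:n\ge 0\}$. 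Hence, for each fixed $P$, membership of $P$ in $\Ass_A\M_n$ is constant for $n\le -m$ and constant for $n\ge 0$; since this holds for every prime $P$, we obtain (2) and (3).

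The main obstacle is the reduction in the second paragraph: one must check carefully that localization at $P$, completion, and the residue-field enlargement each preserve the property of being the image of a graded Lyubeznik functor on a regular ring of the type to which Theorem \ref{main-koszul} applies, and that graded components and the relevant $\Hom$ modules behave correctly under these flat base changes. Once ``$P\in\Ass_A\M_n$'' has been converted into the (non)vanishing of $H_n$ for the concrete graded $F$-finite module $H=H_h(z_1,\ldots,z_h;\FF'(S))$ over $\kappa[X_1,\ldots,X_m]$, the rigidity furnished by Theorem \ref{tame} completes the argument. A minor point worth flagging is that Theorems \ref{vanish} and \ref{tame} are used here for a general graded $F$-finite module rather than for $\FF(R)$ itself; this is harmless since their proofs are carried out at that generality.
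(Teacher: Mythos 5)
Your argument is correct and is in substance the paper's own: part (1) is a hands-on re-derivation of Proposition \ref{M-ex} combined with the finiteness of $\Ass_R\FF(R)$, and parts (2)--(3) reproduce, for the special case $j=0$, exactly the localize--complete--enlarge-residue-field--top-Koszul-homology--rigidity pipeline (the ``standard technique'' \ref{referee} together with Theorem \ref{main-koszul} and Theorem \ref{tame-body}) that the paper packages once and for all as Theorem \ref{bass-basic}. The paper's written proof simply cites \ref{M-ex} and \ref{bass-basic} (via $P\in\Ass_A V\iff\mu_0(P,V)>0$), so the only difference is that you inline the machinery instead of quoting it.
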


\textbf{VII}
(\textit{Dimension of Supports  and injective dimension:}) Let $E$ be an $A$-module. Let
$\injdim_A E$ denotes the injective dimension of $E$. Also
$\Supp_A E = \{ P \mid  E_P \neq 0 \ \text{and $P$ is a prime in $A$}\}$ is the support of an $A$-module $E$.
By $\dim_A E $ we mean the dimension of $\Supp_A E$ as a subspace of $\Spec(A)$.
We prove the following:
\begin{theorem}\label{injdim-and-dim}(with hypotheses as in \ref{std}). Let $\M = \FF(R) = \bigoplus_{n \in \ZZ}\M_n$. Then we have
\begin{enumerate}[\rm (1)]
\item
$\injdim \M_c \leq \dim \M_c$ for all $c \in \ZZ$.
\item
$\injdim \M_n = \injdim \M_{-m}$ for all $n \leq -m$.
\item
$\dim \M_n = \dim \M_{-m}$ for all $n \leq -m$.
\item
$\injdim \M_n = \injdim \M_{0}$ for all $n \geq 0$.
\item
$\dim \M_n = \dim \M_{0}$ for all $n \geq 0$.
 \item
 If $m \geq 2$ and $-m < r,s < 0$ then
 \begin{enumerate}[\rm (a)]
 \item
 $\injdim \M_r = \injdim \M_{s}$  and $\dim M_r = \dim M_{s}$.
 \item
 $\injdim \M_r \leq \min \{ \injdim \M_{-m}, \injdim \M_{0} \}$.
 \item
 $\dim \M_r \leq \min \{ \dim \M_{-m}, \dim \M_{0} \}$.
 \end{enumerate}
\end{enumerate}
\end{theorem}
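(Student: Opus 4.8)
The plan is to derive parts (2)--(6) formally from Theorems \ref{ass} and \ref{bass-basic}, and to obtain part (1) --- the positive-characteristic analogue of Lyubeznik's inequality $\injdim_A N\le\dim_A N$ for $F_A$-finite $N$ --- by a separate argument, which I expect to be the real work. I will use two elementary facts about a module $E$ over a Noetherian ring $A$: that $\Supp_A E=\bigcup_{Q\in\Ass_A E}V(Q)$, so $\dim_A E=\max\{\dim A/Q:Q\in\Ass_A E\}$; and that $\injdim_A E=\sup\{j:\mu_j(P,E)\ne 0\text{ for some }P\}$, which for local $A$ equals $\sup\{j:\mu_j(\mathfrak m,E)\ne 0\}$ by Bass's theorem. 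Since $\injdim_A\M_n$ and $\dim_A\M_n$ are the suprema of $\injdim_{A_{\mathfrak m}}(\M_n)_{\mathfrak m}$ and $\dim_{A_{\mathfrak m}}(\M_n)_{\mathfrak m}$ over maximal ideals $\mathfrak m$, and since localizing $A$ at a prime replaces $\FF$ by a graded Lyubeznik functor over $A_{\mathfrak m}[X_1,\dots,X_m]$ and commutes with taking graded components, I first reduce every assertion to the case $A$ local. Then (3) and (5) are immediate: $\dim_A\M_n$ depends only on $\Ass_A\M_n$, and by Theorem \ref{ass}(2),(3) this set is constant for $n\le -m$ and constant for $n\ge 0$.

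For (2), (4) and (6) I read the conclusions off the trichotomy of Theorem \ref{bass-basic}. Since $\injdim\M_n=\sup\{j:\mu_j(P,\M_n)\ne 0\text{ for some }P\}$, it suffices to observe that for each fixed $(j,P)$ the truth value of ``$\mu_j(P,\M_n)\ne 0$'' is constant on the range $n\le -m$ and on the range $n\ge 0$ --- which one checks case by case against the list in \ref{bass-basic} --- and then to take the supremum over $(j,P)$; this gives (2) and (4). For (6), with $m\ge 2$, one notes that for $-m<n<0$ the condition $\mu_j(P,\M_n)\ne 0$ holds exactly when $(j,P)$ lies in possibility (i) or (ii)(b) of \ref{bass-basic}, and in both of those $\mu_j(P,\M_n)\ne 0$ for every $n\in\ZZ$. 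Hence the set of such $j$ is independent of $r$ for $-m<r<0$, yielding $\injdim\M_r=\injdim\M_s$ and, on taking $j=0$ together with $Q\in\Ass_A E\iff\mu_0(Q,E)\ne 0$, also $\Ass_A\M_r=\Ass_A\M_s$ and $\dim\M_r=\dim\M_s$; moreover $\mu_j(P,\M_r)\ne 0$ forces $\mu_j(P,\M_{-m})\ne 0$ and $\mu_j(P,\M_0)\ne 0$, which gives (6b) and, with $j=0$, the inclusions $\Ass_A\M_r\subseteq\Ass_A\M_{-m}\cap\Ass_A\M_0$ and hence (6c).

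The main obstacle is part (1). With $A$ local, the plan is to reduce to showing $\Ext^j_A(k,\M_c)=0$ for all $j>\dim_A\M_c$, where $k$ is the residue field. Enlarging the residue field and then completing --- neither of which changes $\injdim_A\M_c$ or $\dim_A\M_c$, since the residue field is unaffected --- I may assume $A=k[[Y_1,\dots,Y_d]]$ with $k$ infinite, so that $R=A[X_1,\dots,X_m]$ is exactly case (iii) of Theorem \ref{main-koszul}. Resolving $k$ by the Koszul complex on the regular system of parameters $Y_1,\dots,Y_d$ identifies $\Ext^j_A(k,\M_c)$ with the degree-$c$ component of $H^j(Y_1,\dots,Y_d;\M)\cong H_{d-j}(Y_1,\dots,Y_d;\M)$, which by Theorem \ref{main-koszul} is a graded $F_{\ov{R}}$-finite module over $\ov{R}=k[X_1,\dots,X_m]$; in the same way $\Ass_A\M_c$, and therefore $\dim_A\M_c$, is encoded in the graded localizations of these same Koszul (co)homology modules. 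What remains --- and this is the technical heart, a genuine analogue of Lyubeznik's theorem rather than a consequence of it, because $\M_c$ itself typically has infinite Bass numbers by \ref{bass-basic}(i), so $\injdim\le\dim$ cannot be applied to $\M_c$ directly --- is to prove the vanishing $H_{d-j}(Y_1,\dots,Y_d;\M)_c=0$ for $j>\dim_A\M_c$, exploiting the $F_{\ov{R}}$-finiteness and the grading of the Koszul (co)homology of $\M$.
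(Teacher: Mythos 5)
Your handling of parts (2)--(6) is correct and essentially the route the paper takes: the paper also reads (2), (4) and (6a,b) off Theorem \ref{bass-basic} via $\injdim E=\sup\{j:\mu_j(P,E)\neq 0 \text{ for some } P\}$, and obtains (3), (5), (6c) from constancy of supports (it localizes and quotes Theorem \ref{tame}, while you quote Theorem \ref{ass}; these are interchangeable).

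Part (1), however, has a genuine gap, and it is twofold. First, the reduction is not valid as stated: for a module $E$ over a local ring $(A,\mathfrak{m},k)$ that is \emph{not} finitely generated, vanishing of $\Ext^j_A(k,E)$ for $j>N$ does not imply $\injdim_A E\le N$; Bass's theorem and the reduction to the closed fibre require finite generation. For example, with $A=k[[x,y]]$ and $P=(x)$, the module $E=A_P$ satisfies $\Ext^j_A(k,A_P)=\Ext^j_A(k,A)\otimes_A A_P=0$ for all $j$, yet $\injdim_A A_P=1$. Since $\M_c$ typically has infinite Bass numbers, you must bound $\mu_j(P,\M_c)$ for \emph{every} prime $P$ of $A$ (equivalently, run your argument over $\Ext^j_{A_P}(\kappa(P),(\M_c)_P)$ for each $P$), not just over residue fields of maximal ideals. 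Second, even granting a corrected reduction, the statement you land on --- $H_{d-j}(Y_1,\ldots,Y_d;\M)_c=0$ for $j>\dim\M_c$ --- is exactly the content of the inequality and is left unproven; you flag it yourself as the technical heart. The paper's actual proof of (1) is short and bypasses Koszul homology entirely: by Lemma \ref{lyu-lemma} one has $\mu_j(P,\M_c)=\mu_0(P,H^j_P(\M_c))$ provided $(H^j_P(\M_c))_P$ is injective, and this hypothesis is verified in Proposition \ref{lyu-lemma-hypoth} by noting that $H^j_{PR}\circ\FF$ is again a graded Lyubeznik functor, so the standard technique \ref{referee} together with Lemma \ref{support-m} shows the localized component is a direct sum of copies of $E_A(A/P)$. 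Grothendieck's vanishing theorem then gives $H^j_P(\M_c)=0$ for $j>\dim\M_c$, hence $\mu_j(P,\M_c)=0$ for all such $j$ and all $P$ simultaneously. To complete your proposal you would need either to import this mechanism or to supply an independent proof of your Koszul vanishing statement at every prime.
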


\begin{remark}
It might be asked whether we can give characteristic free proofs of the above results on graded components. However note that in char 0, for the ring $k[X_1,\ldots, X_d]$ we also have
De Rham cohomology. This enables in characteristic zero more transparent proofs. As written in \cite{P} the main contribution in that paper was to guess the results.
In this paper we already knew what to prove. However the proofs are not as transparent as in characteristic zero.
\end{remark}

\s \label{fg} \textbf{Application-II:} We now give an application of our result for which we do not have an analogue in characteristic zero.
Let $A = k[[Y_1, \ldots, Y_d]]$ where $k$ is an infinite field of characteristic $p > 0$ and let $R = A[X_1, \ldots, X_m]$ be standard graded. Let $I$ be a homogeneous  ideal in $R$ and let $S = R/I = \bigoplus_{n \geq 0}S_n$.
Let $\n$ be the unique maximal homogeneous ideal of $S$. Set
\[
\fg_\n(S) = \max \{ r \mid H^i_{\n}(S)_n = 0 \ \text{for all but finitely many $n$ and all $i < r$} \}.
\]
Let $\Proj(S)$ consist of homogeneous prime ideals of $S$ which do not contain $S_+$. It can be shown \cite[2.2, 2.4]{HM}
\[
\fg_\n(S) = \min_{P \in \Proj{(S)}}\{ \depth S_P + \dim S/P \}.
\]
\begin{remark}\label{proj-remark}
If $S$ is equi-dimensional and if $\Proj(S)$ is non-empty and \CM \ then $\fg_\n(S) = \dim S$.
\end{remark}
We show
\begin{theorem}\label{app-fg}
(with hypotheses as in \ref{fg}). Let $i < \fg_\n(S)$. Then \\ $H^{\dim R -i}_I(R)_n = 0$ for all $n \geq 0$.
\end{theorem}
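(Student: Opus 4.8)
The plan is to rephrase the hypothesis via graded local duality over $R$, use the Frobenius to realise the relevant local cohomology module as a direct limit which is manifestly $R_+$-torsion, and then extract the vanishing from a spectral sequence whose abutment is concentrated in negative degrees, using Theorem~\ref{tame} to pass from ``$n\gg0$'' to ``$n\ge0$''. Put $N=\dim R=d+m$ and $\M=H^{N-i}_I(R)$. Since $H^{N-i}_I(-)$ is a graded Lyubeznik functor, $\M$ satisfies Theorem~\ref{tame}, so by the equivalence (i)$\Leftrightarrow$(iv) in (b) it is enough to show $\M_n=0$ for $n\gg0$. Because $A=k[[Y_1,\dots,Y_d]]$ is complete regular (hence Gorenstein) local, $R$ has graded canonical module $\omega_R=R(-m)$, and graded local duality gives graded isomorphisms $H^j_\n(S)^{\vee}\cong\Ext^{N-j}_R(S,R)(-m)$ for all $j$, where $(-)^{\vee}={}^{*}\Hom_A(-,E_A(k))$ and $H^j_\n(S)=H^j_{\mathfrak M}(S)$ for $\mathfrak M$ the maximal graded ideal of $R$. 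Dualising the grading, $H^j_\n(S)_\ell=0$ for $\ell\ll0$ if and only if $\Ext^{N-j}_R(S,R)_\ell=0$ for $\ell\gg0$; since $i<\fg_\n(S)$ this holds for every $j\le i$, so $\Ext^k_R(S,R)$ is bounded above — hence finitely generated over $A$, hence $R_+$-torsion — for every $k\ge N-i$.

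Now comes the characteristic $p$ step. As $R$ is regular, the Frobenius on $R$ is flat (Kunz), so the Frobenius functor $F$ is exact and commutes with $\Ext_R(-,R)$ on finitely generated modules. Since $F^e(S)=R/I^{[p^e]}$ and the Frobenius powers $I^{[p^e]}$ are cofinal with the ordinary powers of $I$, one has $H^k_I(R)\cong\varinjlim_e\Ext^k_R(R/I^{[p^e]},R)\cong\varinjlim_e F^e(\Ext^k_R(S,R))$, and since $F^e$ and filtered colimits preserve $R_+$-torsion modules, $H^k_I(R)$ is $R_+$-torsion for every $k\ge N-i$.

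Next I would use the Grothendieck spectral sequence $E_2^{p,q}=H^p_{R_+}(H^q_I(R))\Rightarrow H^{p+q}_{I+R_+}(R)$. Its abutment is concentrated in degrees $\le-m$: from the other composition $\Gamma_{I+R_+}=\Gamma_{(I\cap A)R}\circ\Gamma_{R_+}$, together with $H^j_{R_+}(R)=0$ for $j\ne m$ and the fact that $H^m_{R_+}(R)$ is $A$-flat and supported in degrees $\le-m$, one computes $H^n_{I+R_+}(R)\cong H^{n-m}_{I\cap A}(A)\otimes_A H^m_{R_+}(R)$, which lives in degrees $\le-m$. Since $H^q_I(R)$ is $R_+$-torsion for $q\ge N-i$, the rows $q\ge N-i$ of the $E_2$-page are concentrated in the column $p=0$, with $E_2^{0,q}=H^q_I(R)$. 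When $m=1$ the spectral sequence degenerates at $E_2$, so $H^{N-i}_I(R)=E^{0,N-i}_\infty$ is a quotient of $H^{N-i}_{I+R_+}(R)$, hence is concentrated in degrees $\le-1$ and in particular vanishes in degrees $\ge0$, finishing the proof in this case.

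For $m\ge2$ the term $E^{0,N-i}_\infty$ is still a quotient of $H^{N-i}_{I+R_+}(R)$ (hence lives in degrees $\le-m$) and a submodule of $H^{N-i}_I(R)$, and $H^{N-i}_I(R)/E^{0,N-i}_\infty$ is built from subquotients of the finitely many groups $H^r_{R_+}(H^{N-i-r+1}_I(R))$ with $2\le r\le m$; those with $N-i-r+1\ge N-\fg_\n(S)+1$ vanish because their argument is then $R_+$-torsion. It therefore remains to show that $H^r_{R_+}(H^q_I(R))_n=0$ for $n\ge0$, $2\le r\le m$, $q\le N-\fg_\n(S)$; since $H^r_{R_+}(-)_n\cong H^{r-1}\!\big(\Proj R,\widetilde{(-)}(n)\big)$ for $r\ge2$, this is a Serre-type vanishing for the local-cohomology sheaves $\widetilde{H^q_I(R)}=\mathcal H^{q}_{\Proj S}(\mathcal O_{\Proj R})$ on $\mathbb P^{m-1}_A$ — which I would prove from their unit-$F$-module structure together with the uniform-in-$e$ decomposition $F^e_*\mathcal O_{\mathbb P^{m-1}}(s)=\bigoplus_{a=0}^{m-1}\mathcal O(-a)^{\oplus(\cdot)}$, with an induction on $m$ where needed — and then upgrade to all $n\ge0$ by applying Theorem~\ref{tame} to the graded Lyubeznik functors $H^r_{R_+}\circ H^q_I$. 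I expect this last Serre-type vanishing for the (non-coherent) $F$-finite sheaves to be the main obstacle: it does not follow formally from $R_+$-torsion-ness — a general graded $F_R$-finite $R_+$-torsion module need not vanish in non-negative degrees — so it is precisely here that $F$-finiteness, and hence the Koszul-homology machinery of Theorem~\ref{main-koszul}, is genuinely used.
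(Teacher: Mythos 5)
Your first half is correct and is in fact a clean alternative route to the key intermediate fact: using graded local duality over $R$ and the direct limit $H^k_I(R)\cong\varinjlim_e F^e(\Ext^k_R(S,R))$ you correctly deduce that $H^{k}_I(R)$ is $R_+$-torsion for all $k\ge \dim R-i$. (The paper gets the same torsionness differently, via Lyubeznik's graded roots and the Frobenius-stable invariant $\fg^*(S)\ge\fg_\n(S)$, which yields the slightly stronger Theorem \ref{fg-l}.) Your $m=1$ argument is then complete. But for $m\ge 2$ there is a genuine gap, and it sits exactly where you flag it: the ``Serre-type vanishing'' $H^r_{R_+}(H^{q'}_I(R))_n=0$ for $n\ge 0$ is never proved, only announced as something you ``would prove'' by an induction on $m$ using decompositions of $F^e_*\mathcal{O}(s)$. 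As written, the proof does not close.

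Moreover, the belief steering you into the spectral sequence is false. You assert that ``a general graded $F_R$-finite $R_+$-torsion module need not vanish in non-negative degrees''; the paper's Lemma \ref{criterion} proves the opposite: for $R=A[X_1,\dots,X_m]$ with $A$ a regular domain containing an infinite field of characteristic $p>0$ and $\M$ a graded $F_R$-finite module, the conditions $\Gamma_{R_+}(\M)=\M$, $\M_n=0$ for $n\ge 0$, and $\M_n=0$ for $n\gg 0$ are all equivalent. (The Eulerian constraint of Ma--Zhang rules out the shifted injective hulls one might imagine as counterexamples; a shift of a graded $F$-module is not a graded $F$-module.) This equivalence is the heart of the paper's proof and is where Theorem \ref{r1} genuinely enters: one inducts on $\dim A$, localizes and completes at a minimal prime of $\M_0$, and applies the Koszul result to $H_0(Z_r;\M)$ and $H_1(Z_r;\M)$ for a regular parameter $Z_r$ of the coefficient ring to conclude that $Z_r$ acts bijectively on $\M_0$, forcing $\M_0$ to live over $A_{Z_r}$ and contradicting the induction hypothesis. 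With Lemma \ref{criterion} in hand your entire spectral-sequence apparatus is unnecessary --- $R_+$-torsionness of $H^{\dim R-i}_I(R)$ immediately gives the vanishing in degrees $\ge 0$; without it, the residual terms $H^r_{R_+}(H^{q'}_I(R))$ for $r\ge 2$ are left unhandled, and the missing ingredient is precisely this lemma.
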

Recall Peskine and Szpiro proved that if $R/I$ is \CM  \ then $H^{j}_I(R) = 0$ for all $j > \height I$; see \cite[III.4.1]{PS}. Our  next result yields information when only $\Proj(S)$ is assumed to be
\CM.
\begin{corollary}
 Let $(A,\m)$ be a regular local ring and let $R = A[X_1,\ldots, X_m]$. Let $I$ be a graded ideal in $R$ such that $I$ is equidimensional and $\Proj (R/I)$ is \CM. Then $H^{j}_I(R)_n = 0$ for all $n \geq 0$ and for all $j > \height I$.
\end{corollary}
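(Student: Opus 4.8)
The plan is to reduce to the setting of \ref{fg} and then combine Theorem~\ref{app-fg} with Remark~\ref{proj-remark}. \emph{Reduction to $A = k[[Y_1,\dots,Y_d]]$ with $k$ infinite.} Since $R = A[X_1,\dots,X_m]$ is flat over $A$, which sits in degree zero, any faithfully flat local homomorphism $A \rt A'$ yields $R' := R\otimes_A A' = A'[X_1,\dots,X_m]$, a degree-preserving isomorphism $H^j_{IR'}(R') \cong H^j_I(R)\otimes_A A'$ (so $H^j_I(R)_n = 0$ iff $H^j_{IR'}(R')_n = 0$), and $\height IR' = \height I$. Applying this with $A' = \wh{A}$, then with $A' = A[t]_{\m A[t]}$ (which has infinite residue field), then completing once more and using Cohen's structure theorem, we may assume $A = k[[Y_1,\dots,Y_d]]$ with $k$ infinite of characteristic $p$, i.e.\ we are in the situation of \ref{fg}. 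The hypotheses ``$R/I$ equidimensional'' and ``$\Proj(R/I)$ \CM'' are preserved along these base changes because a regular local ring is excellent, so each of the maps above is flat with geometrically regular fibres; one invokes the standard ascent/descent statements for equidimensionality (applied to the local ring $(R/I)_\n$, whose minimal primes are the contractions of the graded minimal primes of $R/I$) and for the \CM\ property (applied to the rings $(R/I)_{\mathfrak{q}}$ with $\mathfrak{q}$ a homogeneous prime not containing the irrelevant ideal). Note also that ``$\Proj(R/I) = \emptyset$'' is equivalent to $(R/I)_n = 0$ for $n \gg 0$, a condition insensitive to faithfully flat base change.

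\emph{The degenerate case $\Proj(R/I) = \emptyset$.} Here $\sqrt I \supseteq R_+$ and, checking on homogeneous generators, $\sqrt I = \sqrt{(I\cap A)R + R_+}$, so $H^j_I(R) = H^j_{(I\cap A)R + R_+}(R)$. Feeding the flat-base-change identity $H^q_{(I\cap A)R}(R) = H^q_{I\cap A}(A)\otimes_A R$ into the spectral sequence $H^p_{R_+}\!\left(H^q_{(I\cap A)R}(R)\right) \Rightarrow H^{p+q}_{(I\cap A)R + R_+}(R)$, and using that $H^p_{R_+}(R\otimes_A N)$ vanishes unless $p = m$ and is concentrated in degrees $\leq -m$ when $p = m$ (for any $A$-module $N$), one finds that $H^j_I(R)$ is concentrated in degrees $\leq -m$; in particular $H^j_I(R)_n = 0$ for all $n\geq 0$ and all $j$.

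\emph{The main case $\Proj(R/I) \neq \emptyset$.} Set $S = R/I$. By hypothesis $S$ is equidimensional and $\Proj(S)$ is non-empty and \CM, so Remark~\ref{proj-remark} gives $\fg_\n(S) = \dim S$. Since $R_\n$ is regular local, hence catenary and equidimensional, and the minimal primes of the graded ideal $I$ all lie in $\n$, one has $\height I = \dim R_\n - \dim S_\n = \dim R - \dim S$, hence $\fg_\n(S) = \dim R - \height I$. Now fix $j > \height I$. If $j > \dim R$ then $H^j_I(R) = 0$ by Grothendieck vanishing. Otherwise $0 \leq i := \dim R - j < \dim R - \height I = \fg_\n(S)$, and Theorem~\ref{app-fg} gives $H^j_I(R)_n = H^{\dim R - i}_I(R)_n = 0$ for all $n\geq 0$.

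The principal difficulty lies in the reduction step: one must verify with some care that equidimensionality of $R/I$ and Cohen-Macaulayness of $\Proj(R/I)$ persist under the passages $A \rt \wh{A}$ and $A \rt A[t]_{\m A[t]}$, which is precisely where the excellence of $A$ (geometrically regular formal fibres, universal catenarity) is needed. Everything after the reduction is a bookkeeping combination of Theorem~\ref{app-fg} with the formula $\fg_\n(S) = \dim S$ for an equidimensional $S$ whose $\Proj$ is Cohen-Macaulay.
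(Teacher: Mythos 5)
Your proof is correct and follows essentially the same route as the paper's: reduce by faithfully flat base change (residue field extension and completion) to the setting of \ref{fg}, use Remark~\ref{proj-remark} to get $\fg_\n(S)=\dim S$, and then apply Theorem~\ref{app-fg} together with $\height I = \dim R - \dim S$. Your separate treatment of the degenerate case $\Proj(R/I)=\emptyset$ (which the paper's Theorem~\ref{fg-max} excludes by hypothesis) is a reasonable extra precaution but does not change the argument in substance.
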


In the proof of Theorem \ref{app-fg} we use the face that $k$ is of characteristic $p > 0$. However the statement of Theorem \ref{app-fg} is not dependent on characteristic. So we ask
\begin{question}
  Does the assertion of Theorem \ref{app-fg} hold if $k$ is of characteristic zero?
\end{question}

In view of Theorem \ref{app-fg} we study the invariant
\[
c(R, S) = \max \{ r \mid H^{\dim R - i}_{I}(R)_n = 0 \ \text{for all $n \geq 0$ and all $i < r$} \}.
\]
In Proposition \ref{fg-prop} we show that $c(R,S) = \fg^*(S)$, an invariant of $S$. Thus $c(R,S)$ is independent of $R$.

.

\s \label{koszul-app} \textbf{Application-III:}
Let $(A,\m)$ be a equicharacteristic Noetherian local ring with infinite residue field. Assume there $\pi \colon R \rt A$ is a surjection where $R$ is a  regular local ring of dimension $d$.  Let $\m$  be the maximal ideal of $R$.
We show
\begin{theorem}\label{koszul-app-th}
(with hypotheses as in \ref{koszul-app}) The Koszul cohomology modules \\ $H^j(\m, H^{d-i}_{\ker \pi}(R))$ depends only on $A,i$ and $j$ and neither  on $R$ nor on $\pi$.
\end{theorem}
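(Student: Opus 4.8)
The plan is to reduce the assertion to an independence statement that follows from Theorem \ref{main-koszul}. The standard device is to compare two presentations $\pi_1 \colon R_1 \to A$ and $\pi_2 \colon R_2 \to A$ through a common regular local ring and then compute the Koszul cohomology after going to the completion, where everything is a power series ring. More precisely, I would first pass to the $\m$-adic completion: since $R$ is regular local of dimension $d$ with residue field $k$, we have $\widehat{R} \cong k[[Y_1,\ldots,Y_d]]$ (using that $A$, hence $R$, is equicharacteristic), and $H^{d-i}_{\ker\pi}(R)\otimes_R \widehat R = H^{d-i}_{\ker\widehat\pi}(\widehat R)$ by flat base change; moreover Koszul cohomology commutes with the faithfully flat extension $R\to\widehat R$ after noting that $H^j(\m,-)$ only sees the $\m$-torsion so nothing is lost. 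Thus it suffices to prove the statement when $R = k[[Y_1,\ldots,Y_d]]$ with $\m = (Y_1,\ldots,Y_d)$, and $\ker\pi$ is an arbitrary ideal with $R/\ker\pi \cong A$.

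Next I would invoke the fact that $H^{d-i}_{\ker\pi}(R)$ is an $F_R$-finite module (this is Lyubeznik's theorem: local cohomology of a regular ring of characteristic $p$ at any ideal carries a natural $F$-finite structure). Then Theorem \ref{main-koszul}, applied with $R = k[[Y_1,\ldots,Y_d]]$ and $r = d$ (so $\overline R = k$), together with Corollary \ref{cor-koszul}, tells us that $H^j(\m, H^{d-i}_{\ker\pi}(R))$ is a finite-dimensional $k$-vector space. So the content of the theorem is not finiteness but \emph{independence of the presentation}. For this I would use the comparison trick: given two surjections $\pi_\ell \colon R_\ell \to A$, $\ell = 1,2$, with $R_\ell = k[[Y^{(\ell)}_1,\ldots,Y^{(\ell)}_{d_\ell}]]$, form $R_3 = k[[Y^{(1)}_1,\ldots,Y^{(1)}_{d_1},Y^{(2)}_1,\ldots,Y^{(2)}_{d_2}]]$, which surjects onto $A$; it suffices to compare $R_1$ with $R_3$ (and symmetrically $R_2$ with $R_3$). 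So we are reduced to the case $R_2 = R_1[[Z_1,\ldots,Z_c]]$, $\pi_2$ extending $\pi_1$ by $Z_j \mapsto 0$, and $\ker\pi_2 = \ker\pi_1 R_2 + (Z_1,\ldots,Z_c)$.

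In that situation I would compute as follows. Write $B = R_1$, $\mathfrak{b} = \ker\pi_1$, so $R_2 = B[[\mathbf Z]]$, $d_2 = d_1 + c$. One has, by the long exact sequence for local cohomology along the regular sequence $\mathbf Z$ (or by the fact that $H^\bullet_{(\mathbf Z)}(B[[\mathbf Z]]) $ is concentrated in degree $c$ and equals a shifted copy of $B[[\mathbf Z]]$ localized appropriately), an identification
\[
H^{d_2 - i}_{\ker\pi_2}(R_2) \cong H^{d_1 - i}_{\mathfrak b}(B) \otimes_B H^c_{(\mathbf Z)}(B[[\mathbf Z]]),
\]
i.e. $H^{d_2-i}_{\ker\pi_2}(R_2)$ is obtained from $H^{d_1-i}_{\mathfrak b}(B)$ by "adding $c$ variables and taking top local cohomology in them," which is exactly the $F$-module operation of base change along $B \to B[[\mathbf Z]]$ composed with the $c$-fold Čech/Koszul construction. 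Now the Koszul cohomology $H^j(\m_2, -)$ with $\m_2 = \m_1 B[[\mathbf Z]] + (\mathbf Z)$: since the $Z_j$ already act invertibly (up to the shift) on $H^c_{(\mathbf Z)}(B[[\mathbf Z]])$ in the appropriate sense, the Koszul complex on $\mathbf Z$ applied to this module is quasi-isomorphic to a shift, and one is left with $H^j(\m_1, H^{d_1-i}_{\mathfrak b}(B))$. Concretely, $H^c_{(\mathbf Z)}(B[[\mathbf Z]])$ is, as a $B$-module with Frobenius action, built so that $H^\bullet(\mathbf Z; -)$ of it collapses, giving $H^j(\m_2, H^{d_2-i}_{\ker\pi_2}(R_2)) \cong H^j(\m_1, H^{d_1-i}_{\mathfrak b}(B))$, which is the desired independence.

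The main obstacle I expect is the last computation: carefully justifying that the Koszul complex on $\mathbf Z$ applied to $H^c_{(\mathbf Z)}(B[[\mathbf Z]])$ (with its $F$-module structure) is acyclic except in one spot and that the surviving cohomology reproduces the Koszul cohomology over $B$ — this requires knowing the $F$-module structure of $H^c_{(\mathbf Z)}(B[[\mathbf Z]])$ precisely enough, and that the isomorphism displayed above is compatible with the Koszul differentials for $\m_1$. A clean way to organize it is probably to iterate the single-variable case ($c = 1$): for one extra variable $Z$, use the exact sequence $0 \to B[[Z]] \xrightarrow{Z} B[[Z]] \to B \to 0$ to relate $H^{d_1-i+1}_{\ker\pi_2}(R_2)$ to $H^{d_1-i}_{\mathfrak b}(B)$, then chase the Koszul complex on $(Y^{(1)}_1,\ldots,Y^{(1)}_{d_1}, Z)$ using that $Z$ acts surjectively (indeed bijectively on the relevant subquotient) so that $H_\bullet(Z; -)$ kills the $Z$-direction, reducing the $(d_1+1)$-variable Koszul homology to the $d_1$-variable one — and all of this takes place inside $F_{\overline R}$-mod by Theorem \ref{main-koszul}, which is what makes the finiteness and hence the bookkeeping legitimate. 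Induction on $c$ then finishes.
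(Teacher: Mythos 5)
Your overall strategy coincides with the paper's: pass to the completion (this is Lemma \ref{koz-lemma-1}), compare two presentations through a common complete regular ring (the paper uses the complete tensor product $T = R\widehat\otimes_k S$), reduce to adjoining variables $Z_j$ mapping to $0$, and then use the sequence $0 \to S \xrightarrow{Z} S \to R \to 0$ together with a Koszul-complex chase to drop one variable at a time. The iterated one-variable version you sketch in your last paragraph is exactly the paper's Lemma \ref{quotient}; the K\"unneth-type tensor decomposition you propose first is neither needed nor obviously correct, and the paper avoids it entirely.

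There is, however, one genuine gap, and it sits at the crux. Everything hinges on the claim that multiplication by the new variable $Z$ is \emph{surjective} on $H^{m+1}_J(S)$ and on the intermediate Koszul homology modules $H_j(\bX'; H^{m+1}_J(S))$, these being $Z$-torsion since $Z \in J$. You assert this (``$Z$ acts surjectively \ldots\ so that $H_\bullet(Z;-)$ kills the $Z$-direction''), but the only justification you gesture at is the explicit structure of $H^c_{(\mathbf Z)}(B[[\mathbf Z]])$, which is not the module in question: $J$ is an arbitrary ideal containing $Z$. The statement actually needed is that $H_0(Z;\N)=0$ for \emph{any} $Z$-torsion $F$-finite module $\N$, and this does not follow from the finiteness assertions of Theorem \ref{main-koszul} or Theorem \ref{r1} (a nonzero module killed by $Z$ can perfectly well be $F$-finite over $S/ZS$). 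In the paper this is Lemma \ref{m-torsion}, whose proof uses that an $F$-finite module is a $\D$-module together with the operator identity $\partial_{[c]}Z^c = \sum_{i}\binom{c}{i}Z^{c-i}\partial_{[c-i]}$, from which $Z^c t = 0$ forces $t \in Z\N$. Without supplying this lemma (or an equivalent), the Koszul chase does not close; with it, your plan goes through essentially as the paper's proof does.
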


We now describe in brief the contents of this paper. In section two we discuss a few preliminary results  that we need. In section three we discuss an abstraction of a result due to Lyubeznik. In section four we prove Theorem \ref{main-koszul} when $r = 1$. In section five we prove Theorem \ref{main-koszul} in general. In section six we give proofs of Corollary \ref{cor-koszul} and \ref{cor-graded}. In section seven we give prrofs of most of application I when $A$ is an infinite  field of characteristic $p > 0$. In the next section we prove Theorem
\ref{vanish} and Theorem \ref{tame}. In section nine we prove Theorems \ref{bass-basic}, \ref{bass-m-one} and Theorem \ref{bass-growth}.  In the next section we prove Theorem  \ref{ass}. In section eleven we prove Theorem \ref{injdim-and-dim}. In section twelve we prove Theorem \ref{inf-gen}. In section thirteen we give proof of application II. In the next section we discuss application III.
\section{Preliminaries}

In this section, we discuss a few preliminary results that we need.

 \s \textbf{Lyubeznik functors:} \\
 Let $B$ be a commutative Noetherian ring and let $X = \Spec(B)$. Let $Y$ be a locally closed subset of $X$.
 If $M$ is a $B$-module and  $Y$ be a locally closed subscheme of $\Spec(R)$, we denote by $H^i_Y(M)$ the
 $i^{th}$-local cohomology module of $M$ with support in $Y$.  Suppose
 $Y = Y_1 \setminus Y_2$ where $Y_2 \subseteq Y_1$ are two closed subsets of $X$ then we have an exact sequence of functors
 \[
 \cdots \rt H^i_{Y_2}(-) \rt H^i_{Y_1}(-) \rt H^i_Y(-) \rt H^{i+1}_{Y_2}(-) \rt .
 \]
 A Lyubeznik functor $\FF$ is any functor of the form $\FF = \FF_1\circ \FF_2 \circ \cdots \circ \FF_m$ where every functor $\FF_j$  is either $H^i_Y(-)$ for some locally closed subset of $X$ or the kernel, image or
cokernel of some arrow in the previous long exact sequence for closed
subsets $Y_1,Y_2$ of $X$  such that $Y_2 \subseteq Y_1$.

\s \textit{Lyubeznik functor under flat maps:}\\
We need the following result from \cite[3.1]{Lyu-1}.
\begin{proposition}\label{flat-L}
 Let $\phi \colon B \rt C$ be a flat homomorphism of Noetherian rings. Let $\FF$ be a
 Lyubeznik functor on $Mod(B)$. Then there exists a Lyubeznik functor $\widehat{\FF}$ on $Mod(C)$ and
 isomorphisms $\widehat{\FF}(M\otimes_B C) \cong \FF(M)\otimes_B C$ which is functorial in $M$.
\end{proposition}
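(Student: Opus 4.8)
The plan is to exploit the recursive definition of a Lyubeznik functor and argue by induction on the number $m$ of elementary functors in an expression $\FF = \FF_1 \circ \cdots \circ \FF_m$, reducing the whole statement to the behaviour of a single building block under the flat base change $- \otimes_B C$. The key geometric input will be that the preimage under $\Spec C \rt \Spec B$ of a closed (resp. locally closed) subset of $\Spec B$ is again closed (resp. locally closed), so that the combinatorial data defining $\FF$ transports to $\Spec C$.

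\textbf{The two building blocks.} First I would treat $\FF_j = H^i_{V(\aF)}(-)$ for a closed subset $V(\aF) \sub \Spec B$, say $\aF = (f_1,\ldots,f_t)$. Computing local cohomology with the \v{C}ech complex $\check{C}^\bullet(f_1,\ldots,f_t;-)$ and using $M_f \otimes_B C \cong (M\otimes_B C)_f$ (flatness), one gets $\check{C}^\bullet(f_1,\ldots,f_t;M)\otimes_B C \cong \check{C}^\bullet(f_1,\ldots,f_t;M\otimes_B C)$ as complexes of $C$-modules, naturally in $M$; since $C$ is $B$-flat, taking cohomology commutes with $-\otimes_B C$, yielding a natural isomorphism $H^i_{V(\aF)}(M)\otimes_B C \cong H^i_{V(\aF C)}(M\otimes_B C)$, where $V(\aF C)$ is the scheme-theoretic preimage of $V(\aF)$. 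Next, for $\FF_j = H^i_Y(-)$ with $Y = Y_1\setminus Y_2$ ($Y_2 \sub Y_1$ closed), set $\wh{Y}_i$ to be the preimage of $Y_i$ in $\Spec C$ and $\wh{Y} = \wh{Y}_1 \setminus \wh{Y}_2$; applying the exact functor $-\otimes_B C$ to the long exact sequence relating $H^\bullet_{Y_2}, H^\bullet_{Y_1}, H^\bullet_Y$ and using the closed case just proved to identify it with the corresponding long exact sequence over $C$ for $\wh{Y}_1,\wh{Y}_2,\wh{Y}$, a five-lemma comparison produces a natural isomorphism $H^i_Y(M)\otimes_B C \cong H^i_{\wh{Y}}(M\otimes_B C)$. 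Finally, if $\FF_j$ is the kernel, image or cokernel of an arrow $\alpha$ in the long exact sequence of a pair $Y_2 \sub Y_1$, then since $-\otimes_B C$ is exact it commutes with kernels, images and cokernels, and by the closed case $\alpha \otimes_B C$ is identified with the analogous arrow $\wh{\alpha}$ over $C$; taking $\wh{\FF}_j$ to be the corresponding kernel/image/cokernel of $\wh{\alpha}$ gives the desired natural isomorphism. In every case $\wh{\FF}_j$ is an elementary functor on $Mod(C)$.

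\textbf{The induction.} Write $\FF = \FF_1 \circ \mathcal{G}$ with $\mathcal{G} = \FF_2 \circ \cdots \circ \FF_m$. By the inductive hypothesis there is a Lyubeznik functor $\wh{\mathcal{G}}$ on $Mod(C)$ with natural isomorphisms $\wh{\mathcal{G}}(M\otimes_B C)\cong \mathcal{G}(M)\otimes_B C$, and by the building-block step there is an elementary functor $\wh{\FF}_1$ on $Mod(C)$ with natural isomorphisms $\wh{\FF}_1(N\otimes_B C)\cong \FF_1(N)\otimes_B C$ for all $B$-modules $N$. Setting $N = \mathcal{G}(M)$ and composing,
\[
\FF(M)\otimes_B C = \FF_1(\mathcal{G}(M))\otimes_B C \;\cong\; \wh{\FF}_1\!\big(\mathcal{G}(M)\otimes_B C\big) \;\cong\; \wh{\FF}_1\!\big(\wh{\mathcal{G}}(M\otimes_B C)\big),
\]
the second step using naturality of the $\FF_1$-isomorphism in its argument together with the $\mathcal{G}$-isomorphism. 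Thus $\wh{\FF} := \wh{\FF}_1 \circ \wh{\mathcal{G}}$ works and the composite isomorphism is natural in $M$.

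\textbf{The main obstacle.} The conceptual heart (flat base change for local cohomology via the \v{C}ech complex, plus exactness of $-\otimes_B C$) is routine; the real work is the naturality bookkeeping. One must verify that every isomorphism produced is functorial in $M$ and, crucially, that in the locally-closed and kernel/image/cokernel cases the base-change isomorphisms are compatible with the connecting homomorphisms of the long exact sequences, so that the five-lemma comparison and the identification $\alpha\otimes_B C \leftrightarrow \wh{\alpha}$ are legitimate. Carrying these commuting squares intact through the composition is the only delicate point, and no idea beyond the \v{C}ech description and flatness is required.
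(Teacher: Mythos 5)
Your argument is correct; note, though, that the paper offers no proof of this proposition at all — it is quoted verbatim from \cite[3.1]{Lyu-1} — and your proof (\v{C}ech-complex flat base change for the closed case, exactness of $-\otimes_B C$ plus the five lemma for the locally closed and kernel/image/cokernel cases, then induction on the length of the composite) is essentially Lyubeznik's original argument. The only point deserving explicit care, which you correctly flag, is that the base-change isomorphisms must be shown compatible with the connecting maps of the long exact sequences before the five lemma and the identification of the arrows can be invoked.
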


\s  \textbf{Graded Lyubeznik functors:} \\
Let $A$ be a commutative Noetherian ring and let $R = A[X_1,\ldots, X_m]$ be standard graded.
We say $Y$ is \textit{homogeneous }closed subset of $\text{Spec}(R)$ if
$Y= V(f_1, \ldots, f_s)$, where $f_i's$ are homogeneous polynomials in $R$.

We say $Y$ is a homogeneous locally closed subset of $\text{Spec}(R)$ if $Y=Y''-Y'$, where $Y', Y''$
are homogeneous closed subset of $\text{Spec}(R)$.  Let $\ ^*Mod(R)$ be the category of graded $R$-modules.
We have an exact sequence of  functors on $\ ^*Mod(R)$,
\begin{equation}\label{eq1} H_{Y'}^i(-) \longrightarrow H_{Y''}^i(-) \longrightarrow H_{Y}^i(-) \longrightarrow
H_{Y'}^{i+1}(-).
\end{equation}

\begin{definition}\label{defn-grade-Lyu-functor}
\textit{A graded Lyubeznik functor} $\mathcal{T}$ is a composite functor of the form
$\mathcal{T}= \mathcal{T}_1\circ\mathcal{T}_2 \circ \ldots\circ\mathcal{T}_k$, where
each $\mathcal{T}_j$ is either $H_{Y_j}^i(-)$, where $Y_j$ is a homogeneous locally closed subset of $\text{Spec}(R)$
or the kernel of any arrow appearing in (\ref{eq1}) with $Y'=Y_j'$ and $Y''= Y_j''$, where $Y_j' \subset Y_j''$ are two homogeneous  closed subsets of $\text{Spec}(R)$.
\end{definition}

\s \label{std-op} \textit{Graded Lyubeznik functors \wrt \ some standard operations on $B$.}

If $A \rt B$ is a flat map then we have a flat map of graded rings \\ $R = A[X_1, \ldots, X_m] \rt S = B[X_1,\ldots, X_m]$. If $\FF$ is a graded Lyubeznik functor
on $R$ then  there is a graded Lyubeznik functor $\wh{\FF}$  on $ \ ^*Mod(S)$  with
 $$\wh{\FF}(S) = \FF(R)\otimes_R S  =  \FF(R)\otimes_A B.$$

The specific flat base changes we do are the following;
\begin{enumerate}
 \item $B = A_P$ where $P$ is a prime ideal of $A$.
 \item If $(A,\m)$ is local then  we take $B$ to be the $\m$-adic completion of $A$.
 \item If $(A,\m)$ is local with finite residue field then take $B = A[X]_{\m A[X]}$.
\end{enumerate}

\s \label{min-loc} We will use the following well-known result often. Let $B$ be a Noetherian ring and let $M$ be an
$A$-module not necessarily finitely generated. Let $P$ be a minimal prime of $M$. Then the $B_P$-module $M_P$ has a natural structure
of an $\widehat{B_P}$-module (here $\widehat{B_P}$ is the completion of $B_P$ \wrt \ it's maximal ideal $PB_P$). In fact
$M_P \cong  M_P\otimes_{B_P}\widehat{B_P}$.

\s Let $A$ be a Noetherian ring, $I$ an ideal in $A$ and let $M$ be an $A$-module, not necessarily finitely generated.
Set
\[
\Gamma_I(M) = \{ m \in M \mid I^sm = 0 \ \text{for some} \ s \geq 0 \}.
\]
 The following result is well-known. For lack of a suitable reference we give sketch of a proof here. When $M$ is finitely generated, for a proof of the following result see \cite[Proposition 3.13]{E}.
\begin{lemma}\label{mod-G}[with hyotheses as above]
\[
\Ass_A \frac{M}{\Gamma_I(M)} = \{ P \in \Ass_A M \mid P \nsupseteq I \}
\]
\end{lemma}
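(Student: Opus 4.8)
\emph{Proof proposal.} The plan is to deduce the statement from two standard facts over a Noetherian ring $A$: that forming associated primes commutes with localization for an \emph{arbitrary} module, and an easy compatibility of $I$-torsion with localization. Put $N = M/\Gamma_I(M)$. The one computation I would carry out first is the observation: \emph{if $P$ is a prime of $A$ with $I \nsubseteq P$, then $\Gamma_I(M)_P = 0$, and hence $M_P \cong N_P$.} Indeed, pick $c \in I \setminus P$; given $m \in \Gamma_I(M)$ with $I^s m = 0$ we have $c^s m \in I^s m = 0$, and $c^s$ is a unit in $A_P$, so $m/1 = 0$ in $M_P$. Thus $\Gamma_I(M)_P = 0$, and localizing $0 \to \Gamma_I(M) \to M \to N \to 0$ at $P$ gives the asserted isomorphism $M_P \cong N_P$.

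Granting this, both inclusions are short. For $\Ass_A N \subseteq \{P \in \Ass_A M \mid P \nsupseteq I\}$, take $P = \ann_A(\bar x) \in \Ass_A N$ with $0 \neq \bar x \in N$ and a lift $x \in M$. If $I \subseteq P$ then $Ix \subseteq \Gamma_I(M)$; writing $I = (a_1,\dots,a_t)$ (here Noetherianity of $A$ is used) and picking $s$ with $I^s a_j x = 0$ for every $j$, one gets $I^{s+1}x = 0$, so $x \in \Gamma_I(M)$ and $\bar x = 0$, a contradiction. Hence $I \nsubseteq P$, so by the first paragraph $M_P \cong N_P$; therefore $PA_P \in \Ass_{A_P} N_P = \Ass_{A_P} M_P$, and compatibility of $\Ass$ with localization gives $P \in \Ass_A M$. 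Conversely, if $P \in \Ass_A M$ with $I \nsubseteq P$, then again $M_P \cong N_P$, so $PA_P \in \Ass_{A_P} M_P = \Ass_{A_P} N_P$, whence $P \in \Ass_A N$.

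The one place needing care — the ``hard part'', such as it is — is that both black-box facts must be applied to modules that need not be finitely generated. For the localization of $\Ass$: an embedding $A/P \hookrightarrow M$ localizes to $\kappa(P) \hookrightarrow M_P$, giving $PA_P \in \Ass_{A_P} M_P$; conversely, an embedding $\kappa(P) \hookrightarrow M_P$ yields, after clearing denominators, an element of $M$ whose annihilator is exactly $P$, where one uses that $P$ is finitely generated. The torsion computation in the first paragraph uses only that $I$ is finitely generated. Beyond these two points the argument is purely formal.
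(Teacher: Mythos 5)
Your proof is correct and follows essentially the same route as the paper's sketch: the key point in both is that $\Gamma_I(M)_P = 0$ for $P \nsupseteq I$, hence $M_P \cong (M/\Gamma_I(M))_P$, combined with the fact that $\Ass$ commutes with localization for arbitrary modules over a Noetherian ring. The only cosmetic difference is that the paper gets the inclusion $\{P \in \Ass_A M \mid P \nsupseteq I\} \subseteq \Ass_A(M/\Gamma_I(M))$ from the short exact sequence (via $\Ass_A M \subseteq \Ass_A \Gamma_I(M) \cup \Ass_A(M/\Gamma_I(M))$ and $\Ass_A\Gamma_I(M)$ consisting of primes containing $I$), whereas you obtain it by the same localization argument you use for the other inclusion; both are fine, and your write-up usefully makes explicit the non-finitely-generated caveats the paper leaves implicit.
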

\begin{proof}\textit{(sketch)}
Note that if $P \in \Ass_A \Gamma_I(M)$ then $P \supseteq I$. It follows that if $P \in \Ass_A M$ and $P \nsupseteq I$
then $P \in \Ass_A M/\Gamma_I(M)$.

It can be easily verified that if  $P \in \Ass_A M/\Gamma_I(M)$ then $P \nsupseteq I$. Also note that if $P \nsupseteq I$ then $\Gamma_I(M)_P = 0$. Thus
\[
M_P \cong \left(\frac{M}{\Gamma_I(M)}\right)_P  \quad \text{if} \ P \nsupseteq I.
\]
The result follows.
\end{proof}

\s \emph{$F$-modules.}\\
Let $R$ be a regular ring of characteristic $p > 0$.  The concept of $F_R$-modules was introduced in \cite{Lyu-2}. We recall the notion here.

Let $R'$ to be $R$-bimodule which agrees with $R$ as a left $R$-module, and has the right $R$-action
\[
r'r = r^pr \ \quad \text{for} \ r \in R \ \text{and} \ r' \in R'.
\]
For an $R$-module $M$ set $F(M) = R'\otimes_R M$. This is a $R$-module via the left $R$-module structure on $R'$.
An $F_R$-module is a pair $(\M,\theta)$ where $\M$ is an $R$-module and $\theta \colon \M \rt F(\M)$ is a $R$-module homomorphism. We usually suppress $\theta$ from the notation.

\s \emph{Graded $F$-modules}
Let $A$ be a regular ring containing a field of characteristic $p > 0$.
Let $R = A[X_1,\ldots, X_m]$ be standard graded. If $M$ is a graded $R$-module, and $m \in M$ is homogeneous set $|m| = \deg(m)$.
Graded $F_R$-modules have been studied previously in \cite[4.3.3]{BM} and  \cite{Z}.
If $M$ is a graded $R$-module then there is a natural grading on $F(M) = R'\otimes_R M$ defined by
\[
|r'\otimes m| = |r'| + p |m|
\]
for homogeneous elements $r' \in R'$ and $m \in M$.
With this grading, a graded $F_R$-module is
an $F_R$-module $(\M, \theta )$ where  $\M$ is a graded $R$-module, and $\theta$ is degree-preserving, i.e., $\theta$
maps homogeneous elements to homogeneous elements of the same degree.

The ring $R$ has a natural graded $F_R$-module structure  with structure morphism
\[
R \rt R'\otimes_R R  \quad  \quad  r \rt r\otimes 1.
\]

\s \emph{$F_R$-finite modules.}\\
A $F_R$-module $\M$ is said be $F_R$-finite if $\M$ is a direct limit of the top row in the commutative diagram,
\[
  \xymatrix
{
 M
 \ar@{->}[r]^{\beta}
 \ar@{->}[d]^{\beta}
  & F(M)
\ar@{->}[r]^{F(\beta)}
\ar@{->}[d]^{F(\beta)}
 & F^2(M)
\ar@{->}[r]
\ar@{->}[d]^{F^2(\beta)}
& \cdots
\\
 F(M)
 \ar@{->}[r]^{F(\beta)}
  & F^2(M)
\ar@{->}[r]^{F^2(\beta)}
 & F^3(M)
\ar@{->}[r]
& \cdots
\
 }
\]
where $M$ is a finitely generated $R$-module, $\beta \colon M \rt F(M)$ is an $R$-module homomorphis and the structure isomorphism $\theta$ is induced by the vertical maps in the diagram
(see \cite[2.1]{Lyu-2}). When $M$ is graded and $\beta$ is degree-preserving, we say that the
$F_R$-module $\M$ is graded $F_R$-finite.

The map $\beta \colon M \rt F(M)$  above is a generating morphism of $\M$. If $\beta$ is injective,
we say that $M$ is a root of $\M$, and that $\beta$ is a root morphism.

\s If $\FF $ is a Lyubeznik functor on $R$ (with $R$-regular of characteristic $p>0$) then $\FF(R)$ is $F_R$-finite, see \cite[2.14.]{Lyu-2}. A similar argument shows that if $R$ is graded and $\FF$ is a graded Lyubeznik functor on $R$ then $\FF(R)$ is graded $F_R$-finite.

\s \label{bad-de-Rham} Let $k$ be a field of  characteristic $p > 0$. Set $R = k[X]$. Let $\partial = \partial/\partial X$. Then note
the de-Rham cohomology module
\[
H^0(\partial, R) = \oplus_{ p|n} kX^n.
\]
is not finite dimensional over $k$.
\section{An abstraction of a result due to Lyubeznik}
In this section we indicate an abstraction of a result of Lyubeznik from \cite{Lyu-2}. We first indicate Lyubeznik's result.

\s (\cite[1.3]{Lyu-2})   Let $R, S$ be regular rings of characteristic $p > 0$ and let $\pi \colon R \rt S$ be a ring homomorphism. Let
$\pi^\prime_*  \colon \text{Mod}(R) \rt \text{Mod}(S)$ be the functor $S\otimes_R -$. Then Lyubeznik constructs an isomorphism of functors
$\phi \colon \pi^\prime_* \circ F_R \rt F_S\circ\pi^\prime_*$. Then he constructs a functor
\begin{align*}
  \pi_* &\colon \text{$F_R$-Mod} \rt  \text{$F_S$-Mod} \\
  \pi_*(\M, \theta) &= (\pi_*^\prime(\M), \phi\circ \pi_*^\prime(\theta)) \\
  \pi_*(f) &= \pi_*^\prime(f).
\end{align*}
In \cite[2.9]{Lyu-2} he shows that $\pi_*$ takes $F_R$-finite modules to $F_S$-finite modules.

\s Let $G \colon \text{$R$-Mod} \rt \text{$S$-Mod}$ be an additive functor. Let $\{ M_\alpha \}_{\alpha \in \Gamma}$ be a direct system of $R$-modules.
Then note that we have a map of $S$-modules
$$ \psi_G \colon \lim_{\alpha \in \Gamma} G(M_\alpha)   \rt G(\lim_{\alpha \in \Gamma} M_\alpha).$$
If for all choices of direct systems the above map is an isomorphism then we say $G$ commutes with direct limits.

\s\label{close}  A close inspection of \cite[1.3(a), 1.10(g), 2.9(a)]{Lyu-2} shows that Lyubeznik only uses the following three properties of  $\pi^\prime_*$;
\begin{enumerate}
  \item We have an isomorphism of functors $\phi \colon \pi^\prime_* \circ F_R \rt F_S\circ\pi^\prime_*$.
  \item $\pi^\prime_*$ commutes with direct limits.
  \item If $E$ is a finitely generated $R$-module then $\pi^\prime_*(E)$ is a finitely generated  $S$-module.
\end{enumerate}
Thus we have the following result:
\begin{theorem}\label{formal}
   Let $R, S$ be regular rings of characteristic $p > 0$ and let $\pi \colon R \rt S$ be a ring homomorphism. Let
$\eta  \colon \text{Mod}(R) \rt \text{Mod}(S)$ be an additive functor. Suppose
\begin{enumerate}[\rm (i)]
  \item we have an isomorphism of functors
$\phi \colon \eta \circ F_R \rt F_S\circ\eta$.
  \item $\eta$ commutes with direct limits.
  \item  if $E$ is a finitely generated $R$-module then $\eta(E)$ is a finitely generated  $S$-module.
\end{enumerate}
Then we have a functor
\begin{align*}
  \eta_* &\colon \text{$F_R$-Mod} \rt  \text{$F_S$-Mod} \\
  \eta_*(\M, \theta) &= (\eta(\M), \phi\circ \eta(\theta)) \\
  \eta_*(f) &= \eta(f).
\end{align*}
Furthermore $\eta_*$ takes $F_R$-finite modules to $F_S$-finite modules.
\end{theorem}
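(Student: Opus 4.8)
The plan is to transcribe Lyubeznik's arguments from \cite[1.3, 1.10, 2.9]{Lyu-2} with $\eta$ in place of $\pi^\prime_*$, verifying at each step that nothing beyond hypotheses (i)--(iii) is used; as noted above, these are exactly the three features of $\pi^\prime_*$ that enter Lyubeznik's proofs. First I would check that $\eta_*$ is a well-defined functor. For an $F_R$-module $(\M,\theta)$ the map $\phi_\M\circ\eta(\theta)\colon\eta(\M)\rt\eta(F_R(\M))\rt F_S(\eta(\M))$ is $S$-linear, so $\eta_*(\M,\theta)$ is an $F_S$-module (should one require structure morphisms to be isomorphisms: $\phi_\M$ is one by (i), and $\eta$ preserves isomorphisms, so the composite is one). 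For a morphism $f\colon(\M,\theta_\M)\rt(\N,\theta_\N)$, the identity $F_S(\eta(f))\circ(\phi_\M\circ\eta(\theta_\M))=(\phi_\N\circ\eta(\theta_\N))\circ\eta(f)$ follows by applying $\eta$ to the commuting square that expresses ``$f$ is a morphism of $F_R$-modules'' and pasting it onto the naturality square of $\phi$ at $f$. This step needs only (i) together with functoriality of $\eta$, $F_R$, $F_S$, and presents no difficulty.

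The real content is that $\eta_*$ preserves $F$-finiteness. Let $\M$ be $F_R$-finite with generating morphism $\beta\colon M\rt F_R(M)$, $M$ finitely generated, so that $\M=\varinjlim_i F_R^i(M)$ with transition maps $F_R^i(\beta)$ and structure morphism induced by them. Set $N=\eta(M)$ and $\gamma=\phi_M\circ\eta(\beta)\colon N\rt F_S(N)$. By hypothesis (iii), $N$ is a finitely generated $S$-module, hence $\gamma$ is a bona fide generating morphism defining an $F_S$-finite module $\varinjlim_i F_S^i(N)$. I would then build isomorphisms $\phi^{(i)}\colon\eta(F_R^i(M))\xrightarrow{\sim}F_S^i(N)$ recursively, $\phi^{(0)}=\mathrm{id}_N$ and $\phi^{(i+1)}=F_S(\phi^{(i)})\circ\phi_{F_R^i(M)}$, and check by induction on $i$ --- using naturality of $\phi$ at the map $F_R^{i-1}(\beta)$ and the defining relation $\phi_M\circ\eta(\beta)=\gamma$ --- that
\[
\phi^{(i+1)}\circ\eta\bigl(F_R^i(\beta)\bigr)=F_S^i(\gamma)\circ\phi^{(i)}\qquad (i\geq 0).
\]
Taking direct limits and invoking (ii) to move $\eta$ past the colimit defining $\M$, the maps $\phi^{(i)}$ assemble into an $S$-module isomorphism $\eta(\M)\cong\varinjlim_i F_S^i(N)$; a final diagram chase --- again naturality of $\phi$, plus that $F_S$ commutes with direct limits --- identifies $\phi_\M\circ\eta(\theta)$ with the structure morphism of $\varinjlim_i F_S^i(N)$. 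Thus $\eta_*(\M)$ is $F_S$-finite with generating morphism $\gamma$.

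The main obstacle I anticipate is purely the bookkeeping in the recursive construction of the $\phi^{(i)}$ and the verification that they are simultaneously compatible with the transition maps $F_R^i(\beta)$ and with the structure morphisms --- i.e., reproducing the diagram-chasing of \cite[1.10, 2.9]{Lyu-2}. Beyond that everything is formal: (ii) is exactly what permits commuting $\eta$ through the defining direct limit of $\M$, and (iii) is exactly what guarantees $N=\eta(M)$ is finitely generated so that $\gamma$ genuinely presents an $F_S$-finite module; (i) (with its naturality) supplies the identification $\eta\circ F_R^i\cong F_S^i\circ\eta$ at every stage.
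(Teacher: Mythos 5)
Your proposal is correct and is exactly the approach the paper takes: the paper's proof simply states that the argument of Lyubeznik \cite[1.3(a), 1.10(g), 2.9(a)]{Lyu-2} goes through verbatim once one observes (as in \ref{close}) that only properties (i)--(iii) of $\pi^\prime_*$ are used. Your write-up just makes that transcription explicit, including the recursive construction of the isomorphisms $\eta(F_R^i(M))\cong F_S^i(\eta(M))$ and the passage to the direct limit via (ii).
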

\begin{proof}
  The proof follows \cite[1.3(a), 1.10(g), 2.9(a)]{Lyu-2}  verbatim. So it is omitted.
\end{proof}
We will use Theorem \ref{formal} in the next section.

\section{Proof of Theorem \ref{main-koszul} when $r = 1$}
In this section we give a proof of Theorem \ref{main-koszul} when $r = 1$.  In fact we  prove a considerably stronger statement. We also indicate how \emph{we cannot} prove Theorem \ref{main-koszul} by induction on number of variables using Theorem \ref{r1}.
\begin{theorem}\label{r1}
  Let $R$ be a regular ring of characteristic $p > 0$. Let $X$ be a $R$-regular element such that $R/XR$ is regular.
  Let $\M$ be a $F_R$-finite module. Then the Koszul homology modules $\M/X \M$ and $(0 \colon_\M X)$ are $F_{R/(X)}$-finite.
\end{theorem}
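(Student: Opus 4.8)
The plan is to realize the Koszul homology modules $\M/X\M$ and $(0:_\M X)$ as the output of suitable functors applied to $\M$ and then invoke Theorem \ref{formal}. Write $S = R/XR$, which is regular by hypothesis, and let $\pi\colon R\to S$ be the quotient map. The functor $\eta_0 = \pi'_* = S\otimes_R -$ already satisfies the three hypotheses of Theorem \ref{formal} (this is Lyubeznik's original case), so $\M/X\M = \eta_0(\M)$ is $F_S$-finite. The real content is the submodule $(0:_\M X)$.

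For $(0:_\M X)$ I would introduce the functor $\eta_1\colon \operatorname{Mod}(R)\to\operatorname{Mod}(S)$ defined by $\eta_1(M) = (0:_M X)$, noting that $X$ kills this module so it is naturally an $S$-module, and that $\eta_1$ is additive and left exact. The three things to check are: (i) an isomorphism of functors $\phi\colon \eta_1\circ F_R \to F_S\circ\eta_1$; (ii) $\eta_1$ commutes with direct limits; (iii) $\eta_1$ sends finitely generated $R$-modules to finitely generated $S$-modules. For (iii), $(0:_M X)$ is a submodule of the Noetherian module $M$, hence finitely generated over $R$, and being $X$-torsion it is finitely generated over $S$. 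For (ii), a direct limit is a filtered colimit, which is exact and commutes with finite limits such as the kernel of multiplication by $X$; one checks directly that the canonical map $\varinjlim (0:_{M_\alpha} X)\to (0:_{\varinjlim M_\alpha} X)$ is an isomorphism.

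The main obstacle is (i): producing the natural isomorphism $\eta_1\circ F_R\cong F_S\circ\eta_1$. Here the key point is that $F_R$ is flat (since $R$ is regular, Peskine--Szpiro), so applying $F_R$ to the exact sequence $0\to (0:_M X)\to M\xrightarrow{X} M$ gives $0\to F_R(0:_M X)\to F_R(M)\xrightarrow{F_R(X)} F_R(M)$; but $F_R$ sends multiplication by $X$ on $M$ to multiplication by $X^p$ on $F_R(M)$ (this is exactly the twisted bimodule structure $r'r = r^p r$), so $F_R(0:_M X) = (0:_{F_R(M)} X^p)$. Thus the issue is comparing $(0:_{F_R(M)} X^p)$ with $F_S((0:_M X))$, and one must use that $F_S(N) = S'\otimes_S N$ together with the already-known isomorphism $\pi'_*\circ F_R\cong F_S\circ\pi'_*$ from Lyubeznik, applied to the $S$-module $(0:_M X)$, plus the fact that over the regular ring $R$ one has $(0:_{F_R M} X^p)$ agreeing with $F_R$ applied to $(0:_M X)$ via flatness — and that $(0:_M X^p)/\text{(lower torsion)}$ matches up after reduction mod $X$. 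I would carry this out by first treating $M$ a root (so $\beta\colon M\hookrightarrow F_R(M)$ is injective and $\M = \varinjlim F_R^n(M)$), reducing everything to a statement about finitely generated $R$-modules and the flat functor $F_R$, and then verifying the compatibility of $\phi$ with the structure morphisms $\theta$ so that the resulting pair is genuinely an $F_S$-module. Once (i)--(iii) are in place, Theorem \ref{formal} applied to $\eta_1$ gives that $\eta_1{}_*(\M) = (0:_\M X)$ is $F_S$-finite, completing the proof.
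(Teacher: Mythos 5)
Your overall strategy is exactly the paper's: treat $\M/X\M$ via Lyubeznik's $\pi'_*$, and handle $(0:_\M X)$ by feeding the functor $\eta(M)=(0:_M X)$ into Theorem \ref{formal}. Your verifications of conditions (ii) and (iii) are correct, and your observation that flatness of $F_R$ gives $F_R(0:_M X)=(0:_{F_R(M)}X^p)$ is also correct and is one link in the paper's chain. But condition (i) — the natural isomorphism $\phi\colon \eta\circ F_R\to F_{R/(X)}\circ\eta$ — is the entire content of the theorem, and your sketch of it is not a proof. The phrase about ``$(0:_M X^p)/(\text{lower torsion})$ matching up after reduction mod $X$'' does not correspond to an argument, and invoking Lyubeznik's isomorphism $\pi'_*\circ F_R\cong F_S\circ\pi'_*$ does not help here: that isomorphism compares $F_S(M/XM)$ with $F_R(M)/XF_R(M)$, whereas you need to compare $F_S((0:_MX))$, i.e.\ $S'\otimes_S(0:_MX)$, with the kernel of $X$ (not $X^p$) on $F_R(M)$ — a genuinely different statement.

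The missing ingredient is a natural isomorphism $\Hom_R(R/(X),F_R(N))\cong F_{R/(X)}(N)$ for $R/(X)$-modules $N$. The paper obtains this from Lyubeznik's \cite[3.1]{Lyu-2}, which gives $\Hom_R(R/(X),F_R(N))\cong\omega^{p-1}\otimes_{R/(X)}F_{R/(X)}(N)$ for an invertible module $\omega$, together with Lemma \ref{omega}, which trivializes the twist by exhibiting an explicit isomorphism $R/(X)\to\Hom_R(R/(X),R/(X^p))$, $a\mapsto X^{p-1}a$; this is also precisely where the hypothesis that $R/XR$ is regular (so that $X$ is locally a regular parameter) is used, beyond merely making $F_{R/(X)}$ defined. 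With that isomorphism $\delta$ in hand, the paper's Construction \ref{const} completes (i) by the chain $F_{R/(X)}(\eta(\M))\cong\Hom_R(R/(X),F_R(0:_\M X))=\Hom_R(R/(X),(0:_{F_R(\M)}X^p))\cong\Hom_R(R/(X)\otimes_RR/(X^p),F_R(\M))=\Hom_R(R/(X),F_R(\M))=\eta(F_R(\M))$, using Hom-tensor adjunction and $R/(X)\otimes_RR/(X^p)\cong R/(X)$. Without supplying $\delta$ (or an equivalent comparison), your proof is incomplete at its crucial step.
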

We note that the fact $\M/X \M$ is $F_{R/(X)}$-finite follows from \cite[1.3, 2.9]{Lyu-2}. We first need the following result:
\begin{lemma}\label{omega}
(with hypotheses as in \ref{r1}). The map
\begin{align*}
  R/(X)  &\xrightarrow{\alpha} \Hom_R(R/(X), R/(X^p)), \\
  a + (X) &\mapsto X^{p-1}a + (X^p),
\end{align*}
is an isomorphism.
\end{lemma}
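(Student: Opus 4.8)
The plan is to transport the statement to a colon submodule of $R/(X^p)$, where both injectivity and surjectivity reduce to cancelling powers of $X$, which is legitimate precisely because $X$ is a nonzerodivisor.

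First I would observe that $\alpha$ is well defined: if $a\in (X)$ then $X^{p-1}a\in (X^p)$, so the rule descends from $R$ to $R/(X)$. Next I would invoke the canonical isomorphism $\Hom_R(R/(X),N)\xrightarrow{\ \sim\ }(0:_N X)$, $f\mapsto f(1+(X))$, valid for every $R$-module $N$; applying it with $N=R/(X^p)$ identifies $\Hom_R(R/(X),R/(X^p))$ with
\[
(0:_{R/(X^p)}X)=\{\,y+(X^p)\mid Xy\in (X^p)\,\}\subseteq R/(X^p),
\]
and under this identification $\alpha$ becomes the map $\ov{\alpha}\colon R/(X)\rt (0:_{R/(X^p)}X)$, $a+(X)\mapsto X^{p-1}a+(X^p)$; this does land in the colon module since $X\cdot X^{p-1}a=X^{p}a\in (X^p)$. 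Thus it is enough to prove that $\ov{\alpha}$ is bijective.

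For surjectivity: given $y+(X^p)$ with $Xy\in (X^p)$, write $Xy=X^{p}z$; since $X$ is $R$-regular we cancel one factor of $X$ to get $y=X^{p-1}z$, hence $y+(X^p)=\ov{\alpha}(z+(X))$. For injectivity: if $X^{p-1}a\in (X^p)$, write $X^{p-1}a=X^{p}z$; since $X$, and therefore $X^{p-1}$, is a nonzerodivisor we cancel $X^{p-1}$ to obtain $a=Xz\in (X)$, i.e.\ $a+(X)=0$. This finishes the argument.

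A remark on the hypotheses and the main obstacle: the only property actually used is that the single element $X$ is a nonzerodivisor; the assumptions that $R$ and $R/(X)$ are regular are carried over from \ref{r1} but are not needed here, so there is no genuine difficulty. The one step that deserves a little care is the first one — checking that the canonical identification $\Hom_R(R/(X),R/(X^p))\cong(0:_{R/(X^p)}X)$ carries the specific map $\alpha$ of the statement to multiplication by $X^{p-1}$ — after which the two cancellations complete the proof.
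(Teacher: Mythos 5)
Your proof is correct, and it takes a genuinely different and more elementary route than the paper. The paper checks surjectivity by a local--global argument: it reduces to showing that $\alpha \otimes_R \wh{R_\m}$ is surjective for each maximal ideal $\m \supseteq (X)$, then uses the hypothesis that $R/XR$ is regular to realize $X$ as a regular parameter, invokes Cohen's structure theorem to write $\wh{R_\m} = \kappa(\m)[[X, X_2, \ldots, X_n]]$, and declares surjectivity clear in that explicit setting. You instead identify $\Hom_R(R/(X), R/(X^p))$ with the colon module $(0:_{R/(X^p)}X)$ and verify directly that $\{\,y \in R \mid Xy \in X^pR\,\} = X^{p-1}R$ and that $\{\,a \in R \mid X^{p-1}a \in X^pR\,\} = XR$, both by cancelling powers of the nonzerodivisor $X$. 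Your observation that only the $R$-regularity of $X$ is used --- and not the regularity of $R$ or of $R/XR$ --- is accurate; the lemma as stated holds for any nonzerodivisor in any commutative ring. What your approach buys is a shorter, completion-free, and more general argument; what the paper's approach buys is consistency with the ambient framework of Remark \ref{lyu-w}, where the regularity hypotheses genuinely matter (Lyubeznik's isomorphism involving $\omega^{p-1}$ requires $R/(X)$ regular), though they are not needed for the isomorphism of this particular lemma.
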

\begin{proof}
It is evident that $\alpha$ is well defined and injective.  To show $\alpha$ is surjective it suffices to show $\alpha \otimes_R \wh{R_\m}$ is surjective for every maximal ideal $\m$ of $R$ containing $(X)$. As $R/XR$ is regular we get $\wh{R_\m}/X\wh{R_\m}$- is regular. It follows that $X$ is a regular parameter in $\wh{R_\m}$. So we can assume
$\wh{R_\m} = \kappa(\m)[[X= X_1, X_2, \cdots, X_n]]$.  In this case it is clear that $\alpha \otimes_R \wh{R_\m}$ is surjective.
\end{proof}
\begin{remark}\label{lyu-w}
(with hypotheses as in \ref{r1}).
In \cite[3.1]{Lyu-2} Lyubeznik constructs a functorial isomorphism for each  $R/(X)$-module $\M$,
\[
\Hom_R(R/(X), F_R(\M)) \rt \omega^{p-1}\otimes_{R/(X)}F_{R/(X)}(\M).
\]
Taking $\M = R/(X)$ we get
\[
\omega^{p-1} \cong \Hom_R(R/(X), F_R(R/(X))) \cong  \Hom_R(R/(X), R/(X^p)) \cong R/(X),
\]
where the last isomorphism follows from Lemma \ref{omega}.
So we have a natural isomorphism for each  $R/(X)$-module $\M$,
\[
\delta \colon \Hom_R(R/(X), F_R(\M)) \rt F_{R/(X)}(\M).  \tag{$\dagger$}
\]
\end{remark}
\begin{construction}\label{const}
(with hypotheses as in \ref{r1}). Consider the additive functor $\eta  \colon \text{Mod}(R) \rt \text{Mod}(R/(X))$ defined as $\psi(\M) = (0 \colon_\M X)$. If $f \colon \M \rt \N$ then  note $f(0\colon_\M X) \subseteq (0 \colon_\N X)$. Define $\eta(f)$ to be this restricted map.
Consider the following sequence of functorial isomorphisms,
\begin{align*}
  F_{R/(X)}(\eta(\M)) &\xrightarrow{\delta^{-1}} \Hom_R(R/(X), F_R(0 \colon_\M X)), \\
   &= \Hom_R(R/(X), (0 \colon_\M X^p)), \\
   &\cong \Hom_R(R/(X), \Hom_R(R/(X^P), F_R(\M)), \\
  &\cong \Hom_R(R/(X)\otimes_R R/(X^p), F_R(\M)) \\
   &= \Hom_R(R/(X), F_R(\M)), \\
   &\cong \eta(F_R(M)).
\end{align*}
\end{construction}
We now give a proof of,
\begin{proof}[Proof of Theorem \ref{r1}]
As discussed earlier $\M/X M$ is $F_{R/(X)}$-finite. Next we prove $(0 \colon_\M X)$ is  $F_{R/(X)}$-finite.
To this note that the functor $\eta$ defined in \ref{const} yields $\eta(\M) =  (0 \colon_\M X)$.

We also have a functorial isomorphism $\phi \colon \eta\circ F_R \rt F_{R/(X)} \circ \eta$ (this is inverse of the isomorphism constructed in \ref{const}).

We note that $\eta \cong \Hom_R(R/(X), -)$ commutes with direct limits, see \cite[3.4.3]{BS}. Furthermore if $E$ is a finitely generated $R$-module then $\eta(E)$ is a finitely generated $R/(X)$-module.

The result follows from Theorem \ref{formal}.
\end{proof}

\begin{remark}
We now explain why we cannot prove Theorem \ref{main-koszul} by inducting on the number of variables.
Let $X, Y$ be a regular sequence in $R$ with $R/(X)$ and $R/(X, Y)$ regular rings. Then we have a short exact sequence,
\[
0 \rt H_0(Y;  H_1(X ; \M)) \rt H_1(X,Y; \M) \rt H_1(Y;  H_0(X; \M)) \rt 0.
\]
The modules at both the ends are $F_S$-finite where $S = R/(X, Y)$. \emph{However we do not know whether the  above short exact sequence is a sequence in the category of $F_S$-modules}.
So we cannot conclude that $ H_1(X,Y; \M)$ is $F_S$-finite.
\end{remark}
We however have the following corollary to Theorem \ref{r1}.
\begin{theorem}\label{top}
    Let $R$ be a regular ring of characteristic $p > 0$. Let $X_1, \ldots, X_n$ be a $R$-regular sequence  such that $R/(X_1, \ldots, X_i)R$ is regular for $i = 1, \ldots, n$.
  Let $\M$ be a $F_R$-finite module. Then the Koszul homology module $H_n(X_1, \ldots, X_n; M)$ is $F_{S}$-finite where $S =R/(X_1, \ldots, X_n)R$.
\end{theorem}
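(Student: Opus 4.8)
The plan is to prove Theorem \ref{top} by induction on $n$, stripping off one variable at a time and invoking Theorem \ref{r1} at each step. The starting observation is the standard identification of top Koszul homology with an annihilator submodule: for any $R$-module $\M$ and any sequence $X_1,\ldots, X_n$ one has
\[
H_n(X_1,\ldots, X_n; \M) = (0 \colon_\M (X_1,\ldots, X_n)R),
\]
because the top differential of the Koszul complex tensored with $\M$ is the map $\M \rt \M^n$, $m \mapsto (\pm X_i m)_i$, and its kernel is exactly the set of elements killed by all the $X_i$ (there is nothing in degree $n+1$, so this kernel \emph{is} $H_n$). This is a routine check that I would not belabor.

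Next I would record the evident recursion. Set $R_0 = R$ and $R_i = R/(X_1,\ldots, X_i)R$ for $i = 1,\ldots, n$, and put $N_i = (0 \colon_\M (X_1,\ldots, X_i)R)$, so $N_0 = \M$. An element of $\M$ is killed by $X_1,\ldots, X_i$ if and only if it is killed by $X_1,\ldots, X_{i-1}$ and also by $X_i$; hence $N_i = (0 \colon_{N_{i-1}} X_i)$, where $X_i$ denotes its image in $R_{i-1}$ acting on the $R_{i-1}$-module $N_{i-1}$ (note $N_{i-1}$ is annihilated by $(X_1,\ldots,X_{i-1})R$, so it is genuinely an $R_{i-1}$-module).

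The main induction is then the claim that $N_i$ is an $F_{R_i}$-finite module for each $i = 0,\ldots, n$. For $i = 0$ this is the hypothesis that $\M$ is $F_R$-finite. Assume $N_{i-1}$ is $F_{R_{i-1}}$-finite. By hypothesis $R_{i-1}$ is regular of characteristic $p > 0$; since $X_1,\ldots, X_n$ is an $R$-regular sequence, the image of $X_i$ is a nonzerodivisor on $R_{i-1}$; and $R_{i-1}/(X_i) = R_i$ is regular by hypothesis. Therefore Theorem \ref{r1}, applied with $R_{i-1}$ in place of $R$, the image of $X_i$ in place of $X$, and $N_{i-1}$ in place of $\M$, yields that $(0 \colon_{N_{i-1}} X_i)$ is $F_{R_i}$-finite; by the recursion this module is $N_i$. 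Taking $i = n$ and combining with the first display gives $H_n(X_1,\ldots, X_n; \M) = N_n$ is $F_S$-finite, where $S = R_n = R/(X_1,\ldots, X_n)R$.

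Because Theorem \ref{r1} is already in hand, I do not expect a genuine obstacle. The only points needing care are bookkeeping: that the module $N_{i-1}$ produced at stage $i-1$, together with the data $(R_{i-1}, \bar X_i, R_i)$, is exactly in the shape required to reapply Theorem \ref{r1} at stage $i$ (regular ambient ring, regular element, regular quotient), and the elementary identity $N_i = (0 \colon_{N_{i-1}} X_i)$. (One should also note that the $F_{R_i}$-finite structure on $N_i$ is the one supplied by Construction \ref{const}, which is what makes the iteration legitimate.)
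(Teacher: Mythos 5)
Your proposal is correct and is essentially the paper's own argument: the paper also inducts on $n$ and uses the isomorphism $H_n(X_1,\ldots,X_n;\M) \cong H_1(X_n, H_{n-1}(X_1,\ldots,X_{n-1};\M))$ together with Theorem \ref{r1}, which is exactly your recursion $N_n = (0\colon_{N_{n-1}} X_n)$ written in Koszul-homology notation. The bookkeeping you flag (that $R_{i-1}$ is regular, $\bar X_i$ is $R_{i-1}$-regular, and $R_i$ is regular, so Theorem \ref{r1} applies at each stage) is precisely what the paper's induction implicitly relies on.
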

\begin{proof}
  We prove the result by induction on $n$. When $n = 1$ the result follows from Theorem \ref{r1}. Assume the result holds for $n-1$, i.e., $H_{n-1}(X_1, \ldots, X_{n-1}; \M)$ is $F_T$-finite where
  $T =R/(X_1, \ldots, X_{n-1})R$.

  Note we have an isomorphism of $S$-modules
  \[
  H_n(X_1, \ldots, X_n; \M) \cong H_1(X_n, H_{n-1}(X_1, \ldots, X_{n-1}; \M)).
  \]
  We apply Theorem \ref{r1} to conclude.
\end{proof}

\section{Proof of Theorem \ref{main-koszul}}
In this section we prove Theorem \ref{main-koszul}. We restate it for the convenience of the reader.
\begin{theorem}\label{main-koszul-body}
Let $k$ be an infinite field of characteristic $p > 0$. Let $R$ be one of the following regular rings
\begin{enumerate}[\rm (i)]
  \item $k[Y_1, \ldots, Y_d]$.
  \item $k[[Y_1,\ldots, Y_d]]$.
  \item $A[X_1, \ldots, X_m]$ where $A = k[[Y_1,\ldots, Y_d]]$.
\end{enumerate}
Let $\M$ be a $F_R$-finite module. Fix $r \geq 1$. Then the Koszul homology   \\ modules $H_i(Y_1,\ldots, Y_r; \M)$ are
$F_{\ov{R}}$-finite  modules (where $\ov{R} = R/(Y_1,\ldots, Y_r)$) for $i = 0, \ldots,r$.
\end{theorem}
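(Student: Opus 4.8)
The plan is to reduce the general Koszul homology to the two cases we already control, namely $H_0$ (the quotient $\M/(Y_1,\dots,Y_r)\M$, handled by \cite[1.3,2.9]{Lyu-2}) and $H_r$ (the socle-type module, handled by Theorem \ref{top}), and then to fill in the middle range by a descending induction on $r$ together with some auxiliary trick to pass from a non-$F$-exact short exact sequence to an honest statement about $F$-finiteness. Concretely, I would first observe that $Y_1,\dots,Y_r$ is an $R$-regular sequence with each $R/(Y_1,\dots,Y_i)$ regular (this is where the three special forms of $R$ and the hypothesis that $k$ is infinite will be used: after a generic linear change of coordinates the $Y_i$ can be assumed to be part of a regular system of parameters, which is exactly what lets us split off variables one at a time). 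So Theorem \ref{r1} and Theorem \ref{top} apply at each stage.

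\textbf{The inductive scheme.} Write $\ov{R}=R/(Y_1,\dots,Y_r)$, $T=R/(Y_1)$, and let $\M$ be $F_R$-finite. Theorem \ref{r1} gives that $\M/Y_1\M$ and $(0:_\M Y_1)$ are $F_T$-finite. The Koszul complex on $Y_1,\dots,Y_r$ factors as the mapping cone of multiplication by $Y_1$ on the Koszul complex of $Y_2,\dots,Y_r$, which yields the standard long exact sequence
\[
\cdots \to H_i(Y_2,\dots,Y_r;\M)\xrightarrow{Y_1} H_i(Y_2,\dots,Y_r;\M)\to H_i(Y_1,\dots,Y_r;\M)\to H_{i-1}(Y_2,\dots,Y_r;\M)\to\cdots
\]
breaking into short exact sequences
\[
0\to H_i(Y_2,\dots,Y_r;\M)/Y_1\to H_i(Y_1,\dots,Y_r;\M)\to (0:_{H_{i-1}(Y_2,\dots,Y_r;\M)}Y_1)\to 0.
\]
If I knew, by induction on $r$, that every $H_j(Y_2,\dots,Y_r;\M)$ is $F_T$-finite, then by Theorem \ref{r1} applied over the regular ring $T$ (with the regular element $\ov{Y_1}$ — but wait, $\ov{Y_1}=0$ in $T$!). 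That is precisely the snag: $Y_1$ is already zero in $T$, so $H_j(Y_2,\dots,Y_r;\M)/Y_1 = H_j$ and $(0:_{H_j}Y_1)=H_j$ trivially, and one must instead be more careful about the bookkeeping of which ring each module lives over. The correct organization is: induct on $r$, but run the peeling in the \emph{other} order, removing $Y_r$ first while the module is still an $F_R$-module, i.e. use the short exact sequences
\[
0\to H_i(Y_1,\dots,Y_{r-1};\M)/Y_r\to H_i(Y_1,\dots,Y_r;\M)\to (0:_{H_{i-1}(Y_1,\dots,Y_{r-1};\M)}Y_r)\to 0
\]
where now $Y_r$ acts as an $R$-element. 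This still does not immediately help because $H_i(Y_1,\dots,Y_{r-1};\M)$ is only an $F_{R/(Y_1,\dots,Y_{r-1})}$-module, not an $F_R$-module, so $Y_r$ — an element of $R$ that maps to a genuine nonzerodivisor in $R/(Y_1,\dots,Y_{r-1})$ — does act, and Theorem \ref{r1} applies over the regular ring $R/(Y_1,\dots,Y_{r-1})$ with the regular element $\ov{Y_r}$. So the scheme is: by the inductive hypothesis each $H_i(Y_1,\dots,Y_{r-1};\M)$ is $F_{R'}$-finite where $R'=R/(Y_1,\dots,Y_{r-1})$; apply Theorem \ref{r1} over $R'$ to the regular element $\ov{Y_r}$ to conclude $H_i(Y_1,\dots,Y_{r-1};\M)/\ov{Y_r}$ and $(0:\ov{Y_r})$ are $F_{\ov{R}}$-finite; then try to deduce $F_{\ov{R}}$-finiteness of the middle term of the short exact sequence above.

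\textbf{The main obstacle.} The genuine difficulty — flagged by the author's own remark after Theorem \ref{r1} — is that the short exact sequence displayed above is a priori only a sequence of $\ov{R}$-modules, \emph{not} a sequence in the category of $F_{\ov{R}}$-modules, because the connecting map need not commute with the Frobenius structure morphisms; hence one cannot simply invoke closure of $F_{\ov{R}}$-finite modules under extensions. The plan to get around this is to avoid extensions altogether: instead of building $H_i(Y_1,\dots,Y_r;\M)$ from the two end terms, realize it directly as a subquotient of a Koszul homology of an $F$-finite module over a regular ring by a single regular element. The clean way to do this is the isomorphism used in the proof of Theorem \ref{top}: $H_i(Y_1,\dots,Y_r;\M)$ is the homology, in one spot, of the total complex of a double complex whose one row is the Koszul complex of $Y_r$ on the $F_{R'}$-finite modules $H_i(Y_1,\dots,Y_{r-1};\M)$; but homology of a mapping cone does not split $F$-equivariantly. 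I therefore expect the actual argument to proceed not through the long exact sequence at all, but by producing, for each $i$, an explicit generating morphism: take a root $\beta\colon M\to F(M)$ of $\M$ with $M$ finitely generated, form the Koszul complex $K_\bullet(Y_1,\dots,Y_r;M)$ of finitely generated modules, apply $F$ (which is exact on the polynomial/power series rings in question since they are regular, so $F$ is flat), and observe that $F(K_\bullet(Y_1,\dots,Y_r;M)) \cong K_\bullet(Y_1^p,\dots,Y_r^p;F(M))$; then compare with $K_\bullet(Y_1,\dots,Y_r;F(M))$ via the standard maps $K_\bullet(\mathbf{Y};-)\to K_\bullet(\mathbf{Y}^p;-)$ given by multiplication by $Y_j^{p-1}$ in each coordinate. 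This gives a compatible direct system whose limit is $H_i(Y_1,\dots,Y_r;\M)$ with its induced $F_{\ov R}$-structure, together with finitely generated ``levels'' $H_i(Y_1,\dots,Y_r;F^n(M))$ over $\ov R$, exhibiting it as $F_{\ov R}$-finite. Verifying that the transition maps in this system are exactly those of the limiting $F$-structure, and that each level module is finitely generated over $\ov R$ (which follows since $F^n(M)$ is finitely generated over $R$ and Koszul homology of a finitely generated module is finitely generated), is the technical heart; the infiniteness of $k$ enters only at the very start to arrange that $Y_1,\dots,Y_r$ really is a regular sequence cutting out a regular quotient, so that $F$ restricted to $\ov R = R/(Y_1,\dots,Y_r)$ makes sense and Lyubeznik's machinery (Remark \ref{lyu-w}) applies at every stage.
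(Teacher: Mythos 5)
Your proposal correctly diagnoses why the naive induction through the Koszul long exact sequence fails (the connecting maps are not a priori morphisms of $F$-modules), but the workaround you then propose has a genuine gap at exactly the point you defer as ``the technical heart,'' and that point is in fact the whole difficulty. Writing $\M=\varinjlim F^n(M)$ and using that Koszul homology commutes with direct limits does exhibit $H_i(\mathbf{Y};\M)$ as a direct limit of finitely generated $\ov R$-modules $H_i(\mathbf{Y};F^n(M))$ --- but that is not what $F_{\ov R}$-finiteness means. You must identify this system with one of the form $N_0\to F_{\ov R}(N_0)\to F_{\ov R}^2(N_0)\to\cdots$, which requires a natural isomorphism $H_i(\mathbf{Y};F_R(-))\cong F_{\ov R}(H_i(\mathbf{Y};-))$ so that Theorem \ref{formal} applies with $\eta=H_i(\mathbf{Y};-)$. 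By flatness of $F_R$ one only gets $F_R(H_i(\mathbf{Y};N))\cong H_i(Y_1^p,\ldots,Y_r^p;F_R(N))$, and the passage from the $\mathbf{Y}^p$-Koszul homology back to the $\mathbf{Y}$-Koszul homology via the comparison chain maps (which multiply by products of the $Y_j^{p-1}$ in the various exterior degrees) is not an isomorphism on intermediate homology; the paper constructs such an identification only for $i=0$ (Lyubeznik's $\pi_*$) and for the socle functor $(0:_{-}X)$ one variable at a time (Construction \ref{const} and Theorem \ref{top}). If your functorial isomorphism existed for all $i$, the theorem would be an immediate corollary of Theorem \ref{formal}, would hold over finite fields, and the author's Remark after Theorem \ref{r1} and all of Section 5 would be unnecessary. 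A further warning sign: you claim the infiniteness of $k$ is only needed to make $Y_1,\ldots,Y_r$ a regular sequence with regular quotients, but that is automatic since the $Y_i$ are variables; in the actual proof infiniteness is needed for prime avoidance, which your argument never uses.

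For comparison, the paper's proof is entirely different. It inducts on $r$: the case $i=r$ is Theorem \ref{top}; for $i<r$ one splits off $\N=\Gamma_{(Y_1,\ldots,Y_r)}(\M)$ and proves the key Lemma \ref{m-torsion} that $H_i(Y_1,\ldots,Y_r;\N)=0$ for $i<r$ (by localizing and completing to reduce to $k[[Y_1,\ldots,Y_r,Z_1,\ldots,Z_s]]$ and then using the $\D$-module structure and the divided-power operators $\partial_{[m]}$ to show $\N=Y\N$). Hence the lower Koszul homology of $\M$ agrees with that of $\ov\M=\M/\N$, whose associated primes avoid $(Y_1,\ldots,Y_r)$; since $k$ is infinite and $\Ass\ov\M$ is finite, a generic $k$-linear combination $Z_1$ of the $Y_j$ is $\ov\M$-regular, one changes variables, uses $H_*(Z_1,\ldots,Z_n;\ov\M)\cong H_*(Z_2,\ldots,Z_n;\ov\M/Z_1\ov\M)$ together with Theorem \ref{r1}, and concludes by the induction hypothesis applied to $n-1$ elements. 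You would need to either prove your commutation isomorphism for intermediate $i$ (and explain why it does not trivialize the paper) or adopt a torsion-splitting argument of this kind.
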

We prove the result by induction on $r$.  When $r = 1$ the result holds by Theorem \ref{r1}.
\begin{lemma}\label{base-koszul}
(with hypotheses as in \ref{main-koszul-body}). The result holds for $r = 1$. \qed
\end{lemma}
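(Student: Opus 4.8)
The plan is as follows. Lemma~\ref{base-koszul} itself is immediate: apply Theorem~\ref{r1} with $X = Y_1$. In each of the cases (i)--(iii) the element $Y_1$ is $R$-regular and $R/(Y_1)$ is again a regular ring of the same kind ($k[Y_2,\dots,Y_d]$, resp.\ $k[[Y_2,\dots,Y_d]]$, resp.\ $A'[X_1,\dots,X_m]$ with $A' = k[[Y_2,\dots,Y_d]]$); and $H_0(Y_1;\M) = \M/Y_1\M$, $H_1(Y_1;\M) = (0\colon_\M Y_1)$, which Theorem~\ref{r1} asserts are $F_{R/(Y_1)}$-finite. The interesting part of this section is the inductive step $r-1 \Rightarrow r$ for Theorem~\ref{main-koszul-body}, and I would carry that out as follows.

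Write $R^{(j)} = R/(Y_1,\dots,Y_j)$ and $N_i = H_i(Y_1,\dots,Y_{r-1};\M)$; by the inductive hypothesis each $N_i$ is an $F_{R^{(r-1)}}$-finite module. Since $Y_1,\dots,Y_r$ is an $R$-regular sequence, $H_i(Y_1,\dots,Y_r;\M) = \operatorname{Tor}^R_i(\ov R,\M)$, and the change-of-rings spectral sequence for $R \to R^{(r-1)} \to R^{(r)} = \ov R$ — which has only the columns $p = 0,1$ because $Y_r$ is $R^{(r-1)}$-regular — collapses to short exact sequences
\[
0 \longrightarrow N_i/Y_r N_i \longrightarrow H_i(Y_1,\dots,Y_r;\M) \longrightarrow (0 \colon_{N_{i-1}} Y_r) \longrightarrow 0 .
\]
Applying Theorem~\ref{r1} this time over the regular ring $R^{(r-1)}$ with its regular element $Y_r$, both outer terms are $F_{\ov R}$-finite, so it would be enough to know that this sequence lives in the category of $F_{\ov R}$-modules.

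That last point is exactly the obstruction already noted: a priori the displayed sequence is only exact as a sequence of $\ov R$-modules, and it is not clear it is $F_{\ov R}$-linear, so one cannot simply invoke closure of $F_{\ov R}$-finite modules under extensions. The first thing I would try is to construct an $F_{\ov R}$-structure on $H_i(Y_1,\dots,Y_r;\M)$ directly from $\theta_\M$, using the analogue — for $\ov R = R/(Y_1,\dots,Y_r)$ and for higher $\operatorname{Tor}$ — of the canonical, $\omega^{p-1}$-twisted comparison isomorphism $\delta$ of Remark~\ref{lyu-w}, and then check that the whole base-change machinery, hence the collapsed spectral sequence, is compatible with the structure morphisms once the twist is inserted.

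If that resists, the alternative route is the one the hypotheses suggest. Since $k$ is infinite, $GL_d(k)$ acts transitively on ordered $r$-tuples of linearly independent linear forms in $R$, and the property ``$H_i(\ell_1,\dots,\ell_r;\M)$ is $F_{R/(\ell_1,\dots,\ell_r)}$-finite for $i = 0,\dots,r$'' is invariant under a linear change of the $Y$'s (which preserves $F_R$-finiteness); so it suffices to prove the assertion for one suitably generic $r$-tuple $\ell_1,\dots,\ell_r$. For a generic choice one would try to make $\ell_1,\dots,\ell_r$ behave well on a root of $\M$ — controlling or killing the lower Koszul homologies — and thereby reduce to the top-homology statement already established in Theorem~\ref{top}. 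In either route the genuinely hard step is the one at the end: verifying $\theta$-compatibility of the spectral sequence, or extracting enough from a generic configuration.
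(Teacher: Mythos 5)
Your proof of the lemma itself is exactly the paper's: the $r=1$ case is an immediate application of Theorem~\ref{r1} with $X = Y_1$, since in each of cases (i)--(iii) the element $Y_1$ is $R$-regular with $R/(Y_1)$ regular, and $H_0(Y_1;\M)=\M/Y_1\M$, $H_1(Y_1;\M)=(0\colon_\M Y_1)$. The rest of your discussion concerns the inductive step of Theorem~\ref{main-koszul-body}, which is not part of this lemma; for the record, the paper's induction avoids the spectral-sequence/$F$-linearity issue you flag by first killing the $(Y_1,\ldots,Y_n)$-torsion via Lemma~\ref{m-torsion} and then using a generic linear change of variables to produce an $\ov{\M}$-regular element, but that lies outside the statement under review.
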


The following result is a crucial ingredient to prove Theorem \ref{main-koszul-body}.
\begin{lemma}\label{m-torsion}
(with hypotheses as in \ref{main-koszul-body}). Set $\N = \Gamma_{(Y_1,\ldots, Y_r)}(\M)$. Then \\ $H_i(Y_1,\ldots, Y_r; \N) = 0$ for $i < r$.
\end{lemma}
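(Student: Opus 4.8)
The plan is to reduce to the case where $\N$ is itself an $F_R$-finite module which is $\aF$-torsion (with $\aF := (Y_1,\ldots,Y_r)$), and then induct on $r$; the base case $r=1$ is the crux. For the reduction, fix a generating morphism $\beta\colon M_0 \rt F_R(M_0)$ of $\M$ with $M_0$ finitely generated, so $\M = \varinjlim_e F_R^e(M_0)$. Since $\Gamma_\aF$ commutes with direct limits, and since $\Gamma_\aF(F_R^e(L)) = F_R^e(\Gamma_\aF(L))$ for every finitely generated $L$ — this follows by applying the exact functor $F_R^e$ to $0 \rt \Gamma_\aF(L) \rt L \rt L/\Gamma_\aF(L) \rt 0$ and noting that $\aF$ contains an element regular on $L/\Gamma_\aF(L)$, whose $p^e$-th power is then regular on $F_R^e(L/\Gamma_\aF(L))$, while $F_R^e(\Gamma_\aF(L))$ is $\aF$-torsion — one gets $\N = \varinjlim_e F_R^e(N_0)$ with $N_0 := \Gamma_\aF(M_0)$ finitely generated and $\aF$-torsion. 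The structure isomorphism of $\M$ restricts to $\N$, so $\N$ is an $F_R$-finite module with generating module $N_0$; being $\aF$-torsion it is also torsion with respect to any subideal generated by a subset of $Y_1,\ldots,Y_r$.

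\emph{Base case $r=1$.} Here one must show $\N/Y_1\N = 0$. Choose $k \geq 1$ with $Y_1^k N_0 = 0$ and a presentation $N_0 = \operatorname{coker}\bigl(R^{b+a'} \xrightarrow{[\,Y_1^k I_b \mid A'\,]} R^b\bigr)$ (possible since the columns $Y_1^k e_i$ are relations). Applying $F_R^c$ and then $- \otimes_R \ov{R}$ gives $F_R^c(N_0)/Y_1 F_R^c(N_0) = F_{\ov{R}}^c(V_0)$ with $V_0 := N_0/Y_1 N_0 = \operatorname{coker}(\ov{R}^{a'} \xrightarrow{\ov{A'}} \ov{R}^b)$. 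The key point is that for every $c$ with $p^c > k$ one has
\[
\bigl(0 :_{F_R^c(N_0)} Y_1^k\bigr) \ \sub \ Y_1\, F_R^c(N_0).
\]
To see this, write each entry $g$ of $A'$ as $g = \hat g + Y_1 h$ with $\hat g \in \ov{R}$ (using $R = \ov{R} \oplus Y_1 R$, valid in cases (i)--(iii)); then $g^{p^c} = \hat g^{p^c} + Y_1^{p^c} h^{p^c}$, so $(A')^{[p^c]} \equiv \ov{A'}^{[p^c]} \pmod{Y_1^{k+1}}$. If $v \in R^b$ satisfies $Y_1^k v \in (A')^{[p^c]} R^{a'} + Y_1^{kp^c} R^b$, reduce this relation modulo $Y_1^{k+1}$ and compare $Y_1$-adic coefficients in $R/Y_1^{k+1}R = \bigoplus_{l=0}^{k} Y_1^l \ov{R}$: this forces the reduction $v_0 \in \ov{R}^b$ of $v$ to lie in $\ov{A'}^{[p^c]} \ov{R}^{a'}$, i.e. the image of $v$ in $F_{\ov{R}}^c(V_0) = \ov{R}^b / \ov{A'}^{[p^c]} \ov{R}^{a'}$ is zero, which is precisely the statement that the class of $v$ in $F_R^c(N_0)$ lies in $Y_1 F_R^c(N_0)$. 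Now the $c$-fold composite $N_0 \rt F_R^c(N_0)$ of the (restricted) generating morphism has image killed by $Y_1^k$, hence contained in $Y_1 F_R^c(N_0)$; so the induced map $V_0 \rt F_{\ov{R}}^c(V_0)$ is zero, and therefore $\N/Y_1\N = \varinjlim_c F_{\ov{R}}^c(V_0) = 0$.

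\emph{Inductive step $r \geq 2$.} Set $\aF'' := (Y_1,\ldots,Y_{r-1})$. As $\N$ is $\aF''$-torsion and $F_R$-finite, the induction hypothesis gives $H_i(Y_1,\ldots,Y_{r-1};\N) = 0$ for $i < r-1$. Splitting the variable $Y_r$ off the Koszul complex gives, for each $i$, a short exact sequence
\[
0 \rt \frac{H_i(Y_1,\ldots,Y_{r-1};\N)}{Y_r\, H_i(Y_1,\ldots,Y_{r-1};\N)} \rt H_i(Y_1,\ldots,Y_r;\N) \rt \bigl(0 :_{H_{i-1}(Y_1,\ldots,Y_{r-1};\N)} Y_r\bigr) \rt 0 .
\]
For $i < r-1$ both outer terms vanish. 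For $i = r-1$ the right-hand term vanishes (since $H_{r-2}(Y_1,\ldots,Y_{r-1};\N) = 0$), whence $H_{r-1}(Y_1,\ldots,Y_r;\N) \cong \wt{\N}/Y_r\wt{\N}$ where $\wt{\N} := H_{r-1}(Y_1,\ldots,Y_{r-1};\N) = (0 :_\N \aF'') = \Hom_R(\wt{R},\N)$ and $\wt{R} := R/\aF''$. Iterating Theorem \ref{r1} (the $(0 :_\M X)$ assertion) shows $\wt{\N}$ is $F_{\wt{R}}$-finite; it is also $(Y_r)$-torsion, i.e. $\wt{\N} = \Gamma_{(Y_r)}(\wt{\N})$, and $\wt{R}$ is again a ring of one of the types (i)--(iii) with $Y_r$ a regular parameter on it, so the base case applied over $\wt{R}$ yields $\wt{\N}/Y_r\wt{\N} = 0$. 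Hence $H_i(Y_1,\ldots,Y_r;\N) = 0$ for all $i < r$, completing the induction.

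The only step that I expect to require genuine work is the base case, and within it the $Y_1$-adic/Frobenius comparison $(0 :_{F_R^c(N_0)} Y_1^k) \sub Y_1 F_R^c(N_0)$ for $p^c > k$; the inductive step and the initial reduction are formal manipulations with Koszul long exact sequences and iterated applications of Theorem \ref{r1}.
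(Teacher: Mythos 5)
Your proof is correct, and while the inductive step is essentially the paper's (the same Koszul splitting, the identification $H_{r-1}(\bY';\N) = (0\colon_\N \aF'')$, iterated use of Theorem \ref{r1}, and the base case applied over $R/\aF''$), your treatment of the base case $r=1$ is genuinely different. The paper first reduces to the complete local case $R = \kappa[[Y_1,\ldots,Y_r,Z_1,\ldots,Z_s]]$ by localizing at maximal ideals containing $(Y_1,\ldots,Y_r)$ and completing, and then invokes the $\D$-module structure of an $F$-finite module (\cite[5.1]{Lyu-2}) together with the divided-power identity $\partial_{[c]}Y^c = \sum_{i}\binom{c}{i}Y^{c-i}\partial_{[c-i]}$ applied to $Y^ct = 0$ to conclude $t \in Y\N$, hence $\N = Y\N$. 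You instead first pass to a finitely generated, $\aF$-torsion generating module $N_0$ (your argument that $\Gamma_\aF$ commutes with $F_R^e$ on finitely generated modules is fine: $F_R^e(\Gamma_\aF(L))$ is $\aF$-torsion and $F_R^e(L/\Gamma_\aF(L))$ admits the regular element $x^{p^e}$), and then run a direct Peskine--Szpiro-style computation on a presentation matrix containing the columns $Y_1^kI_b$: since each entry $g$ satisfies $g^{p^c}\equiv \hat g^{p^c} \pmod{Y_1^{p^c}}$, taking $p^c>k$ makes the Frobenius power "swamp" the $Y_1^k$-torsion, giving $(0\colon_{F^c(N_0)}Y_1^k)\subseteq Y_1F^c(N_0)$ and hence the vanishing of the transition maps on $-/Y_1(-)$. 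Your route avoids both the reduction to the complete local case and the $\D$-module machinery, and works uniformly for all three ring types (using only the splitting $R = \ov R\oplus Y_1R$); the paper's route is shorter once Lyubeznik's $\D$-module results are granted, and the same divided-power computation is what yields the characteristic-zero analogue (Lemma \ref{m-torsion-char 0}), which your Frobenius argument does not give. Both arguments are sound.
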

We now give a proof of Theorem \ref{main-koszul-body} assuming Lemma \ref{m-torsion}.
\begin{proof}[Proof of Theorem \ref{main-koszul-body}]
We prove the result by induction on $r$. The base case $r = 1$ is dealt with in Lemma \ref{base-koszul}.
Assume the result for $r = n -1 \geq 1$. We prove the result for $r = n$.

The case when $i = n$ follows from Theorem\ref{top}.

We have an exact sequence of $F_R$-finite modules:
$$0 \rt \Gamma_{(Y_1,\ldots, Y_n)}(\M) \rt \M \rt \ov{\M} \rt 0.$$

Case 1. $\ov{\M} = 0$.

In this case we have $\Gamma_{(Y_1,\ldots, Y_n)}(\M) = \M$.  By Lemma \ref{m-torsion} it follows that \\ $H_i(Y_1, \ldots, Y_n; \M ) = 0$ for  $i < n$.  Thus in this case we have the result.

Case 2. $\ov{\M} \neq 0$.

We note that by our construction $H_i(Y_1,\ldots, Y_n, \M) = H_i(Y_1,\ldots, Y_n, \ov{\M})$ for $i < n$. Thus it suffices to prove the result for $\ov{\M}$.

As $\ov{\M}$ is $F_R$-finite we have that $\Ass_R \ov{\M}$ is a finite set. If $P \in \Ass_R \ov{\M}$ then note  $P \nsupseteq (Y_1, \ldots, Y_n)$.
Let $V = kY_1\oplus \cdots \oplus kY_n$ be the  finite-dimensional $k$-vector subspace of $R$  with basis $Y_1, \cdots, Y_n$. We have
$P\cap V$ is a proper subspace of $V$ for every $P \in \Ass_R \ov{\M}$. As $k$ is infinite and as $\Ass_R \ov{\M}$ is a finite set there exists
$$ Z \in V \setminus \bigcup_{P \in \Ass_R \ov{\M}} P\cap V. $$
Say $Z = \sum_{i = 1}^{n} \alpha_i Y_i$ with $\alpha_i \in k$ and some $\alpha_j \neq 0$. Without loss of generality we can assume $\alpha_1 \neq 0$.
We consider the change of variables,
\begin{align*}
  Z_1 &=  Z, \ \text{and} \\
  Z_j &= Y_j \ \text{for $j > 1$}.
\end{align*}
Then note $H_*(Z_1, \ldots, Z_n, -) \cong H_*(Y_1, \ldots, Y_n, -)$. Furthermore we have
\begin{enumerate}
  \item if $R = k[Y_1, \ldots, Y_d]$ then $R = k[Z_1, \ldots, Z_d]$. Also $R/Z_1R = k[Z_2, \ldots, Z_d]$.
  \item if $R = k[[Y_1, \ldots, Y_d]]$ then $R = k[[Z_1, \ldots, Z_d]]$. Also \\ $R/Z_1R = k[[Z_2, \ldots, Z_d]]$.
  \item if $R = A[X_1, \dots, X_m]$ where $A = k[[Y_1, \ldots, Y_d]]$ then \\  $R/Z_1R = B[X_1, \dots, X_m]$ where $B = k[[Z_2, \ldots, Z_d]]$.
\end{enumerate}
We note that $Z_1$ is $\ov{\M}$-regular. So we have an isomorphism
$$ H_*(Z_1, \ldots, Z_n, \ov{\M})  \cong H_*(Z_2, \ldots, Z_n, \ov{\M}/Z_1\ov{\M}). $$
By \ref{r1},  $\ov{\M}/Z_1\ov{\M}$ is $F_{R/Z_1R}$-finite. By induction hypotheses we get that \\ $H_*(Z_2, \ldots, Z_n, \ov{\M}/Z_1\ov{\M})$ are $F_S$-finite
where
$$S = R/(Z_1,\ldots, Z_n) = R/(Y_1, \ldots, Y_n).$$
 The result follows.
\end{proof}
We now give:
\begin{proof}[Proof pf Lemma \ref{m-torsion}]
Claim-1: We may reduce to the case when \\
$R = k[[Y_1, \ldots, Y_r, Z_1, \ldots, Z_s]]$.

Fix $i < r$. Let $E = H_i(Y_1, \ldots, Y_r, \N)$. Then notice $E$ is $(Y_1, \ldots, Y_r)$-torsion. Then $E$ is zero if and only if $L = E\otimes_{R} R_\m = 0$ for all maximal ideals $\m \supseteq (Y_1, \ldots, Y_r)$. Note $L_\m =  H_i(Y_1, \ldots, Y_r, \N_\m)$. By
\cite[1.3, 2.9]{Lyu-2} we have that $\N_m$ is a $F_{R_\m}$-finite.  Furthermore $L$ is zero if and only if $L\otimes_{R_\m} \wh{R_\m} = 0$ where $\wh{R_\m}$ is the completion of $R_\m$. Notice $L\otimes_{R_\m} \wh{R_\m} = H_i(Y_1, \ldots, Y_r, (\N_\m)\otimes_{R_\m} \wh{R_\m})$. By  \cite[1.3, 2.9]{Lyu-2} we have that $\N_m \otimes_{R_\m} \wh{R_\m}$ is a $F_{\wh{R_\m}}$-finite.  We note that $R/(Y_1, \ldots, Y_r)$ is regular. So
$\wh{R_\m}/(Y_1, \ldots, Y_r)\wh{R_\m}$ is regular. Thus $Y_1, \ldots, Y_r$ is part of a regular system of parameters of the complete regular ring $\wh{R_\m}$. It follows that \\
$\wh{R_\m} \cong \kappa(\m)[[Y_1, \ldots, Y_r, Z_1, \ldots, Z_s]]$ for some $Z_1,\ldots, Z_s$.
Thus Claim-1 is true.

Thus $R = k[[Y_1, \ldots, Y_r, Z_1,\ldots, Z_s]]$. We prove our result by induction on $r$.
We first consider the case when $r = 1$.  For convenience put $Y =  Y_1$. Let $\D$ be the ring of $k$-linear differential operators on $R$. By
\cite[5.1]{Lyu-2} $\N$ is a $\D$-module.
Let $t \in N$. Then $Y^ct = 0$ for some $c \geq 1$. Put
$$ \partial_{[m]} = \frac{1}{m!}\frac{\partial^m}{\partial Y^m}. $$
For all $m \geq 1$ we have by \cite[2.2]{Lyu-3},
$$  \partial_{[m]} Y^m = \sum_{i= 0}^{m} \binom{m}{i}Y^{m-i}\partial_{[m-i]}.$$
So we have
$$ 0 = \partial_{[c]}Y^c t =  \big{ \{}\sum_{i= 0}^{c-1} \binom{c}{i}Y^{c-i}\partial_{[c-i]}t \big{ \} } + t.$$
It follows that $t \in Y\N$. Thus $\N = Y \N$. So $H_0(Y, \N) = 0$.

Now assume $r \geq 2$ and the result has be proved for $r-1$. Set $\bY = Y_1, \ldots, Y_r$ and $\bY^\prime = Y_1, \ldots, Y_{r-1}$.
We have an exact sequence for all $i$
\[
0 \rt H_0(Y_r, H_i(\bY^\prime, \N)) \rt H_i(\bY, \N) \rt H_1(Y_r, H_{i-1}( \bY^\prime,  \N)) \rt 0.
\]
By our induction hypotheses $H_i(\bY^\prime, \N) = 0$ for $i < r-1$. So $H_i(\bY, \N) = 0$ for $i < r -1$ and
$$ H_{r-1}(\bY, \N) =  H_0(Y_r, H_{r -1}(\bY^\prime, \N)). $$
Note that  $E = H_{r-1}(\bY^\prime, \N))$ is a $F_S$-finite module where $S = R/\bY^\prime R$. Also it is easy to check that $\Gamma_{Y_r}(E) = E$. So by our case when $r = 1$ we get
$H_0(Y_r, E) = 0$. Thus $ H_{r-1}(\bY, \N) =  0$. Our result follows by induction.
\end{proof}
We note that Lemma \ref{m-torsion} is essentially characteristic free. So we have
\begin{lemma}\label{m-torsion-char 0}
Let $k$ be  field of characteristic $ 0$. Let $R$ be one of the following regular rings
\begin{enumerate}[\rm (i)]
  \item $k[Y_1, \ldots, Y_d]$.
  \item $k[[Y_1,\ldots, Y_d]]$.
  \item $A[X_1, \ldots, X_m]$ where $A = k[[Y_1,\ldots, Y_d]]$.
\end{enumerate}
Let $\FF$ be a Lyubeznik functor on $Mod(R)$ and let $\M = \FF(R)$. In case (i) or (ii) let $\M$  be any holonomic $\D$-module (here $\D$ is the ring of $k$-linear differential operators on $R$). Fix $r \geq 1$. Set $\N = \Gamma_{(Y_1,\ldots, Y_r)}(\M)$. Then  $H_i(Y_1,\ldots, Y_r; \N) = 0$ for $i < r$.
\end{lemma}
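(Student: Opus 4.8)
The plan is to mimic the proof of Lemma \ref{m-torsion} with the only change being the source of the $\mathcal{D}$-module structure and the reduction step. First I would observe that in all three cases $\M$ is a holonomic $\mathcal{D}$-module, where $\mathcal{D}$ is the ring of $k$-linear differential operators on $R$: in cases (i) and (ii) this is the hypothesis, while for a Lyubeznik functor $\FF$ applied to $R$ one knows $\FF(R)$ is a $\mathcal{D}$-module (this is the characteristic-zero analogue of \cite[5.1]{Lyu-2}, and holonomicity of $\FF(R)$ is classical; but in fact, as the reader will see, holonomicity is not needed — only the $\mathcal{D}$-module structure is used). The subtlety is case (iii), where $R = A[X_1,\ldots,X_m]$ with $A = k[[Y_1,\ldots,Y_d]]$ is not itself a polynomial or power-series ring over $k$; here the $Y_i$'s nevertheless lie in $A$, so the operators $\partial/\partial Y_i$ make sense as $k$-linear differential operators on $R$, and one checks $\N = \Gamma_{(Y_1,\ldots,Y_r)}(\M)$ is stable under them.

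Next I would carry out the reduction to the local complete case exactly as in Claim-1 of the proof of Lemma \ref{m-torsion}: fixing $i < r$ and $E = H_i(Y_1,\ldots,Y_r;\N)$, which is $(Y_1,\ldots,Y_r)$-torsion, vanishing of $E$ is detected after localizing at a maximal ideal $\m \supseteq (Y_1,\ldots,Y_r)$ and completing, and the formation of $\N$ and of Koszul homology commutes with these flat base changes. This puts us in the situation $R = \kappa(\m)[[Y_1,\ldots,Y_r,Z_1,\ldots,Z_s]]$, where now $\N_\m \otimes \wh{R_\m}$ is again a module over the corresponding ring of differential operators (one uses that $Y_1,\ldots,Y_r$ is part of a regular system of parameters and the behaviour of $\mathcal{D}$-module structures under flat base change, or simply re-derives the $\mathcal{D}$-structure from the Lyubeznik-functor / holonomic description after base change).

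With $R = k'[[Y_1,\ldots,Y_r,Z_1,\ldots,Z_s]]$ in hand (writing $k'$ for the new coefficient field), the argument is purely formal and identical to the one already given: for $r = 1$, writing $Y = Y_1$, for $t \in \N$ with $Y^c t = 0$ one applies the divided-power operator $\partial_{[c]} = \tfrac{1}{c!}\partial^c/\partial Y^c$ — which in characteristic zero is literally $\tfrac{1}{c!}\partial^c/\partial Y^c \in \mathcal{D}$ — and the Leibniz-type identity $\partial_{[c]}Y^c = \sum_{i=0}^{c}\binom{c}{i}Y^{c-i}\partial_{[c-i]}$ gives $t \in Y\N$, so $\N = Y\N$ and $H_0(Y,\N) = 0$. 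For $r \geq 2$ one uses the standard short exact sequences
\[
0 \rt H_0(Y_r, H_i(\bY', \N)) \rt H_i(\bY, \N) \rt H_1(Y_r, H_{i-1}(\bY', \N)) \rt 0
\]
with $\bY' = Y_1,\ldots,Y_{r-1}$, the induction hypothesis giving $H_i(\bY',\N) = 0$ for $i < r-1$, and then the $r=1$ case applied to $E = H_{r-1}(\bY',\N)$, noting $\Gamma_{Y_r}(E) = E$. The only point requiring care — and the main (mild) obstacle — is justifying that in case (iii) and after the completion step the relevant module still carries a $\mathcal{D}$-module structure for which the divided-power operators in the $Y_i$ act; once that bookkeeping is in place, the rest is the verbatim repetition of Lemma \ref{m-torsion}'s proof, so I would simply say so rather than rewrite it.
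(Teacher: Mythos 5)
Your proposal is correct and follows exactly the route the paper takes: the paper offers no separate argument for Lemma \ref{m-torsion-char 0} beyond the remark that the proof of Lemma \ref{m-torsion} is "essentially characteristic free," and your plan is precisely that proof — reduction to the complete local case, the divided-power operator $\partial_{[c]}$ argument for $r=1$ (which in characteristic zero is literally $\tfrac{1}{c!}\partial^c/\partial Y^c$), and the Koszul short exact sequences for the induction. Your added care about where the $\mathcal{D}$-module structure comes from in case (iii) and after completion is a reasonable elaboration of what the paper leaves implicit.
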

\begin{remark}
\label{grading} We can consider graded analogue of Theorem \ref{main-koszul-body}. For this let
\begin{enumerate}
  \item $R = k[Y_1,\ldots, Y_d]$ where $\deg Y_i = 1$ for all $i$.
  \item $R = A[X_1,\ldots, X_m]$ where $A = k[[Y_1, \ldots, Y_d]]$. Consider $R$ graded with $\deg A = 0$ and $\deg X_i = 1$ for all $i$.
\end{enumerate}
Let $\M$ be a graded $F_R$-finite module. Then $H_*(Y_1, \ldots, Y_r; \M)$ are graded $F_S$-finite modules where $S = R/(Y_1,\ldots, Y_r)$.

To see this observe the graded analogue of Theorems \ref{formal} holds. Rest of the arguments are similar. Note we only make homogeneous change of variables.
\end{remark}

\section{Proof of Corollary \ref{cor-koszul} and \ref{cor-graded}}
In this section we give proofs of Corollary \ref{cor-koszul} and \ref{cor-graded} which are needed for our applications. We first prove
\begin{proof}[Proof of Corollary \ref{cor-koszul} ]
Fix $i$. Set $\N = H_i(X_1,\ldots, X_d; \M)$. Then $\N$ is  $F_k$-finite.

As $\N$ is $F_k$-finite it has a root say $U$ which is a finite dimensional $k$-vector space.
We have chain of injective $k$-linear maps
\[
U \hookrightarrow F(U) \hookrightarrow F^2(U) \hookrightarrow \cdots
\]
 with $\N$ being the direct limit. But $U$ is a finite dimensional $k$-vector space. So $U = k^r$ for some $r$. Then $F(U) \cong k^r$. As $U \hookrightarrow F(U)$ is an inclusion we get that it is an isomorphism as they are vector spaces of same dimension. Similarly we can show that  each map in the above inductive system is an isomorphism. So $\N \cong U = k^r$. Thus $\N$ is a finite dimensional $k$-vector space.
\end{proof}

\s\label{MaZ}  Let $R = k[X_1,\ldots, X_d]$ be standard graded. Let $\N$ be a graded $F_R$-finite module. Suppose $\N$ is supported only at $(X_1,\ldots, X_m)$.
So it is a finite direct sum of $E(k)$ possibly with some shifts (here $E(k)$ is the graded injective hull of $k$).  By \cite[5.6]{MaZhang} all the socle elements are in degree $-d$. So $\Gamma_\m (\M) = E^s(d)$ for some $s \geq 0$.

Next we give a proof of
\begin{proof}
[Proof of Corollary \ref{cor-graded}]
Set $\m = (X_1, \ldots, X_d)$.
We have an exact sequence
\[
0 \rt \Gamma_\m(\M) \rt \M \rt \ov{\M} \rt 0.
\]
$\Gamma_\m (\M)$ is a  graded $F_R$-finite module supported only at the graded maximal ideal $\m$ of $R$.  So $\Gamma_\m (\M) = E^s(d)$. It is well-known and easy to prove that
\begin{align*}
  H_i(X_1, \ldots, X_d; E(d)) &= 0 \ \text{for} \ i < d \\
    &=k  \ \text{(concentrated in degree zero)} \ \text{for} \ i = d.
\end{align*}
As in the proof of Theorem \ref{main-koszul-body} we have $H_d(X_1, \ldots, X_d; \ov{\M}) = 0$. Thus \\  $H_d(X_1, \ldots, X_d; \M)$ is concentrated in degree  zero.
We also have \\  $H_i(X_1, \ldots, X_d; \M) \cong H_i(X_1,\ldots, X_d, \ov{\M})$ for $i< d$.

As in the proof of Theorem \ref{main-koszul-body} we have a linear homogeneous change of variables $Z_1, \ldots, Z_d$ such that
\[
 H_*(Z_1, \ldots, Z_d; \ov{\M})  \cong H_*(Z_2, \ldots, Z_d; \ov{\M}/Z_1\ov{\M}).
\]
Thus we can reduce to the case when $d = 1$.

Set $X =  X_1$. As $\M$ is a graded $F_\R$-finite module, it is Eulerain, see \cite[4.4]{MaZhang} for this notion.
For an integer $a$ and a non-negative integer $b$ define
\[
\binom{a}{b} = \frac{a(a-1)\cdots(a -b + 1)}{b!}.
\]
Suppose $m \in \M$ is homogeneous of degree $|m| \neq 0$. Say $|m| = p^st$ where $p$ does not divide $t$.
Let
 $$\mathcal{E}_{p^s} = X^{p^s} \partial_{p^s} \quad \text{where} \ \partial_{p^s} = \frac{1}{p^s}\frac{\partial^{p^s}}{\partial X^{p^s}}.  $$
 As $\M$ is Eulerian we have
 $$ \mathcal{E}_{p^s}m = \binom{p^st}{p^s}m. $$
 It can be readily verified that $p$ does not divide $\binom{p^st}{p^s}$. It follows that $m \in X \M$. Thus $H_0(X, \M) = \M/X\M$ is concentrated in degree zero.
\end{proof}

\section{The case when $A=k$ an infinite field of characteristic $p > 0$}
In this section we prove Thoerems \ref{vanish}, \ref{tame}, \ref{bass-basic}, \ref{bass-m-one} and \ref{bass-basic} when $A = k$ is an infinite field of characteristic $p>0$.
\begin{theorem}\label{vanishing-std}
  Let $R = k[X_1,\ldots, X_m]$ be standard graded with $k$ an infinite field of characteristic $p > 0$. Let $\M = \bigoplus_{n \in \Z} \M_n$ be a graded $F_R$-finite module.
   If $\M_n = 0$ for all $|n| \gg 0$ then $\M = 0$.
\end{theorem}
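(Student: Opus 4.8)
\textbf{Proof proposal for Theorem \ref{vanishing-std}.}

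The plan is to reduce the statement to Corollary \ref{cor-graded} by a Koszul-homology argument on the variables $X_1,\ldots, X_m$. Set $\n = (X_1,\ldots, X_m)$. The key observation is that if $\M_n = 0$ for all $|n| \gg 0$, then $\M$ is a graded module whose degrees are bounded in both directions, so it is supported only at $\n$; more precisely $\M = \Gamma_\n(\M)$ because every homogeneous element lies in a bounded range of degrees and is therefore killed by a power of $\n$. Hence $\M$ is a graded $F_R$-finite module supported only at $\n$, and by the discussion in \ref{MaZ} it is a finite direct sum of shifted copies of $E(k)$, concentrated (for the socle) in degree $-m$: $\M \cong E^s(m)$ for some $s \geq 0$. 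The goal is to force $s = 0$.

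To do this I would use that the top Koszul homology detects the module. First I would compute, as recalled in the proof of Corollary \ref{cor-graded}, that $H_m(X_1,\ldots, X_m; E(m)) = k$ concentrated in degree $0$, and $H_i = 0$ for $i < m$. Therefore $H_m(X_1,\ldots,X_m;\M) \cong k^s$ concentrated in degree zero. On the other hand, by Corollary \ref{cor-graded}, $H_m(X_1,\ldots,X_m;\M)$ is concentrated in degree zero as well — which gives no contradiction by itself — so instead I would argue directly: since $\M = \Gamma_\n(\M) = E^s(m)$, we have $\M/\n\M \neq 0$ whenever $s > 0$ (indeed $E(m)/\n E(m) \cong k(m)$, one-dimensional in degree $-m$). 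But $H_0(X_1,\ldots,X_m;\M) = \M/\n\M$, and by Corollary \ref{cor-graded} this is concentrated in degree zero, whereas $\M/\n\M \cong k^s(m)$ lives in degree $-m \neq 0$ (using $m \geq 1$). Hence $\M/\n\M = 0$, which forces $s = 0$, i.e.\ $\M = 0$.

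An alternative, perhaps cleaner, route avoids the structure theorem: directly, $\Gamma_\n(\M) = \M$ is $F_R$-finite, and by Corollary \ref{cor-graded} all the Koszul homology modules $H_i(X_1,\ldots,X_m;\M)$ are concentrated in degree zero; but for a module supported only at $\n$ whose nonzero degrees are bounded, one can locate the extreme nonzero graded piece of $\M$ and show it contributes to $H_0$ or $H_m$ in a nonzero degree unless $\M = 0$. I expect the main obstacle to be the bookkeeping of exactly which degree the top (and bottom) Koszul homology of a shifted injective hull sits in — i.e.\ pinning down the shift so that "concentrated in degree zero" genuinely contradicts nonvanishing; this is where the hypothesis $m \geq 1$ and the socle-degree computation from \cite[5.6]{MaZhang} (as used in \ref{MaZ}) are essential. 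Once that degree bookkeeping is correct, the argument closes immediately.
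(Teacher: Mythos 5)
Your reduction is correct up to the last step, and it is a genuinely different (and in principle shorter) route than the paper's. The observation that the hypothesis forces $\M = \Gamma_{(X_1,\ldots,X_m)}(\M)$ is right: every homogeneous element of a module with only finitely many nonzero graded components is killed by a power of $\n = (X_1,\ldots,X_m)$, so \ref{MaZ} applies to all of $\M$ and gives $\M \cong E(k)(m)^s$. The paper never uses this; it instead pins the top nonzero degree at $0$ via the socle and Corollary \ref{cor-graded}, shows separately that $\Gamma_\n(\M) = 0$, chooses an $\M$-regular linear form after a change of variables, and inducts on $m$.

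However, your concluding contradiction fails. The claim that $E(m)/\n E(m) \cong k(m)$ is one-dimensional in degree $-m$ is false: $E = E(k) \cong H^m_\n(R)$ is divisible by each $X_i$ (already for $m=1$, $E = k[X,X^{-1}]/k[X]$ and $X \cdot X^{-a-1} = X^{-a}$ show $XE = E$), so $\n E = E$ and $H_0(X_1,\ldots,X_m;\M) = \M/\n\M = 0$ for every $s$. What is one-dimensional per copy of $E$ and sits in degree $-m$ is the socle $(0:_E \n)$, i.e.\ (a shift of) the \emph{top} Koszul homology, not $H_0$; and after the shift built into the Koszul complex, $H_m(X_1,\ldots,X_m;E(m))$ sits in degree $0$, exactly as Corollary \ref{cor-graded} predicts. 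So neither end of the Koszul homology yields a contradiction. The correct way to finish from where you are needs no Koszul homology at all: $E(k)(m)_n \neq 0$ for \emph{every} $n \leq -m$ (this is precisely how the paper rules out $\Gamma_\n(\M) \neq 0$ in its own proof), so $s \geq 1$ would force $\M_n \neq 0$ for infinitely many $n \ll 0$, contradicting the hypothesis directly. With that one-line replacement of your $H_0$ computation, your argument closes.
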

\begin{proof}
Suppose if possible $\M \neq 0$. Say $\M = \bigoplus_{n = r}^{s} \M_n$  and assume $\M_s \neq 0$.
  Note that $(X_1, \ldots, X_m)\M_s = 0$.  So $\M_s \subseteq H_m(X_1, \ldots, X_m)$. The later module is concentrated in degree zero, by \ref{cor-graded}. So $s = 0$.

  Consider the submodule $\N = \Gamma_{(X_1, \ldots, X_m)}(\M)$ of $\M$. If $\N \neq 0$ then as it is   a finite direct sum of $E(k)(m)$,  it will follow $\N_j \neq 0$ for $j \leq -m$ and so $\M_n \neq 0$ for $n \leq -m$, a contradiction.  Thus $\N = 0$. As $\M$ is $F_R$-finite it has only finitely many associated primes (all of which are graded, further no associated prime is
   $(X_1, \ldots, X_m)$ since $\N = 0$).
   As $k$ is infinite, after a  homogeneous linear change of variables we may assume $X_1$ is $\M$-regular.

We prove the result by induction on $m$.
We first consider the case when $m = 1$.
As $X_1$ is $\M$-regular we have an exact sequence
\[
 0 \rt \M_{n-1} \rt \M_n \rt \ov{\M}_n \rt 0.
\]
$\ov{\M}$ is concentrated in degree zero. Also $\M_1 = 0$. The above exact sequence implies $\M_0 = 0$, a contradiction. Thus $\M = 0$.

Next we assume the result for $m-1$ and prove the result for $m$.
As $X_1$ is $\M$-regular we have an exact sequence
\[
 0 \rt \M_{n-1} \rt \M_n \rt \ov{\M}_n \rt 0.
\]
So $\ov{\M}_n  = 0$ for $|n| \gg 0$. By our induction hypothesis we get $\ov{\M} = 0$. So we get $\M_0 \cong \M_1 = 0$, a  contradiction. Thus $\M = 0$.
\end{proof}

Next we prove:
\begin{theorem}\label{tame-body}
 Let $R = k[X_1,\ldots, X_m]$ be standard graded with $k$ an infinite field of characteristic $p > 0$. Let $\M = \bigoplus_{n \in \Z} \M_n$ be a graded $F_R$-finite module.
   Then we have
\begin{enumerate}[\rm (a)]
\item
The following assertions are equivalent:
\begin{enumerate}[\rm(i)]
\item
$\M_n \neq 0$ for infinitely many $n \ll 0$.
\item
There exists $r$  such that $\M_n \neq 0$ for all $n \leq r$.
\item
$\M_n \neq 0$ for all $n \leq -m$.
\item
$\M_n \neq 0$ for some $n \leq -m$.
\end{enumerate}
\item
The following assertions are equivalent:
\begin{enumerate}[\rm(i)]
\item
$\M_n \neq 0$ for infinitely many $n \gg 0$.
\item
There exists $r$  such that $\M_n \neq 0$ for all $n \geq r$.
\item
$\M_n \neq 0$ for all $n \geq 0$.
\item
$\M_n \neq 0$ for some $n \geq 0$.
\end{enumerate}
\end{enumerate}
\end{theorem}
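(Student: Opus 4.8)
The plan is to reduce both (a) and (b) to a single non-trivial implication, and to settle that implication from one structural lemma about bounded-below graded $F_R$-finite modules together with a generic linear nonzerodivisor.

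\textbf{Reduction.} First I would observe that in (a) the implications (iii)$\Rightarrow$(ii)$\Rightarrow$(i)$\Rightarrow$(iv) are all trivial (for (iii)$\Rightarrow$(ii) take $r=-m$; and a module nonzero in infinitely many negative degrees is certainly nonzero in some degree $\le -m$), and similarly in (b) the chain (iii)$\Rightarrow$(ii)$\Rightarrow$(i)$\Rightarrow$(iv) is trivial. Hence it suffices to prove: in (a), that $\M_r\neq 0$ for some $r\le -m$ forces $\M_n\neq 0$ for \emph{all} $n\le -m$; and in (b), that $\M_r\neq 0$ for some $r\ge 0$ forces $\M_n\neq 0$ for all $n\ge 0$.

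\textbf{Key lemma.} Next I would establish: if $\N\neq 0$ is a graded $F_R$-finite module which is bounded below, then $\N_n=0$ for all $n<0$ and $\N_0\neq 0$. To see this, pick a graded root $U$ of $\N$, a finitely generated graded module with a degree-preserving injective structure morphism $\beta\colon U\to F(U)$ whose direct limit is $\N$, and let $b$ be the least degree with $U_b\neq 0$. Since $|r'\otimes u|=|r'|+p|u|$, the module $F(U)$ is generated in degrees $\ge pb$, so $F(U)_b=0$ unless $b\le 0$; as $\beta$ is degree-preserving and injective, $U_b\hookrightarrow F(U)_b$ forces $b\le 0$. If $b<0$, the least nonzero degree of $F^i(U)$ is $p^i b\to-\infty$, so $\N=\varinjlim F^i(U)$ is not bounded below; hence $b=0$, each $F^i(U)$ vanishes in negative degrees, $F^i(U)_0=U_0$, and therefore $\N_n=0$ for $n<0$ and $\N_0=U_0\neq 0$.

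\textbf{Torsion decomposition and conclusion.} Set $\m=(X_1,\dots,X_m)$ and $\N=\Gamma_\m(\M)$. By \ref{MaZ} we have $\N\cong E(k)^s(m)$ for some $s\ge 0$, so (when $s>0$) $\N_n\neq 0$ precisely for $n\le -m$, and $\ov{\M}:=\M/\N$ is graded $F_R$-finite with $\Gamma_\m(\ov{\M})=0$ and $\ov{\M}_n=\M_n$ for all $n>-m$. Since $\ov{\M}$ is $F_R$-finite it has finitely many associated primes, all graded and none equal to $\m$; as $k$ is infinite, after a homogeneous linear change of coordinates $X_1$ is a nonzerodivisor on $\ov{\M}$, so multiplication by $X_1$ gives injections $\ov{\M}_{n-1}\hookrightarrow\ov{\M}_n$ for all $n$, i.e.\ $\{n:\ov{\M}_n\neq 0\}$ is upward closed. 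Now for (a): if $s>0$ then $\M_n\supseteq\N_n\neq 0$ for all $n\le -m$; if $s=0$ then $\M=\ov{\M}$, and since $\M_r\neq 0$ for some $r<0$ the key lemma shows $\M$ is not bounded below, so its upward-closed set of nonzero degrees is all of $\ZZ$. For (b): $\M_n=\ov{\M}_n$ for $n\ge 0$; if $\ov{\M}_0=0$, upward-closedness gives $\ov{\M}_n=0$ for all $n\le 0$, so $\ov{\M}$ is bounded below and the key lemma forces $\ov{\M}_0\neq 0$, a contradiction; hence $\ov{\M}_0\neq 0$ and upward-closedness yields $\ov{\M}_n\neq 0$, so $\M_n\neq 0$, for all $n\ge 0$.

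\textbf{Main obstacle.} The heart of the matter is the key lemma, where the characteristic-$p$ structure (the factor $p$ in the degree of $F$) is genuinely used: once one knows that a graded root of a bounded-below graded $F_R$-finite module is concentrated in nonnegative degrees with nonzero degree-zero part, the rest is bookkeeping with $\Gamma_\m(\M)$ and a generic linear nonzerodivisor.
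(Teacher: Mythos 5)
Your proof is correct, and while the outer scaffolding coincides with the paper's (the trivial chain of implications, the splitting off of $\Gamma_{\m}(\M)\cong E(k)(m)^s$ via \ref{MaZ}, and the generic homogeneous linear nonzerodivisor obtained by prime avoidance), the engine driving the nontrivial implication is genuinely different. The paper proves (iv)$\Rightarrow$(iii) by induction on $m$, using the exact sequence $0\to\M_{n-1}\to\M_n\to(\M/X_1\M)_n\to 0$ with the base case $m=1$ resting on Corollary \ref{cor-graded}, which in turn invokes the Eulerian property of graded $F$-modules from Ma--Zhang. You replace all of that with a single structural lemma read off from a graded root: since $F$ multiplies degrees by $p$, a nonzero bounded-below graded $F_R$-finite module must have its root concentrated in degrees $\geq 0$ with nonzero degree-zero part, whence $\N_n=0$ for $n<0$ and $\N_0\neq 0$. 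This avoids the induction on $m$ and the Eulerian machinery entirely, and the lemma itself is a clean statement of independent interest (it essentially explains the rigidity at degree $0$). Two small points you should make explicit: the claim that the least nonzero degree of $F^i(U)$ is exactly $p^ib$ (not merely $\geq p^ib$) needs the faithful exactness of $F$ over the regular ring $R$ --- e.g., $F(U)$ surjects onto $F\bigl(U/U_{\geq b+1}\bigr)\cong\bigl(R/\m^{[p]}\bigr)(-pb)^{\oplus t}$, which is nonzero in degree $pb$ --- and the assertion that these nonzero classes survive into $\N=\varinjlim F^i(U)$ uses that the transition maps $F^i(\beta)$ are injective, again by exactness of $F$. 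With those justifications supplied, the argument is complete.
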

\begin{proof}
  (a) Notice (iii) $\implies$ (ii) $\implies$ (i)  $\implies$ (iv). So it suffices to show (iv) $\implies$ (iii).

  Suppose $\M_n \neq 0$ for some $n \leq -m$. Let $\N = \Gamma_{(X_1, \ldots, X_m)}(\M)$.

  If $\N \neq 0$ then $\N = E(m)^s$ for some $s \geq 1$. Then $\N_j \neq 0$ for all $j\leq -m$. As $\N$ is a submodule of $\M$ it follows that $\M_n \neq 0$ for all $n \leq -m$.

  If $\N = 0$ then as before (after a linear homogeneous change of variables) we can assume $X_1$ is $\M$-regular.

  We prove the result by induction on $m$.

  We first consider the case when $m = 1$.
  As $X_1$ is $\M$-regular we have an exact sequence
  \[
  0 \rt \M_{n-1} \rt \M_{n} \rt \ov{\M}_n \rt 0.
  \]
  As $\ov{\M}$ is concentrated in degree zero we get $\M_j \cong \M_{-1}$ for all $j \leq -1$. The result follows.

  We assume the result when $m = r -1 \geq 1$ and prove the result when $m = r$.
  As $X_1$ is $\M$-regular we have an exact sequence
  \[
  0 \rt \M_{n-1} \rt \M_{n} \rt \ov{\M}_n \rt 0.
  \]
  If $\ov{\M}_n \neq 0$ for some $n \leq -r +1$ then by our induction hypothesis $\ov{\M}_n \neq 0$ for all $n \leq -r + 1$. So $\M_n \neq 0$ for all $n \leq -r + 1$. The result follows in this case.

 If $\ov{\M}_n = 0$ for all $n\leq -r + 1$ then we have $\M_n \cong \M_{-r + 1}$ for all $n \leq -r +1$. The result follows.

(b) Notice (iii) $\implies$ (ii) $\implies$ (i)  $\implies$ (iv). So it suffices to show (iv) $\implies$ (iii).

Let $\N = \Gamma_{(X_1, \ldots, X_m)}(\M)$. As $\N_j = 0$ for $j \geq - m + 1$ we get that $\M_n \cong (\M/\N)_n$ for all $n \geq - m + 1$. Thus it suffices to prove the result for the $F_R$-finite module $\M/\N$.  Thus we may assume $\N = 0$. As  before (after a linear homogeneous change of variables) we can assume $X_1$ is $\M$-regular.

  We prove the result by induction on $m$.

  We first consider the case when $m = 1$.
    As $X_1$ is $\M$-regular we have an exact sequence
  \[
  0 \rt \M_{n-1} \rt \M_{n} \rt \ov{\M}_n \rt 0.
  \]
  As $\ov{\M}$ is concentrated in degree zero we get $\M_j \cong \M_{0}$ for all $j \geq 0$. The result follows.

  We assume the result when $m = r -1 \geq 1$ and prove the result when $m = r$.
  As $X_1$ is $\M$-regular we have an exact sequence
  \[
  0 \rt \M_{n-1} \rt \M_{n} \rt \ov{\M}_n \rt 0.
  \]

  If $\ov{\M}_n \neq 0$ for some $n \geq 0$ then by our induction hypothesis $\ov{\M}_n \neq 0$ for all $n \geq 0$. So $\M_n \neq 0$ for all $n \geq 0$. The result follows in this case.

 If $\ov{\M}_n = 0$ for all $n\geq 0$ then we have $\M_n \cong \M_{-1}$ for all $n \geq 0$. The result follows.
\end{proof}

\begin{theorem}\label{rigid-body}
 Let $R = k[X_1,\ldots, X_m]$ be standard graded with $k$ an infinite field of characteristic $p > 0$. Assume $m \geq 2$. Let $\M = \bigoplus_{n \in \Z} \M_n$ be a graded $F_R$-finite module.
The following assertions are equivalent:
\begin{enumerate}[\rm(i)]
\item
$\M_n \neq 0$ for all $n \in \Z$.
\item
$\M_n \neq 0$ for some $n $ with $-m < n < 0$.
\end{enumerate}
\end{theorem}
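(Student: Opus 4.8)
The plan is to reduce to the two previous theorems (\ref{tame-body} and \ref{vanishing-std}) together with a dimension-one analysis, by peeling off the torsion submodule and then inducting on $m$. The implication (i) $\implies$ (ii) is trivial, so the whole content is (ii) $\implies$ (i). First I would set $\N = \Gamma_{(X_1,\ldots,X_m)}(\M)$. If $\N \neq 0$ then, as in the proof of Theorem \ref{tame-body}, $\N = E(k)(m)^s$ for some $s \geq 1$, so $\N_j \neq 0$ precisely for $j \leq -m$; in particular $\N_t = 0$ for the given $t$ with $-m < t < 0$, and so $\M_t \cong (\M/\N)_t \neq 0$. Hence it suffices to prove the statement for $\M/\N$, and we may assume $\N = 0$. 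Since $k$ is infinite and $\Ass_R \M$ is finite (as $\M$ is $F_R$-finite) with no associated prime equal to $(X_1,\ldots,X_m)$, after a homogeneous linear change of variables we may assume $X_1$ is $\M$-regular, giving exact sequences
\[
0 \rt \M_{n-1} \rt \M_n \rt \ov{\M}_n \rt 0 \qquad (n \in \Z),
\]
where $\ov{\M} = \M/X_1\M$ is a graded $F_{\ov R}$-finite module over $\ov R = k[X_2,\ldots,X_m]$, which has $m - 1$ variables.

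Now I would argue by induction on $m$. The base case is $m = 2$: here the hypothesis gives $\M_{-1} \neq 0$, and I must show $\M_n \neq 0$ for all $n$. For $n \leq -1$ this is immediate from Theorem \ref{tame-body}(a) (since $-1 = -m+1 \geq -m$ forces... wait, $-1 > -m = -2$), so I instead feed $\M_{-1}\neq0$ into the chain of inclusions $\M_{-2}\hookrightarrow\M_{-1}$ is the wrong direction — rather, the injection $\M_{n-1}\hookrightarrow\M_n$ shows $\M_{-1}\neq 0 \implies \M_n \neq 0$ for all $n \geq -1$. For $n \leq -2 = -m$, I use Theorem \ref{tame-body}(a): we have $\M_{-1}\neq 0$, hence $\M_0 \neq 0$, hence by (b) of that theorem $\M_n \neq 0$ for all $n \geq 0$; and to get $n \leq -m$ I need a nonzero component at or below $-m$. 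This is exactly where the argument needs care, so the plan is: from $\M_{-1}\neq 0$ and the injection $\M_{-2}\hookrightarrow \M_{-1}$, if $\M_{-2} = 0$ then $\M_{-1} \cong \ov{\M}_{-1}$, but $\ov{\M}$ is a graded $F_R$-finite module over the one-variable ring $k[X_2]$; by Theorem \ref{tame-body} applied to $\ov{\M}$ over that ring (with its $m=1$), nonvanishing of $\ov{\M}_{-1}$ forces $\ov{\M}_n\neq 0$ for all $n \leq -1$, and then the long exact sequences propagate nonvanishing of $\M$ downward — one shows $\M_n \neq 0$ for all $n$ by a descending induction, using that $\ov{\M}_n \neq 0$ and $\M_n \twoheadrightarrow \ov{\M}_n$. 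If instead $\M_{-2}\neq 0$ then Theorem \ref{tame-body}(a) directly gives $\M_n \neq 0$ for all $n \leq -2 = -m$, and combined with the upward propagation from $\M_{-1}$ we conclude.

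For the inductive step ($m = r \geq 3$, result known for $r-1$), the hypothesis supplies $\M_t \neq 0$ for some $-r < t < 0$. The injections $\M_{n-1}\hookrightarrow\M_n$ again give $\M_n \neq 0$ for all $n \geq t$, in particular for all $n \geq -r+1$ and for all $n \geq 0$. It remains to produce nonvanishing for $n \leq -r$. I distinguish two cases on $\ov{\M}$. If $\ov{\M}_s \neq 0$ for some $s$ with $-(r-1) < s < 0$, i.e. $-r < s \leq -1$... more precisely if $\ov{\M}$ over $k[X_2,\ldots,X_r]$ has a nonzero component in the range $(-(r-1),0)$, then by the induction hypothesis applied to $\ov{\M}$ we get $\ov{\M}_n \neq 0$ for all $n$, and then the surjections $\M_n \twoheadrightarrow \ov{\M}_n$ force $\M_n \neq 0$ for all $n$, done. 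If on the other hand $\ov{\M}_n = 0$ for all $n$ in that open range, then I claim $\ov{\M}_n = 0$ for all $n < 0$: indeed, if some $\ov{\M}_n \neq 0$ with $n \leq -(r-1)$ then by Theorem \ref{tame-body}(a) (over $k[X_2,\ldots,X_r]$, with its $m = r-1$) we would get $\ov{\M}_n \neq 0$ for all $n \leq -(r-1)$, which is fine, but also the injection structure... Here the cleanest route is: in this case the exact sequence gives $\M_{n-1}\cong \M_n$ for all $n \leq -(r-1)$ with $\ov{\M}_n = 0$, so $\M_n \cong \M_{-(r-1)+1} = \M_{-(r-2)}$... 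I would instead observe that since $\M_t \neq 0$ for some $t < 0$, were $\M$ to vanish in all sufficiently negative degrees we would contradict Theorem \ref{vanishing-std} (as $\M$ also does not vanish in all sufficiently positive degrees, already established); hence $\M_n \neq 0$ for infinitely many $n \ll 0$, and Theorem \ref{tame-body}(a) then gives $\M_n \neq 0$ for all $n \leq -m$. Combining with the earlier upward propagation, $\M_n \neq 0$ for all $n \in \Z$. The main obstacle is organizing this case analysis so that the "middle range" nonvanishing is genuinely used and the two previous theorems are invoked over the correct polynomial subring with the correct number of variables; the arithmetic of the cutoffs $-m$ versus $-(m-1)$ is the delicate bookkeeping.
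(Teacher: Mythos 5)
Your overall strategy (strip off $\Gamma_{(X_1,\ldots,X_m)}(\M)$, choose a linear $\M$-regular form, induct on $m$, and do a case analysis on $\ov{\M}=\M/X_1\M$) is exactly the paper's, and your base case $m=2$ and the first case of your inductive step (where $\ov{\M}$ is nonzero somewhere in the middle range, so the induction hypothesis applies to $\ov{\M}$) are correct. But there is a genuine gap at the end of the inductive step. In the case where $\ov{\M}$ vanishes throughout the middle range, you abandon the sub-case analysis you had started and instead argue: ``since $\M_t\neq 0$ for some $t<0$, were $\M$ to vanish in all sufficiently negative degrees we would contradict Theorem \ref{vanishing-std}.'' This is not a contradiction. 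Theorem \ref{vanishing-std} only says that a nonzero graded $F_R$-finite module cannot vanish in \emph{both} tails simultaneously; you have already established $\M_n\neq 0$ for all $n\geq 0$, so the hypothesis of that theorem can never be met and it yields no information about the negative tail. (A module nonzero in all degrees $\geq 0$ and zero in all degrees $<0$ is perfectly consistent with Theorem \ref{vanishing-std} --- see Example \ref{ex-tame}(ii); ruling this out under the hypothesis $\M_t\neq 0$ with $-m<t<0$ is precisely the content of the theorem you are proving, so it cannot be imported for free.)

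The repair is short and is what the paper does: in the case $\ov{\M}_n=0$ for all $n$ with $-(r-1)<n<0$, split further on the deep negative range of $\ov{\M}$. If $\ov{\M}_n\neq 0$ for some $n\leq -(r-1)$, then Theorem \ref{tame-body}(a) over $k[X_2,\ldots,X_r]$ gives $\ov{\M}_n\neq 0$ for all $n\leq -(r-1)$, and the surjections $\M_n\twoheadrightarrow\ov{\M}_n$ give $\M_n\neq 0$ for all $n\leq -r$. If instead $\ov{\M}_n=0$ for all $n\leq -(r-1)$ as well, then $\ov{\M}_n=0$ for every $n\leq -1$, so the exact sequence $(\dagger)$ gives isomorphisms $\M_{n-1}\cong\M_n$ for all $n\leq -1$, whence $\M_n\cong\M_{-1}\neq 0$ for all $n\leq -1$ (using $\M_t\hookrightarrow\M_{-1}$). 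Combined with the upward propagation from $\M_t$ this finishes the induction; no appeal to Theorem \ref{vanishing-std} is needed anywhere in the argument.
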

\begin{proof}
Clearly (i) $\implies$ (ii).
We prove the converse.
Let $\N = \Gamma_{(X_1, \ldots, X_m)}(\M)$. As $\N_j = 0$ for $j \geq - m + 1$ we get that $\M_n \cong (\M/\N)_n$ for all $n \geq - m + 1$. Thus it suffices to prove the result for the $F_R$-finite module $\M/\N$,  for if we show $(\M/\N)_j \neq 0$ for all $j \in \Z$ it follows that $\M_j \neq 0$ for all $j \in \Z$.
 Thus we may assume $\N = 0$. As  before (after a linear homogeneous change of variables) we can assume $X_1$ is $\M$-regular.
As $X_1$  is $\M$-regular we have an exact sequence
\[
0 \rt \M_{n-1} \rt \M_n \rt \ov{\M}_n \rt 0. \tag{$\dagger$}
\]
We prove the result by induction on $m$.

 We first consider the case when $m = 2$. We have $M_{-1} \neq 0$.
 \begin{enumerate}[\rm(a)]
   \item If $\ov{\M}_j \neq 0$ for some $j \leq -1$ then by \ref{tame-body} we have $\ov{\M}_j \neq 0$ for all $j \leq -1$. It follows that $\M_j \neq 0$  for all $j \leq -1$.
   \item If $\ov{\M}_j = 0$ for all $j \leq -1$ then by $(\dagger)$ we get $\M_j \cong \M_{-1} \neq 0$ for all $j \leq -1$. 
\item If $\ov{\M}_j \neq 0$ for some $j \geq 0$ then by \ref{tame-body} we have $\ov{\M}_j \neq 0$ for all $j \geq 0$. It follows that $\M_j \neq 0$  for all $j \geq  0$.
\item If $\ov{\M}_j = 0$ for all $j \geq 0$ then by $(\dagger)$ we get $\M_j \cong \M_{-1} \neq 0$ for all $j \geq -1$.
 \end{enumerate}
Thus $\M_j \neq 0$ for all $j \in \Z$. So we have proved the result when $m = 2$.

Assume the result holds when $m = r -1 \geq 2$. We prove the result when $m = r$.
We first prove $\M_j \neq 0$ for all $j \leq -1$.
Consider the exact sequence $(\dagger)$.
\begin{enumerate}[ \rm (1)]
  \item If $\ov{\M}_j = 0$ for some $j \leq - r + 1$ then by \ref{tame-body} we get $\ov{\M}_j = 0$ for $j \leq -r + 1$. So by our induction hypothesis $\ov{\M}_j = 0$ for $j \leq -1$.
  So we get by $(\dagger)$, $\M_j \cong \M_{-1}$ for all $j \leq -1$. Thus in this case $\M_j \neq 0$ for all $j \leq -1$
  \item If $\ov{\M}_j \neq 0$ for some $j \leq - r + 1$ then by \ref{tame-body} we get $\ov{\M}_j \neq 0$ for $j \leq -r + 1$. So by $(\dagger)$ in particular we get $\M_{-r + 1} \neq 0$.
  We consider two subcases:
  \begin{enumerate}[\rm (a)]
    \item $\ov{\M_j} \neq 0$ for some $j$ with $-r + 1 < j <0$. Then by induction hypothesis $\ov{\M}_j \neq 0$ for all $j \in \Z$. By ($\dagger$) it follows that $\M_j \neq 0$ for all $j \in \Z$.
    \item $\ov{\M_j} = 0$ for all $j$ with $-r + 1 < j <0$. By ($\dagger$) we get $\M_j = \M_{-1}$ for all $j$ with $-m  < j < 0$.
  \end{enumerate}
\end{enumerate}
Thus we have proved $\M_j \neq 0$ for all $j\leq -1$. Next we show $\M_j \neq 0$ for $j \geq 0$. We consider the exact sequence $(\dagger)$. We have two subcases:
\begin{enumerate}[\rm (1)]
  \item $\ov{\M_j} \neq 0$ for some $j \geq 0$. Then by \ref{tame-body} we get $\ov{\M}_j \neq 0$ for all $j \geq 0$. It follows $\M_j \neq 0$ for all $j \geq 0$.
  \item $\ov{\M_j} = 0$ for all $j \geq 0$. Then by $(\dagger)$ we get $\M_j \cong \M_{-1}$ for all $j \geq 0$.
\end{enumerate}
Thus the result holds by induction on $m$.
\end{proof}

For the next result we set $\ell(-) = \dim_k(-)$.
\begin{theorem}\label{len-body}
 Let $R = k[X_1,\ldots, X_m]$ be standard graded with  $k$ an infinite field of characteristic $p > 0$.  Let $\M = \bigoplus_{n \in \Z} \M_n$ be a graded $F_R$-finite module.
The following assertions are equivalent:
\begin{enumerate}[\rm(i)]
\item
$\ell(\M_n) < \infty$ for all $n \in \Z$.
\item
$\ell(\M_j) < \infty $ for some $j \in \Z $.
\end{enumerate}
\end{theorem}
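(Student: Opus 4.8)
The plan is to follow the pattern of the proofs of Theorems \ref{tame-body} and \ref{rigid-body}, writing $\ell(-) = \dim_k(-)$. The implication (i) $\Rightarrow$ (ii) is trivial, so all the content is in (ii) $\Rightarrow$ (i). The first move is to reduce to the case $\Gamma_{(X_1,\ldots,X_m)}(\M) = 0$. Put $\N = \Gamma_{(X_1,\ldots,X_m)}(\M)$; as recorded in \ref{MaZ} it is a finite direct sum of copies of $E(k)(m)$, and each graded component of $E(k)$ is a finite-dimensional $k$-vector space. Hence $\ell(\N_n) < \infty$ for all $n$, so from the exact sequences $0 \rt \N_n \rt \M_n \rt (\M/\N)_n \rt 0$ we get $\ell(\M_n) < \infty$ if and only if $\ell((\M/\N)_n) < \infty$; thus it suffices to prove the theorem for the $F_R$-finite module $\M/\N$, and we may assume $\N = 0$, i.e.\ $\m = (X_1,\ldots,X_m) \notin \Ass_R \M$.

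Now $\Ass_R \M$ is a finite set of graded primes, none equal to $\m$, so (as $k$ is infinite) a homogeneous linear change of variables lets us assume $X_1$ is $\M$-regular and $R/X_1R = k[X_2,\ldots,X_m]$. By the graded version of Theorem \ref{r1} (see Remark \ref{grading}), $\ov{\M} := \M/X_1\M$ is a graded $F_{R/X_1R}$-finite module, and the regularity of $X_1$ gives short exact sequences of $k$-vector spaces
\[
0 \rt \M_{n-1} \xrightarrow{\ X_1\ } \M_n \rt \ov{\M}_n \rt 0 \qquad (n \in \ZZ).
\]
I would then induct on $m$. When $m = 1$, $\ov{\M}$ is graded $F_k$-finite, hence (as in the proof of Corollary \ref{cor-koszul}) a finite-dimensional $k$-vector space, so $\ell(\ov{\M}_n) < \infty$ for all $n$. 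When $m \geq 2$, the hypothesis $\ell(\M_j) < \infty$ forces $\ell(\ov{\M}_j) < \infty$, since $\ov{\M}_j$ is a quotient of $\M_j$, and the induction hypothesis applied to $\ov{\M}$ over $k[X_2,\ldots,X_m]$ then yields $\ell(\ov{\M}_n) < \infty$ for every $n$.

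In both cases every graded component of $\ov{\M}$ is finite-dimensional, and the displayed exact sequences propagate finiteness of $\ell(\M_j)$ to all degrees: downward, $\M_{n-1}$ embeds into $\M_n$, so $\ell(\M_{n-1}) \leq \ell(\M_n)$; upward, $\ell(\M_n) \leq \ell(\M_{n-1}) + \ell(\ov{\M}_n)$. Starting from $j$ and running the induction in both directions gives $\ell(\M_n) < \infty$ for all $n$, which is (i). The routine parts are the change of variables and the length bookkeeping; the step deserving a little care — and the one I would regard as the crux — is the base case $m = 1$ together with the structural input that $\Gamma_\m(\M)$ is a finite sum of shifted copies of $E(k)$, since this is exactly what legitimizes the reduction to $\N = 0$ and makes the $m = 1$ case immediate.
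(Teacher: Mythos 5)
Your proposal is correct and follows essentially the same route as the paper's proof: reduce to $\Gamma_{(X_1,\ldots,X_m)}(\M)=0$ using the structure of that submodule as a finite sum of shifted copies of $E(k)$, choose a homogeneous linear form $X_1$ regular on $\M$, and induct on $m$ via the short exact sequences $0 \rt \M_{n-1} \rt \M_n \rt \ov{\M}_n \rt 0$, with the base case $m=1$ handled by Corollaries \ref{cor-koszul} and \ref{cor-graded}. No gaps.
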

\begin{proof}
Clearly (i) $\implies$ (ii). We prove the converse.
Let $\N = \Gamma_{(X_1,\ldots, X_m)}(\M)$. Then $\N$ is a finite direct sum of $E(k)(m)$. It follows that $\N_n$ has finite length for all $n \in \Z$. Further note that  $\ell(\M/\N)_j$ is finite.
Thus it is sufficient to prove $\ell(\M/\N)_n$ is finite for all $n \in \Z$. So we may assume $\N = 0$. As  before (after a linear homogeneous change of variables) we can assume $X_1$ is $\M$-regular.
As $X_1$  is $\M$-regular we have an exact sequence
\[
0 \rt \M_{n-1} \rt \M_n \rt \ov{\M}_n \rt 0. \tag{$\dagger$}
\]
We prove the result by induction on $m$.

First consider the case when $m = 1$. We note that by \ref{cor-koszul} and \ref{cor-graded} that
\begin{align*}
  H_0(\ov{\M})_s &= \ \text{finite dimensional $k$-vector space when $s =0$}, \\
  &= 0 \ \text{if $s \neq 0$}.
\end{align*}
By $(\dagger)$ it trivially  follows that $\ell(\M)_n \leq \ell(\M)_j < \infty $ for all $n \leq j$.
As $\ell(\ov{\M})_n < \infty $ for all $n$, it follows from $(\dagger)$  and a easy induction that $\ell(\M)_n  < \infty $ for all $n \geq j$.
Thus the result holds when $m = 1$.

We assume the result holds for $m = d -1 \geq 1$ and prove the result for $m = d$.
By $(\dagger)$ we get $\ell(\ov{\M}_j) < \infty$. So by induction hypotheses we get that $\ell(\ov{\M}_n) < \infty $ for all $n \in \Z$.
By $(\dagger)$ it trivially  follows that $\ell(\M)_n \leq \ell(\M)_j < \infty $ for all $n \leq j$.
As $\ell(\ov{\M})_n < \infty $ for all $n$, it follows from $(\dagger)$  and a easy induction that $\ell(\M)_n  < \infty $ for all $n \geq j$.
Thus the result holds for $m = d$. Thus by induction the Theorem is proved.
\end{proof}

We prove the following surprising result when $m = 1$.
\begin{theorem}\label{len-m-1}
 Let $R = k[X]$ be standard graded with  $k$ an infinite field of characteristic $p > 0$.  Let $\M = \bigoplus_{n \in \Z} \M_n$ be a graded $F_R$-finite module.
Then $\ell(\M_n) < \infty $ for all $n \in \Z$.
\end{theorem}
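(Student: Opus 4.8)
The plan is to bound $\dim_k \M_n$ by the rank of $\M$ over $k[X]$, using the structure of its localization at $X$. First I would base change along the flat map of regular rings $k[X] \hookrightarrow k(X)$: by Lyubeznik's flat base change result \cite[1.3, 2.9]{Lyu-2}, $\M \otimes_{k[X]} k(X)$ is an $F_{k(X)}$-finite module, and since $k(X)$ is a field the argument in the proof of Corollary \ref{cor-koszul} shows it is a finite-dimensional $k(X)$-vector space. Write $\rho$ for its dimension, so $\operatorname{rank}_{k[X]}\M = \rho < \infty$.

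Next I would examine $\M_X := \M \otimes_{k[X]} k[X,X^{-1}]$ as a graded module over $k[X,X^{-1}]$. Since every nonzero homogeneous element of $k[X,X^{-1}]$ is a unit, this ring is a graded field, so $\M_X$ is a direct sum of shifted copies of $k[X,X^{-1}]$, say $\M_X \cong \bigoplus_{i \in I} k[X,X^{-1}](b_i)$. Each summand contributes a one-dimensional space in every degree, so $\dim_k (\M_X)_n = |I|$ for all $n$; comparing with $\M_X \otimes_{k[X,X^{-1}]} k(X) \cong \M \otimes_{k[X]} k(X)$ gives $|I| = \rho$, hence $\dim_k (\M_X)_n = \rho < \infty$ for every $n$.

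To conclude, I would use that the kernel of the localization map $\M \to \M_X$ is $\Gamma_{(X)}(\M)$, which is again $F_{k[X]}$-finite and supported only at $(X)$; by \ref{MaZ} it is a finite direct sum of shifts of $E(k)$, each graded component of which is finite-dimensional over $k$ (every graded component of $E(k) = H^1_{(X)}(k[X])$, up to shift, is at most one-dimensional). Since $\M/\Gamma_{(X)}(\M)$ embeds in $\M_X$, the exact sequence $0 \to \Gamma_{(X)}(\M) \to \M \to \M/\Gamma_{(X)}(\M) \to 0$ yields $\dim_k \M_n \le \dim_k \Gamma_{(X)}(\M)_n + \rho < \infty$ for all $n$. (Alternatively, since $\Gamma_{(X)}(\M)_n = 0$ for $n \gg 0$, one directly obtains $\ell(\M_n) < \infty$ for $n \gg 0$, and Theorem \ref{len-body} then gives the result for all $n$.)

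The step I expect to require the most care is the first: one must check that flat base change from $k[X]$ to $k(X)$ preserves $F$-finiteness (which is where regularity of both rings and flatness of the map enter, via \cite{Lyu-2}) and that an $F$-finite module over a field is finite-dimensional --- equivalently, that the injective transition maps $F^e(M)\otimes_{k[X]}k(X)$, all of dimension $\operatorname{rank}_{k[X]}M$ since the Frobenius functor over $k[X]$ is exact and commutes with localization, are isomorphisms, forcing the colimit to be finite-dimensional. The graded-field computation and the final bookkeeping are then routine.
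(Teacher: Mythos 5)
Your proof is correct, but it follows a genuinely different route from the paper's. The paper's argument is $D$-module theoretic: after passing to $\bar{k}$ it uses that $\M$ has finite length over the ring $\D$ of divided-power differential operators on $k[X]$ (\cite[5.7]{Lyu-2}), deduces that $\M_0$ is Noetherian, hence finitely generated, over $\D_0 = k\langle X^i\partial_{[i]}\rangle$, and then invokes the Eulerian property of graded $F$-finite modules (\cite[4.4]{MaZhang}) to see that $\D_0$ acts on a finite generating set by scalars, so $\ell(\M_0)<\infty$; Theorem \ref{len-body} then spreads finiteness to all degrees. You instead bound each component directly: flat base change to $k(X)$ (via \cite[1.3, 2.9]{Lyu-2}, exactly as the paper uses for localizations elsewhere) together with the argument of Corollary \ref{cor-koszul} gives a finite generic rank $\rho$; the localization $\M_X$ is a graded module over the graded field $k[X,X^{-1}]$, hence graded-free of rank $\rho$ with every component $\rho$-dimensional; and the $(X)$-torsion $\Gamma_{(X)}(\M)$ is a finite direct sum of shifts of $E(k)$ by \ref{MaZ}, so $\ell(\M_n)\le s+\rho$ for all $n$. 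Your approach avoids both the finite-length theorem for $\D$-modules and the Eulerian operator computation, needs no reduction to $\bar{k}$, and yields a uniform explicit bound on $\ell(\M_n)$; the paper's approach, by contrast, stays within the $\D$-module framework it develops for the rest of Section 7 and reuses Theorem \ref{len-body} as the finishing step (which you note can also be used to shorten your own ending). Both arguments rest on the same two external pillars, namely Lyubeznik's base-change/finiteness machinery and the structure result \ref{MaZ} for torsion $F$-finite modules, so your proof is a legitimate and arguably more elementary alternative.
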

\begin{proof}
Let $\ov{k}$ denote the algebraic closure of $k$. Set $S = \ov{k}[X] = R\otimes_k \ov{k}$. Then by \cite[2.9]{Lyu-2} $\M\otimes_R S = \M \otimes_k \ov{k}$ is $F_S$-finite, Furthermore it is clear that $\dim_{\ov{k}} \M_n \otimes_k \ov{k} = \dim_k \M_n$ for all $n \in \Z$. Thus we may assume $k$ is algebraically closed.

Let $\D$ be the ring of $k$-linear differential operators on $R$.
Put
$$ \partial_{[m]} = \frac{1}{m!}\frac{\partial^m}{\partial Y^m}. $$
Then $\D = R<\partial_{[m]} \colon m \geq 1>$. We extend the grading on $R$ to $\D$ by giving $\deg \partial_{[m]} = -m$.
By \cite[5.7]{Lyu-2} we get that $\M$ has finite length as a $\D$-module. It can be easily seen by \cite[4.2]{MaZhang}    that $\M$ is a graded $\D$-module.
We note that \\ $\D_0 = k<X^i\partial_{[i]} \mid \ i \geq 1 >$.

Claim: $\M_0$ is Noetherian as a $\D_0$-module. We first note that if $V$ is a $\D_0$-submodule of $\M_0$. Then $\D V \cap \M_0 =V$.
If
$$V_1 \subseteq V_2 \subseteq \cdots \subseteq V_r \subseteq V_{r+1} \subseteq \cdots$$
be an ascending chain of $\D_0$ submodules of $\M_0$. Then we have an ascending chain of $\D$-submodules of $\M$
$$\D V_1 \subseteq \D V_2 \subseteq \cdots \subseteq \D V_r \subseteq \D V_{r+1} \subseteq \cdots$$
As $\M$ is Noetherian we get that there exists $r_0$ such that $\D V_r = \D V_{r_0}$ for all $r\geq r_0$. Intersecting with $\M_0$ we get that
$V_r = V_{r_0}$ for all $r \geq r_0$. Thus $\M_0$ is Noetherian as a $\D_0$-module. In particular it is finitely generated as a $\D_0$-module.

Say $\M_0 $ is generated as a $\D_0$ module by $u_1, \ldots, u_s$. Consider the Eulerian operator $\mathcal{E}_i = X^i\partial_{[i]}$. By \cite[4.4]{MaZhang} $\M$ is Eulerian as a $\D$-module. So  we have
\[
\mathcal{E}_i u_j = \binom{|u_j|}{i}u_j = u_j \quad \text{for} \ j = 1, \ldots, s.
\]
It follows that $\D_0 \M_0 \subseteq ku_1 + \cdots + ku_r$.
So $\ell(\M_0) < \infty$. By \ref{len-body} it follows that $\ell(\M_n) < \infty $ for all $n \in \Z$.
\end{proof}

If $\ell(\M_n)$ is finite for all $n \in \Z$ then we may ask about the functions $n \rt \ell(\M_n)$ as $n \rt \infty$ and also as $n \rt - \infty$. We show
\begin{theorem}\label{growth-body}
 Let $R = k[X_1,\ldots, X_m]$ be standard graded with  $k$ an infinite field of characteristic $p > 0$.  Let $\M = \bigoplus_{n \in \Z} \M_n$ be a graded $F_R$-finite module. Assume $\ell(\M_n) < \infty$ for all $n \in \Z$. Then there exists polynomials $P(X), Q(X) \in \mathbb{Q}[X]$ of degree $\leq m-1$ such that
\begin{enumerate}[\rm(i)]
\item
$\ell(\M_n) = P(n) $ for all $n \ll 0$.
\item
$\ell(\M_n)  = Q(n)$ for all $n \gg 0 $.
\end{enumerate}
\end{theorem}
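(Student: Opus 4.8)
The plan is to prove the theorem by induction on $m$, reducing at each stage to a module over a polynomial ring in one fewer variable by killing a generic linear form. Two preliminary observations drive everything. First, exactly as in the proofs of Theorem~\ref{main-koszul-body} and Theorem~\ref{len-body}, set $\m = (X_1,\ldots,X_m)$ and $\N = \Gamma_\m(\M)$; then $\N$ is a finite direct sum of copies of $E(k)(m)$, so a direct computation of the graded pieces of $E(k)$ (the graded $k$-dual of $R$) gives $\ell(\N_n) = s\cdot\dim_k R_{-n-m}$ for $n \leq -m$ and $\ell(\N_n) = 0$ for $n \geq -m+1$; in particular $n \mapsto \ell(\N_n)$ agrees with a polynomial of degree $m-1$ for $n \ll 0$ and vanishes for $n \gg 0$. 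Since adding such a function changes neither the shape nor the degree bounds we are after, it suffices to prove the theorem for $\M/\N$ in place of $\M$, so we may assume $\Gamma_\m(\M) = 0$. Second, in that case $\M$ has only finitely many associated primes (being $F_R$-finite), all graded and none equal to $\m$, so, $k$ being infinite, after a homogeneous linear change of variables (which affects neither the hypotheses nor the numbers $\ell(\M_n)$) we may assume $X_1$ is a nonzerodivisor on $\M$.

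For the base case $m = 1$ the Koszul complex gives $0 \to \M(-1) \xrightarrow{X_1} \M \to \ov\M \to 0$ with $\ov\M = H_0(X_1;\M)$, which by Corollaries~\ref{cor-koszul} and~\ref{cor-graded} is a finite-dimensional $k$-vector space concentrated in degree $0$; reading off graded pieces gives $\ell(\M_n) = \ell(\M_{n-1})$ for all $n \neq 0$, so $\ell(\M_n)$ is constant for $n \leq -1$ and constant for $n \geq 0$, and we may take $P, Q$ of degree $0 = m-1$. For the inductive step ($m \geq 2$) the same short exact sequence $0 \to \M(-1)\xrightarrow{X_1}\M \to \ov\M \to 0$ holds with $\ov\M = \M/X_1\M$; by Theorem~\ref{r1} in its graded form (Remark~\ref{grading}), $\ov\M$ is a graded $F_S$-finite module over the standard graded polynomial ring $S = R/(X_1) = k[X_2,\ldots,X_m]$ in $m-1$ variables, and its graded pieces have finite length by the exact sequence. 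By the induction hypothesis there are $\widetilde{P}, \widetilde{Q} \in \mathbb{Q}[X]$ of degree $\leq m-2$ with $\ell(\ov\M_n) = \widetilde{P}(n)$ for $n \ll 0$ and $\ell(\ov\M_n) = \widetilde{Q}(n)$ for $n \gg 0$. The exact sequence gives $\ell(\M_n) - \ell(\M_{n-1}) = \ell(\ov\M_n)$ for all $n$; summing this telescoping relation downward from a fixed large $n$ and upward from a fixed small $n$, and using that forming partial sums turns a polynomial function of degree $\leq m-2$ into a polynomial function of degree $\leq m-1$ on an entire negative (resp.\ positive) ray — expand in the basis $\binom{X}{j}$ and apply the hockey-stick identity — we obtain polynomials $P, Q$ of degree $\leq m-1$ with $\ell(\M_n) = P(n)$ for $n \ll 0$ and $\ell(\M_n) = Q(n)$ for $n \gg 0$. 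Undoing the initial reduction, i.e.\ adding back $\ell(\N_n)$, keeps the degrees $\leq m-1$, which completes the induction.

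The structural inputs are all in hand: Theorem~\ref{r1} to propagate $F$-finiteness modulo $X_1$, Corollaries~\ref{cor-koszul} and~\ref{cor-graded} to identify $H_0(X_1;\M)$ in the one-variable base case, and finiteness of $\Ass_R\M$ together with the infinitude of $k$ to secure a degree-one nonzerodivisor. The only places demanding care — and the closest thing to an obstacle — are the two combinatorial points flagged above: the exact Hilbert-function computation for $E(k)(m)$ in the left tail, and the assertion that the partial sums of a $\mathbb{Q}$-polynomial of degree $d$ form a $\mathbb{Q}$-polynomial of degree $d+1$ valid for all sufficiently negative arguments, not merely positive ones, which is precisely what pins the degree bound at $m-1$ rather than something larger. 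Both are elementary once the binomial-coefficient bookkeeping is set up.
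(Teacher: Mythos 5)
Your proposal is correct and follows essentially the same route as the paper: reduce modulo $\Gamma_{(X_1,\ldots,X_m)}(\M)$ (a finite direct sum of copies of $E(k)(m)$, for which the statement holds), pick a generic linear nonzerodivisor $X_1$, and induct on $m$ via the sequence $0 \to \M_{n-1} \to \M_n \to \ov{\M}_n \to 0$. The only difference is that you carry out explicitly the binomial/telescoping computation (partial sums of an eventually polynomial difference function) that the paper dispatches by citing \cite[4.1.2]{BH}.
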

\begin{proof}
  Consider $\N = \Gamma_{(X_1, \ldots, X_m)}(\M)$. Then as discussed above $\N = E(k)(m)^s$ for some $s \geq 0$ where $E(k)$ is the graded injective hull of  $k$. It is well-known that the assertion of the Theorem holds for $E(k)(m)$. Thus it suffices to prove the result for $\M/\N$. Thus we can assume $\N = 0$.
   As  before (after a linear homogeneous change of variables) we can assume $X_1$ is $\M$-regular.
As $X_1$  is $\M$-regular we have an exact sequence
\[
0 \rt \M_{n-1} \rt \M_n \rt \ov{\M}_n \rt 0. \tag{$\dagger$}
\]
We prove the result by induction on $m$.

We first consider the case when $m = 1$. By \ref{cor-graded}, $\ov{\M}$ is concentrated in degree zero. So we  have $\M_j  \cong \M_{-1}$ for all $j \leq -1$ and $\M_j \cong \M_0$ for all $j \geq 0$. The result follows.

We assume the result holds for $m = d -1 \geq 1$ and prove the result for $m = d$.
By $(\dagger)$  we get that $\ell(\ov{\M}_n) < \infty $ for all $n \in \Z$.  So by induction hypothesis there exists $u(X), v(X) \in \mathbb{Q}[X]$ of degree $\leq m-2$, with $\ell(\ov{\M})_n = u(n)$ for all $n \ll 0$ and
$\ell(\ov{\M}_n) = v(n)$ for all $n \gg 0$. By $(\dagger)$ we have
\begin{align*}
  \ell(\M_n) - \ell(\M_{n-1}) &= u(n) \quad \text{for $n  \ll 0$}, \\
   \ell(\M_n) - \ell(\M_{n-1}) &= v(n) \quad \text{for $n  \gg 0$}.
\end{align*}
The result follows from \cite[4.1.2]{BH}.
\end{proof}

\section{Proofs of Theorem \ref{vanish} and \ref{tame}}
In this section we give proofs of Theorems \ref{vanish} and \ref{tame}. The proof essentially involes reduction of $A$ to an infinite field of characteristic $p$ and then using results from the previous section.
\begin{lemma}
 \label{support-m}
 Let $A = k[[Y_1, \ldots, Y_d]]$ where $k$ is an infinite field of characteristic $p > 0$. Let $R = A[X_1,\ldots, X_m]$  be standard graded. Let
 $\M = \bigoplus_{n \in \Z}\M_n$ be a graded $F_R$-module. Suppose $\M_c$ is supported ONLY at the maximal ideal $\n = (Y_1, \ldots, Y_d)$ of $A$. Then
 \begin{enumerate}[\rm (a)]
   \item $\M_c = E^{\alpha}$ where $E$ is the injective hull of
$k$ as an $A$-module (here $\alpha$ is an ordinal, possibly infinite).
   \item $H_d(Y_1, \ldots, Y_d; \M)$ is a graded $F_S$-finite  module where $S = k[X_1,\ldots, X_m]$.

 \item If $\M_c \cong E_A(k)^\alpha$ then $H_d(Y_1, \ldots, Y_c, \M)_c \cong k^\alpha$.
 \end{enumerate}
 \end{lemma}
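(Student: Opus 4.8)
The plan is to establish the three assertions separately, with (a) carrying all the weight and (b), (c) following quickly.

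For (a), I would first pass to $\N := \Gamma_{\n R}(\M) = \Gamma_{(Y_1,\ldots,Y_d)}(\M)$. Since the $Y_i$ have degree $0$, the $c$-th graded component of $\N$ is $\Gamma_\n(\M_c)$, and this equals $\M_c$ by the hypothesis that $\M_c$ is supported only at $\n$; thus replacing $\M$ by $\N$ loses nothing about $\M_c$, and $\N = H^0_{\n R}(\M)$ is again a graded $F_R$-finite module by Lyubeznik's stability of $F$-finiteness under local cohomology. Lemma \ref{m-torsion}, applied with $r = d$, then gives $H_i(Y_1,\ldots,Y_d;\N) = 0$ for $i < d$. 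As the $Y_i$ are homogeneous of degree $0$, the Koszul complex $K_\bullet(Y_1,\ldots,Y_d;\N)$ decomposes as $\bigoplus_{c\in\Z} K_\bullet(Y_1,\ldots,Y_d;\N_c)$, whence $H_i(Y_1,\ldots,Y_d;\M_c) = H_i(Y_1,\ldots,Y_d;\N)_c = 0$ for all $i < d$.

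Next I would convert this into a statement about $\Ext$ and Bass numbers. Since $A$ is regular with maximal ideal $\n = (Y_1,\ldots,Y_d)$, the Koszul complex on $Y_1,\ldots,Y_d$ is a finite free resolution of $k = A/\n$, and its self-duality gives $\Ext^{d-i}_A(k,\M_c) \cong H_i(Y_1,\ldots,Y_d;\M_c)$; hence $\Ext^j_A(k,\M_c) = 0$, i.e. $\mu_j(\n,\M_c) = 0$, for every $j \geq 1$. Because $\M_c$ is $\n$-torsion, every cosyzygy in its minimal injective $A$-resolution is again $\n$-torsion, so the $j$-th term of that resolution is a direct sum of copies of $E = E_A(k)$ with multiplicity $\mu_j(\n,\M_c)$; the vanishing of these multiplicities for $j \geq 1$ forces the resolution to be concentrated in homological degree $0$, giving $\M_c \cong E^{\alpha}$ with $\alpha = \mu_0(\n,\M_c) = \dim_k(0\colon_{\M_c}\n)$. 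This proves (a). Assertion (b) is then immediate from the graded form of Theorem \ref{main-koszul} recorded in Remark \ref{grading}: with $r = d$ the modules $H_i(Y_1,\ldots,Y_d;\M)$ are graded $F_{\ov R}$-finite, and here $\ov R = R/(Y_1,\ldots,Y_d) = k[X_1,\ldots,X_m] = S$ (the support hypothesis is not needed for this part). For (c), the degree-$c$ component of the top Koszul homology is $H_d(Y_1,\ldots,Y_d;\M)_c = H_d(Y_1,\ldots,Y_d;\M_c) = (0\colon_{\M_c}\n)$, the kernel of the top Koszul differential; substituting $\M_c \cong E^\alpha$ gives $(0\colon_{E^\alpha}\n) = (0\colon_E\n)^\alpha$, and $(0\colon_E\n)$ is the socle of $E_A(k)$, which is one-dimensional over $k$, so $H_d(Y_1,\ldots,Y_d;\M)_c \cong k^\alpha$.

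The only real obstacle is the injectivity of $\M_c$ in part (a); the key device is the passage to $\N = \Gamma_{\n R}(\M)$, which is what makes Lemma \ref{m-torsion} available, together with the observation that Koszul-homology vanishing on the single graded component in degree $c$ is precisely the vanishing of the higher Bass numbers $\mu_j(\n,\M_c)$, and that for an $\n$-torsion module this forces injective dimension zero. Everything else is bookkeeping: splitting degree-$0$ Koszul complexes by internal degree, the standard identification of Koszul cohomology with $\Ext$ over the regular ring $A$, and the elementary structure of $E_A(k)$.
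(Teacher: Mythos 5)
Your proposal is correct, but part (a) follows a genuinely different route from the paper. The paper's argument for (a) uses no Koszul homology: it notes (via \cite[5.1]{Lyu-2}) that $\M$ is a graded $\D_k(R)$-module, hence $\M_c$ is a $\D_k(T)$-module over the polynomial subring $T = k[Y_1,\ldots,Y_d]$ supported only at $(Y_1,\ldots,Y_d)$, and then quotes Lyubeznik's characteristic-free lemma from \cite{Lyu-inj} to conclude $\M_c \cong E^\alpha$ (finishing with the observation that $E$ is also the injective hull of $k$ over $A$ and $\M_c \otimes_T A = \M_c$). You instead pass to $\N = \Gamma_{(Y_1,\ldots,Y_d)}(\M)$, invoke Lemma \ref{m-torsion} with $r=d$, split the Koszul complex on the degree-zero elements $Y_i$ by internal degree, and convert the vanishing $H_i(Y_1,\ldots,Y_d;\M_c)=0$ for $i<d$ into $\mu_j(\n,\M_c)=0$ for $j\ge 1$, which for an $\n$-torsion module forces the minimal injective resolution to stop at step zero. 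This is a valid, self-contained derivation from the paper's own Lemma \ref{m-torsion} (whose proof is, in substance, the same differential-operator computation that underlies Lyubeznik's injective-dimension lemma), so the two arguments are close cousins; yours has the merit of staying inside the machinery already built in Sections 4--5. Your parts (b) and (c) coincide with the paper's. One caveat: Lemma \ref{m-torsion} is stated under the hypotheses of Theorem \ref{main-koszul-body}, i.e., for $F_R$-\emph{finite} modules, whereas Lemma \ref{support-m} is stated for an arbitrary graded $F_R$-module; the paper's $\D$-module argument for (a) does not need finiteness, while yours does. Since part (b) and every application in the paper concern $F_R$-finite modules anyway, this costs nothing in practice, but you should add the finiteness hypothesis at the point where you invoke Lemma \ref{m-torsion}.
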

 \begin{proof}
(a)  Let $\D_k(A)$ be the ring of $k$-linear differential operators on $A$ and let $\D_k(R)$ be the ring of $k$-linear differential operators on $R$. Note $\D_k(A)$ can be considered as a subring of $\D_k(R)$. Note $\D_k(R)$ is a graded ring with  $\D_k(A) \subset \D_k(R)_0$. By \cite[5.1]{Lyu-2},  $\M$ is a graded $\D_k(R)$-module. So $\M_c$ is a $\D_k(A)$- supported only at $\n$.
 Let $T = k[Y_1,\ldots, Y_d]$. Let $\D_k(T)$ be the ring of $k$-linear differential operators on $T$. Then $\D_k(T)$ is a subring of $\D_k(A)$. So $\M_c$ is a $\D_k(T)$-module supported only at $(Y_1, \ldots, Y_d)$. So by \cite[Lemma, p. \ 208]{Lyu-inj} we get $\M_c =  E^{\alpha}$ where $E$ is the injective hull of
$k$ as an $T$-module (here $\alpha$ is an ordinal, possibly infinite). We now note that $E$ is also the injective hull of $k$ as an $A$-module.
By \ref{min-loc},  $\M_c = \M_c\otimes_T A$. It follows that $\M_c  = E^\alpha$ as an $A$-module.
The result follows.

(b) This follows from \ref{main-koszul-body}.

(c) We note that $H_d(Y_1, \ldots, Y_d; E) = k$. The result follows as
$$ H_d(Y_1, \ldots, Y_d;-) \cong \Hom_A(A/(Y_1,\ldots, Y_d),  -),$$
commutes with direct sums.
 \end{proof}

 \s \textit{Standard technique:}\label{referee}  We now discuss a basic technique which we use often. Let $A$ be a regular  ring
containing a field of characteristic $p > 0$. Set $R = A[X_1,\ldots, X_m]$ standard graded and let $\FF$ be a graded Lyubeznik functor on $ \ ^* Mod(R)$.
Set $\M = \FF(R) = \bigoplus_{n\in \Z}\M_n$. Then $\M$ is $F_R$-finite.  Suppose $\M_c \neq 0$ for some $c$. Let $P$ be a minimal prime of $\M_c$ and let $B = \widehat{A_P}$. Also
 set $S = B[X_1,\ldots, X_m]$.
 We note that by \ref{std-op} we have a $G$, a graded Lyubeznik functor on $ \ ^* Mod(S)$ with   $\N = G(S)  = B \otimes_A \FF(R) =  B\otimes_A \M$. In particular $\N$ is $F_S$-finite.

 By Cohen-structure theorem $B = K[[Y_1,\ldots, Y_g]]$ where $K = \kappa(P)$ the residue field of $A_P$  and $g = \height_A P$.  Notice
by \ref{min-loc},  $\N_c = (\M_c)_P \neq 0$. As $P$ is the minimal prime of $\M_c$ we get that $\N_c$ is supported  ONLY at the maximal ideal of $B$.
By \ref{support-m} we get that $\N_c = E_B(K)^\alpha$ for some ordinal $\alpha$ possibly infinite.

 If $K$ is finite then we take  an infinite field $K'$ containing $K$ and consider the flat extension
 $B \rt C = K'[[Y_1,\ldots, Y_d]]$. Set $T = C[X_1,\ldots, X_m]$ a flat extension of $S$. We note that by \ref{std-op}
 there is a graded Lyubeznik functor $H$ on $ \ ^* Mod(T)$
 the functor $H(T) = T \otimes_R G(S) = C \otimes_B G(S) = C\otimes_B \N$.  In particular $\LL = H(T)$    is $F_T$-finite. Notice
 $\LL_c = \N_c\otimes_B C  \neq 0$. As  $\N_c$ is supported  only at the maximal ideal of $B$ we get that $\LL_c$ is supported only at the maximal ideal of $C$.
Furthermore note that as $E_B(K) = H^g_\m(B)$ where $\m$ is the maximal ideal of $B$. Then $E_B(K)\otimes_B C = H^g_\n(C) = E_C(K')$ where $\n$ is the maximal ideal of $C$.
We note $\LL_c = E_C(K')^\alpha$.

$V = H_g(Y_1,\ldots,Y_g; \LL)$ is $F_D$-finite where   $D = K'[X_1,\ldots, X_m]$.
Furthermore $V \subseteq \LL$.
Also note that $\LL = \M \otimes_A C$.

\begin{proof}[Proof of Theorem \ref{vanish}]
 Suppose if possible $\M_c \neq 0$ for some $c$.
 We apply the standard technique \ref{referee}.
  Notice $\LL_j = C\otimes_A \M_j = 0$ for $|j| \gg 0$. Furthermore we get that
 $\LL_c \neq 0$ and supported at the maximal ideal of $C$. Furthermore $V_c \neq 0$.
As $V \subseteq \LL$ we
  we get
  $V_j = 0$ for $|j| \gg 0$.
 This contradicts  Theorem \ref{vanishing-std}. Thus $M = 0$.
\end{proof}
Next we give
\begin{proof}[Proof of Theorem \ref{tame}]
(a) Clearly (iii) $\implies$ (ii) $\implies$ (i) $\implies$ (iv). We show (iv) $\implies $ (iii).
 Suppose if possible $\M_r \neq 0$ for some $r \leq -m$.  Let $P$ be a minimal prime  of $\M_r$. We apply our standard technique \ref{referee}.
 We have $\LL_{s}  \neq 0$.
By \ref{support-m} we get that
 $V = H_g(Y_1,\ldots,Y_g; \LL)$ is $F_D$-finite where $D = K'[X_1,\ldots, X_m]$.
Furthermore $V \subseteq \LL$. We have $V_r \neq 0$.

 By Theorem \ref{tame-body}   we get  $V_n \neq 0$ for all $n \leq -m$.
 We note that $V_n \subseteq \LL_n$ for all $n \in \ZZ$. So $\LL_n \neq 0$ for all
 $n \leq -m$. As $\LL_n = \M_n \otimes_A C$ it follows that $\M_n \neq 0$ for all
 $n \leq -m$.

 (b) and (c): The proof of these assertions are similar to (a). We simply localize at a minimal prime $P$ of $\M_i$  where $i$ is appropriately  chosen and apply our standard technique \ref{referee}.
  The result then follows by Theorem \ref{tame-body} and Theorem \ref{rigid-body}.
\end{proof}

\section{Bass numbers}
In this section we give proofs of Theorems \ref{bass-basic}, \ref{bass-growth} and \ref{bass-growth}.
 \s \label{set-bass} \textit{Setup:} Let $A$ be a regular ring containing a field of characteristic $p > 0$. Let $R = A[X_1,\ldots, X_m]$ be standard graded. Let $\FF$ be a graded Lyubeznik functor on $\ ^* Mod(R)$ and set $\M = \FF(R) = \bigoplus_{n \in \ZZ}\M_n$.

By example \cite[7.4]{P} it is possible that $\mu_i(P, M_n)$  the $i^{th}$-Bass number of $M_n$ \wrt \ $P$ can be infinite for some prime ideal $P$ of $A$.

We will need the following Lemma from \cite[1.4]{Lyu-1}.
\begin{lemma}\label{lyu-lemma}
Let $B$ be a Noetherian ring and let $L$ be a $B$-module ($L$ need not be finitely generated).
Let $P$ be a prime ideal in $B$. If $(H^j_P(L))_P$ is injective for all $j \geq 0$ then
$\mu_j(P,L) = \mu_0(P,H^j_P(L))$ for all $j \geq 0$.
\end{lemma}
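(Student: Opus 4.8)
The plan is to reduce to a local situation and then argue with an injective resolution of $L$.

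\emph{Reduction to the local case.} Each of the three quantities in the statement is defined purely in terms of the localization at $P$: indeed $\mu_j(P,L)=\dim_{\kappa(P)}\Ext^j_{B_P}(\kappa(P),L_P)$, one has $(H^j_P(L))_P=H^j_{PB_P}(L_P)$ by flat base change, and similarly for $\mu_0(P,H^j_P(L))$. Hence I may replace $B$ by $B_P$ and assume $(B,\m)$ is local with $P=\m$; write $k=B/\m$. The hypothesis now reads: $H^j_\m(L)$ is an injective $B$-module for every $j\geq 0$.

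\emph{The torsion subcomplex.} Fix an injective resolution $0\rt L\rt E^0\rt E^1\rt\cdots$ and put $C^\bullet=\Gamma_\m(E^\bullet)$. By Matlis' structure theory each $E^j$ is a direct sum of indecomposable injectives $E_B(B/Q)$; since $\Gamma_\m$ commutes with direct sums and $\Gamma_\m(E_B(B/Q))$ equals $E_B(B/Q)$ when $Q=\m$ and $0$ otherwise, each $C^j$ is a direct sum of copies of $E_B(k)$, hence injective (the Noetherian hypothesis, via Bass--Papp). By left-exactness of $\Gamma_\m$ one has $H^j(C^\bullet)=H^j_\m(L)$. Moreover the image of any $B$-linear map out of $k$ is killed by $\m$, so the inclusion $C^\bullet\hookrightarrow E^\bullet$ induces an isomorphism of complexes $\Hom_B(k,C^\bullet)\xrightarrow{\ \cong\ }\Hom_B(k,E^\bullet)$; consequently $\Ext^j_B(k,L)=H^j\big(\Hom_B(k,C^\bullet)\big)$ for all $j$.

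\emph{Splitting the complex of injectives.} Now $C^\bullet$ is a complex of injective modules, concentrated in nonnegative degrees, whose cohomology modules $H^j_\m(L)$ are injective by hypothesis. I claim any such complex is isomorphic, as a complex, to the direct sum of a contractible complex of injectives and the complex $\bigoplus_j H^j_\m(L)[-j]$ with zero differential. To see this, let $Z^j$ be the cycles and $B^j$ the boundaries in degree $j$; the exact sequence $0\rt B^j\rt Z^j\rt H^j_\m(L)\rt 0$ splits, and an induction on $j$, based at $B^j=0$ for $j<0$, shows that $B^j$ --- hence $Z^j=B^j\oplus H^j_\m(L)$, hence the summand $B^{j+1}$ of $C^j$ appearing in the split exact sequence $0\rt Z^j\rt C^j\rt B^{j+1}\rt 0$ --- is injective for every $j$. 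Reassembling these splittings exhibits $C^j\cong B^j\oplus H^j_\m(L)\oplus B^{j+1}$ with the differential carrying the $B$-summands identically onto one another, which is precisely the asserted decomposition.

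\emph{Conclusion and main obstacle.} Applying the additive functor $\Hom_B(k,-)$ turns the contractible summand into a contractible complex and leaves the zero-differential summand degreewise intact, so
\[
\Ext^j_B(k,L)=H^j\big(\Hom_B(k,C^\bullet)\big)\cong\Hom_B\big(k,H^j_\m(L)\big)
\]
for all $j$; taking $\dim_k$ gives $\mu_j(P,L)=\mu_0(P,H^j_\m(L))=\mu_0(P,H^j_P(L))$. The delicate point is the splitting step: the injective resolution is bounded only below, so $C^\bullet$ is genuinely unbounded, and Noetherianness is used essentially twice --- to ensure the (possibly infinite) direct sums of copies of $E_B(k)$ that occur as the $C^j$ are again injective, and to make the degreewise splitting induction legitimate. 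A slicker but heavier alternative avoids the splitting entirely: since $k$ is supported at $\m$ one has $R\Hom_B(k,L)\simeq R\Hom_B\big(k,R\Gamma_\m(L)\big)$, and the resulting hyper-$\Ext$ spectral sequence $E_2^{p,q}=\Ext^p_B(k,H^q_\m(L))\Rightarrow\Ext^{p+q}_B(k,L)$ collapses at $E_2$ as soon as each $H^q_\m(L)$ is injective.
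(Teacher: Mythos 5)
Your proof is correct. The paper gives no proof of this lemma; it simply cites Lyubeznik \cite[1.4]{Lyu-1}, and your argument --- reduce to the local case, pass to the $\m$-torsion subcomplex $C^\bullet=\Gamma_\m(E^\bullet)$ of an injective resolution, and split a complex of injectives with injective cohomology into a contractible complex plus its cohomology with zero differential --- is essentially the argument of that reference. One presentational point: the splitting of $0 \rt B^j \rt Z^j \rt H^j_\m(L) \rt 0$ follows from the inductively established injectivity of $B^j$ (not from the injectivity of $H^j_\m(L)$), so the induction hypothesis should be stated before that splitting is invoked; with that reordering the induction is airtight.
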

The following result shows that the hypothesis of Lemma \ref{lyu-lemma} is satisfied in our case:
\begin{proposition}\label{lyu-lemma-hypoth}
(with hypotheses as in \ref{set-bass}).  Let $P$ be a prime ideal in $A$.  Let $E = \M_c$ and let $P$ be a prime ideal in $A$. Then $(H^j_P(L))_P$ is injective for all $j \geq 0$
\end{proposition}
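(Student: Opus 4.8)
\textbf{Proof proposal for Proposition \ref{lyu-lemma-hypoth}.}

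The plan is to reduce the statement, via the standard completion trick used elsewhere in the paper, to a situation where $A$ is a complete regular local ring with $P$ its maximal ideal, and then invoke the structure theory of $F$-finite modules. First I would observe that $H^j_P(E)_P \cong H^j_{PA_P}(E_P)$, so it suffices to work over $A_P$; moreover, since $E = \M_c$ is a graded component of an $F_R$-finite module, by \cite[1.3, 2.9]{Lyu-2} (applied to the flat maps of \ref{std-op}) the localization $E_P$ carries a natural structure of an $F_{\widehat{A_P}}$-finite module (one should note here that $E_P$ is supported only at the maximal ideal when $P$ is minimal in $\Supp E$; in general one still gets an $F_{A_P}$-finite, hence $F_{\widehat{A_P}}$-finite, module after completion). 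Thus replacing $A$ by $B = \widehat{A_P}$ and $E$ by $E \otimes_{A_P} B$, I may assume $(A,P)$ is a complete regular local ring and $E$ is an $F_A$-finite module.

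The key step is then: for an $F_A$-finite module $E$ over a regular local ring $A$ with maximal ideal $P$, each local cohomology module $H^j_P(E)$ is an $F_A$-finite module (local cohomology functors $H^j_P(-)$ are built from Lyubeznik functors, or more directly one checks $H^j_P$ commutes with the Frobenius functor $F$ on the category of $F$-modules, cf.\ the behaviour of $\Gamma_P$ and the exactness properties), and an $F_A$-finite module supported only at the maximal ideal $P$ is a finite direct sum of copies of $E_A(A/P)$, hence is injective. This last fact is exactly the ingredient already used in \ref{support-m}(a) and in the standard technique \ref{referee}: an $F$-finite module supported at a point is $E_A(A/P)^\alpha$. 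Since $H^j_P(E)$ is $P$-torsion, it is automatically supported only at $P$, so it is injective; in particular $(H^j_P(E))_P = H^j_P(E)$ is injective over $A_P = A$.

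To assemble the argument I would: (1) reduce to $B = \widehat{A_P}$ local as above, noting $(H^j_P(E))_P$ is unchanged and injectivity can be checked after this faithfully flat local completion; (2) cite that $E_P \otimes B$ is $F_B$-finite; (3) show $H^j_{P}(-)$ sends $F_B$-finite modules to $F_B$-finite modules (this is where one uses that $H^j_P$ is a Lyubeznik functor, or an iterated cone of them, together with \cite[2.14]{Lyu-2}); (4) observe each $H^j_P(E\otimes B)$ is $P$-torsion, hence supported only at the maximal ideal, hence equals $E_B(B/PB)^{\alpha_j}$ for some ordinal $\alpha_j$; (5) conclude each such module is injective. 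The main obstacle I anticipate is step (3) together with the bookkeeping in step (1): one must be careful that the grading and the flat-base-change formalism of \ref{std-op} genuinely produce an $F$-finite module after localizing and completing, and that ``supported only at $P$'' really does force the $F$-finite module to be a (possibly infinite) sum of copies of $E_B(B/PB)$ — this is precisely the content of \cite[Lemma, p.~208]{Lyu-inj} / \cite[5.6]{MaZhang} invoked in \ref{support-m}, so once that is in hand the injectivity is immediate.
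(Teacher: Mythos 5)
There is a genuine gap at the heart of your reduction. You treat $E=\M_c$ (after localizing at $P$ and completing) as an $F_{\widehat{A_P}}$-finite module over the coefficient ring, and then want to apply the structure theory of $F$-finite modules to $H^j_P(E)$. But a graded component of an $F_R$-finite module is only an $A$-module (more precisely a $\D_k(A)$-module); it is \emph{not} an $F_A$-finite module in general. If it were, Lyubeznik's theory would force all Bass numbers $\mu_j(P,\M_c)$ to be finite, contradicting exactly the phenomenon that Theorem \ref{bass-basic}(i) allows and that occurs in \cite[7.4]{P}. For the same reason your step (3) has nothing to act on: $H^j_P(-)$ does send $F$-finite modules to $F$-finite modules over the ring where $P$ lives, but the input $E_P\otimes\widehat{A_P}$ is not one. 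Moreover, if the module really were $F$-finite and supported only at the closed point, your step (4) would produce a \emph{finite} direct sum of copies of $E_B(K)$, which is stronger than what is true; the possibly infinite exponent $\alpha$ is unavoidable here.

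The paper's proof stays inside the graded theory over $R$ precisely to sidestep this: it observes that $H^j_{PR}\circ\FF$ is again a graded Lyubeznik functor on $\ ^*Mod(R)$ and that $H^j_P(\M_c)=(H^j_{PR}(\FF(R)))_c$, so the module in question is itself a graded component of a graded $F_R$-finite module (and either $H^j_P(E)_P=0$ or $P$ is a minimal prime of $H^j_P(E)$). Then the standard technique \ref{referee} applies, and the structure result invoked is \ref{support-m}(a), whose proof goes through the $\D_k(A)$-module --- not $F_A$-module --- structure of the component together with Lyubeznik's characteristic-free injectivity lemma \cite[Lemma, p.~208]{Lyu-inj}, yielding $E_B(K)^\alpha$ with $\alpha$ possibly infinite, and finally $E_B(K)\cong E_A(A/P)$ as $A$-modules. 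You do cite the correct structure lemma at the end, so the repair is to replace your steps (2)--(3) by the identification $H^j_P(\M_c)=(H^j_{PR}\circ\FF)(R)_c$ and to invoke \ref{support-m} for graded components, rather than appealing to $F$-finiteness over the coefficient ring.
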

\begin{proof}
Fix $j \geq 0$.
We note that $H^j_{PR}\circ\FF$ is a graded Lyubeznik functor on $\ ^*Mod(R)$. Also notice $H^j_{P}(E) = (H^j_{PR}(\FF(R)))_c$. Finally note that either
$H^j_P(E)_P = 0$ or $P$ is a minimal prime of $H^j_P(L)$.

We have nothing to show if $H^j_P(E)_P = 0$. So assume  $H^j_P(E)_P \neq 0$. We apply our standard technique to $H^j_{PR}\circ\FF$, see \ref{referee}.
 Notice  by \ref{min-loc} we get that
 $\N_c = E_P \neq 0$.

 As $P$ is a minimal prime of $M_c$ we get that $N_c$ is supported  ONLY at the maximal ideal of $B$. By \ref{support-m} we get that $\N_c = E_B(K)^\alpha$ where
 $E_B(K)$ is the injective hull of $K$ as a $B$-module (and $\alpha$ some ordinal possibly infinite).
 But we have
 \[
 E_B(K) \cong E_{A_P}(\kappa(P)) \cong E_A(A/P) \quad \text{as $A$-modules.}
 \]
Thus $(H^j_P(L))_P$  is an injective $A$-module.
\end{proof}

We now give
\begin{proof}[Proof of Theorem \ref{bass-basic}]
Let $P$ be a prime ideal in $A$. Fix $j \geq 0$. Suppose if possible $\mu_j(P, \M_c) < \infty$ for some $c \in \ZZ$. We show that  $\mu_j(P, \M_n) < \infty$ for all $n \in \ZZ$.

By Lemma \ref{lyu-lemma-hypoth} and Proposition \ref{lyu-lemma} we get
that $\mu_j(P, \M_n) = \mu_0(P, H^j_P(\M_n))$ for all $n \in \ZZ$.
We note that $(H^j_{PR}\circ \FF)(R)_n = H^j_P(\M_n)$ for all $n \in \ZZ$. Furthermore $\mathcal{E} = H^j_{PR}\circ\FF$ is a graded Lyubeznik functor on $\ ^*Mod(R)$. We apply our standard technique \ref{referee}.

 Notice  by \ref{min-loc} we get that
 $\N_n = (H^j_P(\M_n))_P $ for all $n \in \ZZ$.

 Note that either $(H^j_P(\M_n))_P  = 0$ OR
  $P$ is the minimal prime of $H^j_P(\M_n)$. Thus we get that $\N_n$ is supported  ONLY at the maximal ideal of $B$ for all $n \in \ZZ$. By \ref{support-m} we get that $\N_n = E_B(K)^{\alpha_n}$ where
 $E_B(K)$ is the injective hull of $K$ as a $B$-module (and $\alpha$ some ordinal possibly infinite). By \ref{referee} we may assume $K$ is infinite. We note that
 \begin{enumerate}
 \item
 $\alpha_n = \mu_j(P, \M_n)$ for all $n \in \Z$.
 \item
 $\alpha_c < \infty$.
 \end{enumerate}

    By \ref{main-koszul-body}  we get that
$V = H_g(Y_1, \ldots,Y_g;  \N)$ is $F_D$-finite where $D = K[X_1,\ldots, X_m]$ with $V_n = H_d(\bY, N)_n = K^{\alpha_n}$ for all $n \in \ZZ$.

Now $\dim_K V_c = \alpha_c < \infty$. By Proposition \ref{len-body}     we get $\dim_K V_n < \infty $ for all $n \in \ZZ$. It follows that $\mu_j(P, M_n) = \dim_K V_n < \infty$ for all $n \in \ZZ$.  Finally we note that the assertions (a), (b), (c), (d) and (e) follow from Theorems \ref{tame-body} and \ref{rigid-body}.
 \end{proof}

We now indicate
\begin{proof}[Proof of Theorem \ref{bass-m-one}]
We do same construction as in proof of \ref{bass-basic}. We use \ref{len-m-1} to conclude.
\end{proof}
Finally we give
\begin{proof}[Proof of Theorem \ref{bass-growth}]
We do same construction as in proof of \ref{bass-basic}. We use \ref{growth-body}   to conclude.
\end{proof}

\section{Associate Primes}
In this section we prove Theorem \ref{ass}.
 To prove this theorem we need a generalization of an exercise problem from
Matsumura's  classic text \cite[Exercise 6.7]{Mat}.
\begin{proposition}\label{M-ex}
Let $f \colon A \rt B$ be a homomorphism of Noetherian rings. Let $M$ be an $B$-module. Then
\[
\Ass_A M  = \{ P\cap A \mid P \in \Ass_B M \}.
\]
In particular if $\Ass_B M$ is a finite set then so is $\Ass_A M$.
\end{proposition}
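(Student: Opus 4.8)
The plan is to reduce to the classical exercise \cite[Exercise 6.7]{Mat}, which covers the case where $M$ is finitely generated over $B$, by replacing $M$ with the submodules $Bx$ generated by single elements. First I would prove the inclusion $\{P\cap A \mid P\in\Ass_B M\}\subseteq \Ass_A M$: if $P=\ann_B(x)$ for some $x\in M$, I want to show $P\cap A\in\Ass_A M$. The point is that $P\cap A$ is an associated prime of the cyclic $B$-module $Bx\cong B/P$ viewed as an $A$-module. Since $Bx\cong B/P$ is a domain that is a finitely generated algebra-quotient... actually it need not be module-finite over $A$, so here is where the classical exercise enters: for the ring map $A\to B/P$ with $B/P$ a domain, one has $\Ass_A(B/P)=\{\, \mathfrak q\cap A \mid \mathfrak q\in\Ass_{B/P}(B/P)\,\}$, and $\Ass_{B/P}(B/P)=\{(0)\}$ since $B/P$ is a domain, so $\Ass_A(B/P)=\{(0)\cap A\}=\{P\cap A\}$ provided $(0)\cap A$ is prime — which it is, being the kernel of $A\to B/P$ into a domain. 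Hence $P\cap A=\ann_A(\bar 1)$ has the property that it equals $\ann_A(y)$ for a suitable $y$, giving $P\cap A\in\Ass_A(Bx)\subseteq\Ass_A M$. The one subtlety is that $\Ass_A(B/P)$ being $\{P\cap A\}$ for a domain $B/P$ over $A$ requires knowing $P\cap A$ is actually attained as an annihilator of a nonzero element, not merely that it is the generic "annihilator of $1$" — but over a domain $D$ with prime $\mathfrak p_0=\ker(A\to D)$, any nonzero $d\in D$ has $\ann_A(d)=\mathfrak p_0$, so this is immediate.

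Next I would prove the reverse inclusion $\Ass_A M\subseteq\{P\cap A\mid P\in\Ass_B M\}$. Take $\mathfrak p\in\Ass_A M$, say $\mathfrak p=\ann_A(x)$ with $x\in M$, $x\neq 0$. Consider the $B$-module $N=Bx\subseteq M$. Localizing $A$ at $\mathfrak p$ is harmless for detecting the associated prime, but the cleaner route is: among the (nonempty) set $\Ass_B N$ pick any $P$; I claim $\mathfrak p\subseteq P\cap A$. Indeed $\mathfrak p=\ann_A x$ annihilates all of $N=Bx$, hence $\mathfrak p\subseteq\ann_A(z)$ for every $z\in N$; in particular $\mathfrak p\subseteq P\cap A$ for the $P=\ann_B z\in\Ass_B N$. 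To get equality I use that $\mathfrak p$ is maximal among annihilators of elements of $M$ of the form $\ann_A(\text{something})$... this needs care, so instead I would argue: $\mathfrak p A_{\mathfrak p}\in\Ass_{A_{\mathfrak p}}(M_{\mathfrak p})$, and $M_{\mathfrak p}$ is a module over $B_{\mathfrak p}:=B\otimes_A A_{\mathfrak p}$; by the first inclusion (applied to $A_{\mathfrak p}\to B_{\mathfrak p}$) together with the fact that associated primes localize, it suffices to produce $Q\in\Ass_{B_{\mathfrak p}}(M_{\mathfrak p})$ with $Q\cap A_{\mathfrak p}=\mathfrak p A_{\mathfrak p}$, and such a $Q$ exists because $\Ass_{B_{\mathfrak p}}(M_{\mathfrak p})\neq\varnothing$ (as $M_{\mathfrak p}\neq 0$) and every element of it contracts into $\{\mathfrak p A_{\mathfrak p}\}$ by the containment just shown, once one observes that at least one associated prime of $M_{\mathfrak p}$ over $A_{\mathfrak p}$ equals $\mathfrak p A_{\mathfrak p}$ rather than something smaller — but over $A_{\mathfrak p}$ the only candidate containing the maximal ideal-annihilator is $\mathfrak p A_{\mathfrak p}$ itself. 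Finally, un-localizing, $Q$ corresponds to a $P\in\Ass_B M$ with $P\cap A=\mathfrak p$.

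The "in particular" is then immediate: the contraction map $\Ass_B M\to\Spec A$, $P\mapsto P\cap A$, is surjective onto $\Ass_A M$, so if $\Ass_B M$ is finite then $\Ass_A M$ is finite.

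The main obstacle I anticipate is the reverse inclusion, specifically the step where one must guarantee that $\mathfrak p$ (not just some prime below a chosen $P\in\Ass_B M$) is realized as $P\cap A$ for an honest associated prime $P$ of $M$ over $B$. The localization trick handles this cleanly, but one must be careful that $\Ass$ commutes with localization (true, in full generality, for $\Ass$ as opposed to weakly associated primes, since $A$ is Noetherian) and that the classical finite-module exercise is genuinely applicable to $B_{\mathfrak p}$, which is Noetherian as a localization of the Noetherian ring $B$. Everything else is bookkeeping with annihilators.
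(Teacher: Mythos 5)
Your forward inclusion is correct (and can be shortened: if $P=\ann_B(x)$ then directly $\ann_A(x)=f^{-1}(P)$, which is prime, so $P\cap A\in\Ass_A M$ with no appeal to the finitely generated case at all). The reverse inclusion, however, contains a step that is false as stated: you claim that every element of $\Ass_{B_{\mathfrak{p}}}(M_{\mathfrak{p}})$ contracts to $\mathfrak{p}A_{\mathfrak{p}}$ ``by the containment just shown.'' The containment you actually established was for associated primes of the cyclic submodule $N=Bx$, not of all of $M$; for the full module this fails. For instance, with $A=B=k[t]$, $M=B\oplus B/(t)$ and $\mathfrak{p}=(t)$, the module $M_{\mathfrak{p}}$ has $(0)$ as an associated prime over $B_{\mathfrak{p}}$, and $(0)$ contracts to $(0)\subsetneq \mathfrak{p}A_{\mathfrak{p}}$. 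So ``every element of it contracts into $\{\mathfrak{p}A_{\mathfrak{p}}\}$'' is not available, and your argument that the required $Q$ exists does not go through as written.

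The repair is already implicit in your own setup: stay with the cyclic submodule. Since $\mathfrak{p}=\ann_A(x)$ annihilates every element of $N=Bx$, every $P\in\Ass_B N$ satisfies $P\cap A\supseteq\mathfrak{p}$; after localizing at $\mathfrak{p}$ (where $\mathfrak{p}A_{\mathfrak{p}}$ is maximal) the nonempty set $\Ass_{B_{\mathfrak{p}}}(B_{\mathfrak{p}}x)$ consists of primes contracting exactly to $\mathfrak{p}A_{\mathfrak{p}}$, and these pull back to elements of $\Ass_B M$ contracting to $\mathfrak{p}$. Even simpler, and surely what the paper's one-line remark intends: $N=Bx$ is a finitely generated $B$-module with $\mathfrak{p}\in\Ass_A N$, so Matsumura's exercise applied to $N$ produces $P\in\Ass_B N\subseteq\Ass_B M$ with $P\cap A=\mathfrak{p}$, and no localization is needed. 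With that one step replaced, your proof is complete; the ``in particular'' clause follows from surjectivity of contraction exactly as you say.
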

\begin{remark}
Matsumura's exercise is to prove the above result for finitely generated $B$-modules. Using this fact Proposition \ref{M-ex} can be easily proved. \end{remark}

\s We note that $P \in \Ass_A V$ if and only if $\mu_0(P, V) > 0$.
 We now give
 \begin{proof}[Proof of Theorem \ref{ass}]
 By \cite[2.12]{Lyu-2} it follows that  $\Ass_R \FF(R)$ is finite.

 (1) This follows from Proposition \ref{M-ex}.

 For (2), (3) let
 \[
 \bigcup_{n \in \ZZ} \Ass_A M_n  = \{ P_1, \ldots, P_l \}.
 \]

 (2) Let $P = P_i$ for some $i$.
 Let $r \leq -m$. Then by Theorem  \ref{bass-basic} it follows that $\mu_0(P, M_r) > 0$ if and only if $\mu_0(P, M_{-m}) > 0$.   The result follows.

 (3)Let $P = P_i$ for some $i$.
 Let $s \geq 0$. Then by Theorem  \ref{bass-basic} it follows that $\mu_0(P, M_s) > 0$ if and only if $\mu_0(P, M_{0}) > 0$.   The result follows.
 \end{proof}

\section{Dimension of Support and injective dimension}
We first show
\begin{lemma}\label{injdim-dim}
(with hypotheses as in \ref{std}). Let $c \in \ZZ$. Then
\[
\injdim M_c \leq \dim M_c.
\]
\end{lemma}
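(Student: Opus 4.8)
The plan is to reduce to the case where $A$ is complete local with infinite residue field, supported only at its maximal ideal, via the standard technique of \ref{referee}. Concretely, suppose $\M_c \neq 0$; we must bound $\injdim_A \M_c$ by $\dim_A \M_c$. Since injective dimension can be checked after localizing at each prime $P \in \Supp_A \M_c$ (indeed $\injdim_A \M_c = \sup_P \injdim_{A_P} (\M_c)_P$) and since $\dim (\M_c)_P \leq \dim_A \M_c$, it suffices to prove the inequality $\injdim_{A_P}(\M_c)_P \leq \dim_A (\M_c)_P$ for $P$ a minimal prime of $\M_c$ (for non-minimal $P$ one would need to run the same argument after replacing $A$ by $A_P$, so really the cleanest route is: it suffices to prove $\injdim_{A_P}(\M_c)_P \leq \dim_{A_P}(\M_c)_P$ for every $P$, and then take the supremum). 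After passing to $B = \widehat{A_P}$ and then enlarging the residue field to an infinite field $K'$, we arrive at a graded $F$-finite module $\LL = \M \otimes_A C$ over $C = K'[[Y_1,\ldots,Y_g]]$, and we are reduced to showing the statement for $\LL_c$, a $C$-module.

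The key input is then the Koszul description of Bass numbers together with the main theorem. By Proposition \ref{lyu-lemma-hypoth} and Lemma \ref{lyu-lemma}, for a prime $Q$ of $C$ we have $\mu_j(Q, \LL_c) = \mu_0(Q, H^j_Q(\LL_c))$, and applying the standard technique to the graded Lyubeznik functor $H^j_{QC[\bX]}\circ(-)$ we can compute each $\mu_j(Q,\LL_c)$ as $\dim_{K''} V$ where $V = H_g(\bY; \N)$ for an appropriate extension, which is $F_D$-finite. The crucial point is that $\injdim \LL_c < \infty$ follows at all, and that the length of the minimal injective resolution is controlled by where the Bass numbers vanish: $\injdim_C \LL_c = \sup\{ j : \mu_j(Q,\LL_c) \neq 0 \text{ for some } Q\}$. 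One then needs to show $\mu_j(Q, \LL_c) = 0$ whenever $j > \dim (\LL_c/Q\LL_c)$ or more precisely $j > \dim_C \LL_c$. For this I would argue that $H^j_Q(\LL_c) = 0$ for $j > \dim_{C_Q}(\LL_c)_Q$: localize, so $(\LL_c)_Q$ is $F$-finite over $C_Q$ supported only at the maximal ideal (after passing to a minimal prime of $H^j_Q(\LL_c)$), hence a direct sum of copies of the injective hull, and such a module is nonzero in the relevant local cohomology only in the top degree $= \dim$ of the support, which is at most $\dim_C \LL_c$.

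So the skeleton is: (1) reduce to $C = K'[[Y_1,\ldots,Y_g]]$ with $\LL_c$ an $F$-finite-module component, via \ref{referee}, noting both $\injdim$ and $\dim$ are compatible with the localizations and completions performed; (2) recall $\injdim_C \LL_c = \sup\{j : H^j_Q(\LL_c)_Q \neq 0 \text{ for some prime } Q\}$ once we know $\injdim$ is finite, which itself comes from the Bass-number finiteness machinery of Section 9 applied componentwise --- actually here one must be a little careful, since $\mu_j$ may be infinite; but $\injdim$ finiteness is about \emph{vanishing} of $\mu_j$ for large $j$, which holds because $H^j_Q(\LL_c) = 0$ for $j$ exceeding the dimension of support; (3) prove the vanishing $H^j_Q(\LL_c) = 0$ for $j > \dim_{C_Q}(\LL_c)_Q$ by localizing at a minimal prime of $H^j_Q(\LL_c)$, invoking \ref{support-m} to see that the localized module is a sum of injective hulls, and using that local cohomology of an injective hull $E_{C'}(k')$ with support in the maximal ideal is concentrated in degree $\dim C'$; (4) assemble: $\injdim_C \LL_c \leq \sup_Q \dim_{C_Q}(\LL_c)_Q \leq \dim_C \LL_c$.

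The main obstacle I anticipate is step (3), specifically the bookkeeping that at a minimal prime $Q$ of $H^j_Q(\LL_c)$ the module $(\LL_c)_Q$ really is supported only at the maximal ideal of $C_Q$ in the way required, so that \ref{support-m} (or rather its local analogue: the Lyubeznik result that an $F$-finite module over a complete regular local ring supported at the maximal ideal is a finite direct sum of injective hulls) applies, and then that the resulting dimension of the support of $\LL_c$ localized is genuinely $\leq \dim_C \LL_c$ rather than something larger. A secondary subtlety is ensuring the reduction in step (1) does not change the two quantities being compared: completion is faithfully flat with zero-dimensional closed fiber so it preserves $\dim$ of support of a module supported at the maximal ideal, and the residue-field extension $B \to C$ is likewise flat with trivial fiber, so both $\injdim$ and $\dim$ transport correctly --- but this needs to be stated carefully since in general flat base change can raise injective dimension; here it does not because the fibers are fields.
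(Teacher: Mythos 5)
Your central identity is the right one and matches the paper: by Proposition \ref{lyu-lemma-hypoth} the hypothesis of Lemma \ref{lyu-lemma} holds, so $\mu_j(P,\M_c)=\mu_0(P,H^j_P(\M_c))$ for every prime $P$ of $A$. But the rest of your argument both over-complicates and, in step (3), contains a genuine gap. The vanishing you need, $H^j_P(\M_c)=0$ for $j>\dim \M_c$, is exactly Grothendieck's vanishing theorem, which holds for \emph{arbitrary} (not necessarily finitely generated) modules over a Noetherian ring (see \cite[6.1.2]{BS}). Once you quote it, the lemma is finished in three lines: $\mu_j(P,\M_c)=\mu_0(P,H^j_P(\M_c))=0$ for all $j>\dim\M_c$ and all $P$, hence $\injdim\M_c\le\dim\M_c$. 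No reduction to $K'[[Y_1,\dots,Y_g]]$, no base change, and none of the attendant worries about whether flat extensions preserve $\injdim$ and $\dim$ are needed. This is precisely the paper's proof.

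The concrete gap in your step (3): you assert that after "passing to a minimal prime of $H^j_Q(\LL_c)$" the module $(\LL_c)_Q$ is supported only at the maximal ideal and hence is a direct sum of copies of the injective hull. That conflates two different modules. Localizing at a minimal prime $Q'$ of $H^j_Q(\LL_c)$ makes $H^j_Q(\LL_c)_{Q'}$ supported only at the closed point of $C_{Q'}$ (this is what Proposition \ref{lyu-lemma-hypoth} exploits, applied to the Lyubeznik functor $H^j_{QR}\circ\FF$, whose degree-$c$ piece is the local cohomology module, not $\LL_c$ itself); it does \emph{not} make $(\LL_c)_{Q'}$ supported only at the closed point, and in general $(\LL_c)_{Q'}$ is not a sum of injective hulls. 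So the decomposition you want to invoke to prove the vanishing is unavailable, and the argument as written does not close. The fix is simply to drop steps (1)--(3) of your skeleton and replace the whole vanishing argument by the citation of Grothendieck vanishing applied to $\M_c$ directly.
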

\begin{proof}
Let $P$ be a prime ideal in $A$. Then by Proposition \ref{lyu-lemma-hypoth} and Lemma \ref{lyu-lemma} we get
\[
\mu_j(P, M_c) = \mu(P, H^j_P(M_c)).
\]
By Grothendieck vanishing theorem $H^j_P(M_c) = 0$ for all $j > \dim M_c$, see \cite[6.1.2]{BS}.  So $\mu_j(P, M_c) = 0$ for all $j > \dim M_c$. The result follows.
\end{proof}
We now give
\begin{proof}[Proof of Theorem \ref{injdim-and-dim}]
(1) This follows from Lemma \ref{injdim-dim}.

For (2), (3) let $P$ be a prime ideal in $A$.  Let $c \leq -m$.

(2) Fix $j \geq 0$. By
Theorem \ref{bass-basic} we get that $\mu_j(P, M_c)> 0$ if and only if
$\mu_j(P, M_{-m})> 0$. The result follows.

(3) We note that $\M_P = H(S)$ for some graded Lyubeznik functor $H$ on $S$ where $S = A_P[X_1,\ldots, X_m]$. By Theorem \ref{tame} it follows that $(M_{-m})_P \neq 0$ if and only if $(M_c)_P \neq 0$. The result follows.

(4), (5), 6(a) follow with similar arguments as in (2), (3).

For 6(b),(c)
let $P$ be a prime ideal in $A$.

6(b) By
Theorem \ref{bass-basic} we get that if $\mu_j(P, M_r)> 0$ then
$\mu_j(P, M_{-m})> 0$ and  $\mu_j(P, M_{0})> 0$. The result follows.

6(c) We note that $\M_P = H(S)$ for some graded Lyubeznik functor $H$ on $S$ where $S = A_P[X_1,\ldots, X_m]$.  By Theorem \ref{tame} it follows that $(M_{r})_P \neq 0$  implies \\ $(M_{-m})_P \neq 0$ and $(M_{0})_P \neq 0$ . The result follows.
\end{proof}

\section{Infinite generation}
In this section we prove Theorem \ref{inf-gen}. We will make the assumption $A$ is a domain to avoid trivial exceptions. For the convenience of reader we restate Theorem \ref{inf-gen}
\begin{theorem}\label{inf-gen-proof}(with hypotheses as in \ref{std}). Further assume $A$ is a domain. Assume $I \cap A \neq 0$. If $H^i_I(R)_c \neq 0$ then
$H^i_I(R)_c$ is NOT finitely generated as an $A$-module.
\end{theorem}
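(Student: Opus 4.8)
The plan is to argue by contradiction: if some graded component $H^i_I(R)_c$ were finitely generated over $A$, then the hypothesis $I \cap A \neq 0$ would force its support into a proper closed subset of $\Spec A$, and after localizing and completing at a minimal prime of that component the standard technique~\ref{referee} would present it as a (sum of copies of the) injective hull of a residue field over a complete regular local ring of \emph{positive} dimension --- which cannot be finitely generated.

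First I would set $\M = H^i_I(R)$, a graded $F_R$-finite module, fix $0 \neq a \in I \cap A$, and recall that every local cohomology module is $I$-torsion, so each homogeneous element of $\M$ is killed by a power of $a$. Suppose $\M_c \neq 0$ is finitely generated over $A$; choosing finitely many generators gives $a^N \M_c = 0$ for some $N$, so $a^N \in \ann_A \M_c$ and hence $\Supp_A \M_c = V(\ann_A \M_c) \subseteq V(aA)$. Since $A$ is a domain and $a \neq 0$, the zero ideal does not contain $\ann_A\M_c$, so $(0) \notin \Supp_A \M_c$; in particular every minimal prime $P$ of $\M_c$ satisfies $\height_A P \geq 1$.

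Next I would run the standard technique~\ref{referee} for this $P$: it produces an infinite field $K'$, an integer $g = \height_A P \geq 1$, the complete regular local ring $C = K'[[Y_1,\dots,Y_g]]$, the ring $T = C[X_1,\dots,X_m]$, and a graded $F_T$-finite module $\LL = \M \otimes_A C$ with $\LL_c = E_C(K')^{\alpha}$ for some ordinal $\alpha$ (here $E_C(K')$ is the injective hull of the residue field of $C$); moreover $\LL_c = \M_c \otimes_A C \neq 0$, so $\alpha \geq 1$. On the other hand, $\M_c$ being finitely generated over $A$ makes $\LL_c = \M_c \otimes_A C$ finitely generated over $C$. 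But $C$ is regular local of dimension $g \geq 1$, so $E_C(K') \cong H^g_{\mathfrak{n}}(C)$ ($\mathfrak{n}$ the maximal ideal of $C$) is not a finitely generated $C$-module, and therefore neither is $E_C(K')^{\alpha}$ for $\alpha \geq 1$, since a direct summand of a finitely generated module over a Noetherian ring is finitely generated. This contradiction shows $H^i_I(R)_c$ is not finitely generated over $A$.

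The only points needing care are that the standard technique genuinely applies --- i.e.\ that after localizing at a minimal prime $P$ of $\M_c$ and completing, the component $\M_c$ remains nonzero and becomes supported only at the closed point (this is exactly where minimality of $P$ enters), and that finite generation is inherited along the flat base changes $A \to A_P \to \widehat{A_P} \to C$; both are routine. The conceptual heart, and the only place the hypothesis $I \cap A \neq 0$ is used, is the elementary observation that it forces $\height_A P \geq 1$, so the complete local ring we land in has positive dimension --- precisely what is incompatible with a finitely generated module being (a sum of copies of) an injective hull of the residue field.
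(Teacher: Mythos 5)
Your proof is correct, but it follows a genuinely different route from the paper's. The paper's argument is shorter and runs through injective dimension: assuming $E = H^i_I(R)_c$ is finitely generated and localizing at a prime $P$ in its support, it invokes the Bass equality $\depth A_P = \injdim_{A_P} E_P$ for finitely generated modules of finite injective dimension \cite[3.1.17]{BH}, combines it with $\injdim E_P \leq \dim E_P$ (Lemma \ref{injdim-dim}) and the Cohen--Macaulayness of the regular local ring $A_P$ to force $\dim E_P = \dim A_P$, and then gets a contradiction because $E_P$ is torsion with respect to the nonzero ideal $(I\cap A)_P$, so $\dim E_P < \dim A_P$. You instead apply the standard technique \ref{referee} and Lemma \ref{support-m} directly: the minimal prime $P$ of $\M_c$ has $\height P \geq 1$ because $\Supp_A \M_c \subseteq V(a)$ for $0 \neq a \in I\cap A$ (note that for this containment you do not even need the uniform bound $a^N\M_c = 0$; $a$-power torsion of each element already kills the localization at any prime not containing $a$), and then the completed localized component is a nonzero direct sum of copies of $E_C(K')$ over a complete regular local ring of positive dimension, which is never finitely generated. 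Both arguments ultimately rest on the same structural fact (Lemma \ref{support-m}, which also underlies Lemma \ref{injdim-dim} via Proposition \ref{lyu-lemma-hypoth}), but your version avoids the machinery of Bass numbers and the $\injdim = \depth$ theorem in favour of the elementary observation that injective hulls over non-Artinian local rings are not finitely generated; the paper's version is more compact because it reuses Lemma \ref{injdim-dim}, which is already established for Theorem \ref{injdim-and-dim}. The points you flag as needing care (nonvanishing after faithfully flat base change, preservation of finite generation along $A \to A_P \to \widehat{A_P} \to C$) are indeed routine and your argument is complete.
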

The proof of Theorem \ref{inf-gen-proof} is different than the proof we gave in characteristic zero.
\begin{proof}[Proof of Theorem \ref{inf-gen-proof}]
Suppose if possible $E = H^i_I(R)_c$ is finitely generated. We may localize at a prime $P$ in the support of $E$.
We have $\depth A_P = \injdim E_P \leq \dim E_P$ (here the first equality holds by \cite[3.1.17]{BH}  and the second inequality holds by \ref{injdim-dim}. But $A_P$ is regular, in particular it is Cohen-Macaulay. So $\depth A_P = \dim A_P$. So $\dim E_P = \dim A_P$. But notice $J = (I\cap A)_P  \neq 0$ and $E_P$ is $J$-torsion. In particular $\dim E_P < \dim A_P$ a contradiction.
\end{proof}

\section{Application-II}
In this section we give a proof of Theorem \ref{app-fg}

\s \label{eclair}\textbf{Setup: }Let $A = k[[Y_1, \ldots, Y_d]]$ where $k$ is an infinite field of characteristic $p > 0$ and let $R = A[X_1, \ldots, X_m]$ be standard graded. Let $I$ be a homogeneous  ideal in $R$ and let $S = R/I = \bigoplus_{n \geq 0}S_n$. Set $S_+ = \bigoplus_{n \geq 1}S_n$ be the irrelevant ideal of $S$ and let $R_+ = (X_1,\ldots, X_m)$ be the irrelevant ideal of $R$.
Let $\n$ be the unique maximal homogeneous ideal of $S$. Set
\[
\fg_\n(S) = \max \{ r \mid H^i_{\n}(S)_n = 0 \ \text{for all but finitely many $n$ and all $i < r$} \}.
\]
We restate Theorem \ref{app-fg} for the convenience of the reader,
\begin{theorem}\label{app-fg-body}
(with hypotheses as in \ref{eclair}). Let $i < \fg_n(S)$. Then \\ $H^{\dim R -i}_I(R)_n = 0$ for all $n \geq 0$.
\end{theorem}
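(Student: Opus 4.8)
The plan is to deduce the statement from two facts: that $i<\fg_{\n}(S)$ forces $\M:=H^{\dim R-i}_{I}(R)$ to be $R_{+}$-torsion, and that an $R_{+}$-torsion graded $F$-finite module over $A[X_{1},\dots,X_{m}]$ has no nonzero graded component in degrees $\ge0$ once one reduces to an infinite coefficient field. Throughout put $N=\dim R$ and $R_{+}=(X_{1},\dots,X_{m})$, and note $\M=\FF(R)$ for the graded Lyubeznik functor $\FF=H^{N-i}_{I}(-)$, so $\M$ is graded $F_{R}$-finite.

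First I would prove that $\M$ is $R_{+}$-torsion. Since $\{I^{[p^{e}]}\}_{e}$ is cofinal with $\{I^{t}\}_{t}$, we have $\M=\varinjlim_{e}\Ext^{N-i}_{R}(R/I^{[p^{e}]},R)$, so it suffices to check that each (finitely generated) module $\Ext^{N-i}_{R}(R/I^{[p^{e}]},R)$ is supported inside $V(R_{+})$. Take $Q\in\Supp_{R}\Ext^{N-i}_{R}(R/I^{[p^{e}]},R)$; then $Q\supseteq I$, and, writing $P=Q/I$, Auslander--Buchsbaum over the regular local ring $R_{Q}$ gives $N-i\le\operatorname{pd}_{R_{Q}}(R_{Q}/I^{[p^{e}]}R_{Q})=\height Q-\depth S_{P}$, where I use flatness of the Frobenius over $R$ (applied to a minimal free resolution of $R_{Q}/IR_{Q}$) to see $\operatorname{pd}_{R_{Q}}(R_{Q}/I^{[p^{e}]}R_{Q})=\operatorname{pd}_{R_{Q}}(R_{Q}/IR_{Q})$, hence $\depth(R/I^{[p^{e}]})_{Q}=\depth S_{P}$. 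Using $\height Q+\dim R/Q\le N$ and $\dim R/Q=\dim S/P$ this yields $\depth S_{P}+\dim S/P\le i<\fg_{\n}(S)$, which by the formula $\fg_{\n}(S)=\min_{P'\in\Proj(S)}\{\depth S_{P'}+\dim S/P'\}$ of \cite{HM} is impossible unless $P\supseteq S_{+}$, i.e. $Q\supseteq R_{+}$. So each such $\Ext$ is annihilated by a power of $R_{+}$, and the direct limit $\M$ is $R_{+}$-torsion.

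Then I would argue by contradiction. Suppose $\M_{c}\ne0$ for some $c\ge0$; choose a minimal prime $P$ of the $A$-module $\M_{c}$ and run the standard technique \ref{referee}: base changing along $A\rt C=K'[[Z_{1},\dots,Z_{g}]]$ with $K'$ infinite and $g=\height_{A}P$, and putting $T=C[X_{1},\dots,X_{m}]$, the module $\M\otimes_{A}C$ is graded $F_{T}$-finite (it is $\widehat{\FF}(T)$ for a graded Lyubeznik functor on $T$), is $T_{+}$-torsion (a flat base change of the $R_{+}$-torsion module $\M$), and by \ref{support-m} has degree-$c$ component $\cong E_{C}(K')^{\alpha}$ with $\alpha\ne0$. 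Now I apply Theorem \ref{main-koszul} in its case (iii) (graded version, \ref{grading}) to $V:=H_{g}(Z_{1},\dots,Z_{g};\M\otimes_{A}C)$: it is graded $F_{D}$-finite over $D=K'[X_{1},\dots,X_{m}]$, and since $V=(0:_{\M\otimes_{A}C}(Z_{1},\dots,Z_{g}))$ is killed by $(Z_{1},\dots,Z_{g})$ and is $T_{+}$-torsion, it is $\m_{D}$-torsion, with $V_{c}=(0:_{E_{C}(K')^{\alpha}}\m_{C})\ne0$ (recall $\m_{C}=(Z_{1},\dots,Z_{g})$). But by \ref{MaZ} an $\m_{D}$-torsion graded $F_{D}$-finite module is a finite direct sum of copies of $E_{D}(K')(m)$, hence vanishes in all degrees $>-m$; thus $V_{c}=0$, a contradiction. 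Hence $\M_{n}=0$ for all $n\ge0$.

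The hard part --- and the only place where characteristic $p$ is genuinely used in the first step --- is converting the cohomological hypothesis $i<\fg_{\n}(S)$ into the support statement $\Supp_{R}H^{N-i}_{I}(R)\subseteq V(R_{+})$: this works precisely because Frobenius powers $I^{[p^{e}]}$, unlike ordinary powers $I^{t}$, have the same local projective dimensions as $I$, so they preserve depth and hence the invariant $\fg$. Everything afterward is the familiar "complete at a minimal prime, enlarge the residue field, kill the remaining power-series variables with a top Koszul homology, invoke the classification of $\m$-torsion graded $F$-finite modules" pattern; the one non-routine point is that the Koszul step genuinely requires Theorem \ref{main-koszul} for a polynomial ring over a power-series ring (its case (iii)), since $C$ fails to be a field whenever $\height_{A}P\ge1$.
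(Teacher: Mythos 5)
Your proof is correct, but it takes a genuinely different route from the paper's, most markedly in the first (and decisive) step. The paper does not argue with $\fg_\n(S)$ directly: it introduces the stable Frobenius image $H^i_\n(S)^*$ and the invariant $\fg^*(S)\geq\fg_\n(S)$, and invokes (Remark \ref{lyu-c}, only sketched, via \cite{Lyu-comp}) a graded root $E_i$ of $H^{\dim R-i}_I(R)$ with $D(E_i)=H^i_\n(S)^*$; the hypothesis then forces $E_i$ to be concentrated in finitely many degrees, hence every associated prime of $H^{\dim R-i}_I(R)$ contains $R_+$, and Lemma \ref{criterion} finishes. You obtain the same support statement instead by a localized Peskine--Szpiro argument on the terms $\Ext^{\dim R-i}_R(R/I^{[p^e]},R)$ of the direct limit, using flatness of Frobenius and Auslander--Buchsbaum together with the Huckaba--Marley formula for $\fg_\n(S)$. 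This is more self-contained (it avoids the graded Matlis duality black box), at the price of proving only the stated theorem rather than the paper's stronger Theorem \ref{fg-l} with $\fg^*(S)$ in place of $\fg_\n(S)$. For the second step ($R_+$-torsion implies vanishing in degrees $\geq 0$) the paper uses Lemma \ref{criterion}, proved by induction on $\dim A$ with a localization-at-$Z_r$ trick; your argument via the standard technique \ref{referee}, the top Koszul homology $H_g(Z_1,\ldots,Z_g;-)$ and the classification \ref{MaZ} is a clean alternative resting on the same essential input, namely Theorem \ref{main-koszul} in its case (iii).

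One small point to tighten: the formula $\fg_\n(S)=\min_{P\in\Proj(S)}\{\depth S_P+\dim S/P\}$ is a minimum over \emph{homogeneous} primes not containing $S_+$, whereas your $Q$ ranges over all of $\Supp_R\Ext^{\dim R-i}_R(R/I^{[p^e]},R)$. This is harmless but should be said: the $\Ext$ module is graded, so its annihilator is homogeneous, and therefore its support meets the complement of $V(R_+)$ if and only if it contains a homogeneous prime outside $V(R_+)$ (replace $Q$ by its homogeneous interior $Q^*$). Thus you may assume $Q$, and hence $P=Q/I$, is homogeneous, so that $P\in\Proj(S)$ and the cited formula applies.
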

 The following result gives a convenient criterion when $\M_n = 0$ for all $n \geq 0$ (here $\M$ is a $F_R$-finite module). Although we are only interested when $R_0 = k[[Y_1, \ldots, Y_d]]$ it is convenient to prove a more general version.
 \begin{lemma}
   \label{criterion} Let $A$ be a regular domain  of dimension $d$ containing an infinite field $k$ of characteristic $p > 0$.  Let $R = A[X_1, \ldots, X_d]$ be standard graded. Let $\M$ be a non-zero $F_R$-finite module.
   The following assertions are equivalent:
   \begin{enumerate}[\rm (a)]
     \item $\Gamma_{R_+}(\M) = \M$
     \item  $\M_n = 0$ for $n \geq 0$.
     \item  $\M_n = 0$ for $n \gg 0$.
     \item  If $P$ is an associate prime of $\M$ then $P \supseteq R_+$.
   \end{enumerate}
 \end{lemma}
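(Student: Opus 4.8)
The plan is to prove the cycle (a) $\Rightarrow$ (b) $\Rightarrow$ (c) $\Rightarrow$ (a) together with the equivalence (a) $\Leftrightarrow$ (d). The implication (b) $\Rightarrow$ (c) is immediate. For (a) $\Leftrightarrow$ (d): if $\Gamma_{R_+}(\M) = \M$, then for a prime $\mathfrak{q}$ of $R$ not containing $R_+$ one has $\M_\mathfrak{q} = 0$ (localise and use that some $X_i$ becomes a unit while a power of $X_i$ kills every element), so $\Supp_R \M$ consists of primes containing $R_+$; in particular every associated prime of $\M$ contains $R_+$. Conversely, if every $P \in \Ass_R \M$ contains $R_+$, then by Lemma \ref{mod-G} the module $\M/\Gamma_{R_+}(\M)$ has empty set of associated primes, hence is zero (a non-zero module over a Noetherian ring has an associated prime), so $\Gamma_{R_+}(\M) = \M$. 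It therefore remains to prove (c) $\Rightarrow$ (a) and (a) $\Rightarrow$ (b).

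For (c) $\Rightarrow$ (a) I would argue contrapositively. Assume $\Gamma_{R_+}(\M) \neq \M$ and put $\ov{\M} = \M/\Gamma_{R_+}(\M)$; as in the proof of Lemma \ref{m-torsion}, $\ov{\M}$ is a non-zero graded $F_R$-finite module with $\Gamma_{R_+}(\ov{\M}) = 0$, i.e. no associated prime of $\ov{\M}$ contains $R_+ = (X_1, \ldots, X_d)$. Since $\Ass_R \ov{\M}$ is finite and $k$ is infinite, the graded prime-avoidance argument used in the proof of Theorem \ref{main-koszul-body} --- applied to the $k$-subspace $k X_1 \oplus \cdots \oplus k X_d$ of $R_1$ --- produces a linear form $\ell \in R_1$ which is a non-zerodivisor on $\ov{\M}$. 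Multiplication by $\ell$ then induces injections $\ov{\M}_n \hookrightarrow \ov{\M}_{n+1}$ for every $n$, so once $\ov{\M}_c \neq 0$ for some $c$ (such a $c$ exists because $\ov{\M} \neq 0$) we get $\ov{\M}_n \neq 0$ for all $n \geq c$, in particular for all $n \gg 0$. As $\ov{\M}_n$ is a quotient of $\M_n$, this forces $\M_n \neq 0$ for all $n \gg 0$, contradicting (c).

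For (a) $\Rightarrow$ (b) I would assume $\Gamma_{R_+}(\M) = \M$ and, for contradiction, that $\M_c \neq 0$ for some $c \geq 0$. Applying the standard technique \ref{referee} at a minimal prime of $\M_c$ (its reductions preserve $F_R$-finiteness of an arbitrary graded $F_R$-finite module by \cite[1.3, 2.9]{Lyu-2}) reduces us to $\LL = \M \otimes_A C$ over $T = C[X_1, \ldots, X_d]$, with $C = K'[[Y_1, \ldots, Y_g]]$ having infinite residue field $K'$, and furnishes $V = H_g(Y_1, \ldots, Y_g; \LL) \sub \LL$, a non-zero graded $F_D$-finite module over $D = K'[X_1, \ldots, X_d]$ with $V_c \neq 0$. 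Since $\M$ is $R_+$-torsion and $\Gamma$ commutes with the flat base changes of \ref{referee}, $\LL$ is $(X_1, \ldots, X_d)T$-torsion, hence so is its submodule $V$; thus $\Gamma_{D_+}(V) = V$ and $V$ is supported only at the graded maximal ideal $D_+$ of $D$. By \ref{MaZ}, $V$ is then a finite (non-empty) direct sum of shifted copies of the graded injective hull of $K'$, with all socle elements in degree $-d$, so $V_n = 0$ for every $n > -d$. In particular $V_c = 0$ because $c \geq 0 > -d$, contradicting $V_c \neq 0$.

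The only implication with genuine content is (a) $\Rightarrow$ (b): there is no direct structure theorem available for $R_+$-torsion $F_R$-finite modules over a general regular domain $A$, so the vanishing in non-negative degrees has to be pushed through the reduction-to-complete-base-with-infinite-residue-field machinery of \ref{referee} together with the socle-degree computation of \cite{MaZhang} recorded in \ref{MaZ}; the remaining implications are essentially formal. (Implicitly one uses $R_+ \neq 0$, i.e. that there is at least one variable; in the degenerate case $R = A$ the statement is vacuous.)
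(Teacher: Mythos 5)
Your proposal is correct. The implications (b) $\Rightarrow$ (c), (c) $\Rightarrow$ (a) (prime avoidance producing a linear non-zerodivisor on $\M/\Gamma_{R_+}(\M)$) and (a) $\Leftrightarrow$ (d) coincide in substance with the paper's arguments (there the loop runs (c) $\Rightarrow$ (d) $\Rightarrow$ (a)). The genuine difference is in (a) $\Rightarrow$ (b). The paper argues by induction on $\dim A$: after localizing at a minimal prime of $\M_0$ and completing to $C = k[[Z_1, \ldots, Z_r]]$, it applies Theorem \ref{r1} to the single element $Z_r$, uses the inductive hypothesis to see that both Koszul homologies $H_i(Z_r, \LL)$ vanish in non-negative degrees, concludes that $Z_r$ acts bijectively on $\LL_0$, and then passes to $C_{Z_r}$, which has strictly smaller dimension. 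You instead invoke the standard technique \ref{referee} once, taking Koszul homology in all $g$ variables of the complete local base in one stroke; this lands you over a polynomial ring over a field, where the Ma--Zhang socle-degree statement recorded in \ref{MaZ} forces $V_n = 0$ for $n > -d$ and contradicts $V_c \neq 0$ for $c \geq 0$. Both arguments are valid. Yours is shorter and kills every $\M_c$ with $c \geq 0$ directly, avoiding the paper's final appeal to Theorem \ref{tame}; on the other hand it uses the full-strength Koszul theorem \ref{main-koszul-body} (all $g$ variables at once), whereas the paper's induction only needs the one-variable case \ref{r1}. Your remark that the base-change steps of \ref{referee} apply to an arbitrary graded $F_R$-finite module, and not only to one of the form $\FF(R)$, is exactly the point that needs flagging, and it is justified by \cite[1.3, 2.9]{Lyu-2} as you say.
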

 \begin{proof}
   (a) $\implies$ (b). We prove this by inducting on $d = \dim A$. When $d = 0$ we get $A$ is a field. By \cite[5.6]{MaZhang}  it follows that $\M = E(k)(m)$ where $E$ is the injective hull of $k$. In particular we have $\M_n = 0$ for all $n \geq -m + 1$. Assume the result when $\dim A < r  $ and we prove the result when $\dim A = r$.
   Suppose if possible $\M_0 \neq 0$. Let $P$ be a minimal prime of $\M_0$. Let $B = A_P$ and $R_B = B[X_1, \ldots, X_m]$. Then $ \N = \M \otimes_R R_B = \M \otimes_A B$ is a graded  $F_{R_B}$- finite module. Note $\N$ is $(R_B)_+$-torsion and $\N_0 \neq 0$. If $\dim B < r$ then we have our induction hypotheses yields a contradiction. So assume $ \dim B = r$.
   Let $C = \wh{A_P}$ be the completion of $A_P$ with respect to its maximal ideal. Note $\N_0$ has a structure as a $C$-module, see \ref{min-loc}.  Let $R_C = C[X_1, \ldots, X_m]$. Then
    $ \LL = \N \otimes_{R_B} R_C = \N \otimes_B C$ is a graded  $F_{R_C}$-finite module. Note $\LL$ is $(R_B)_+$-torsion and $\LL_0 \neq 0$. Let $C = k[[Z_1, \ldots, Z_r]]$.
    We have an exact sequence
   $$ 0 \rt H_1(Z_r, \LL) \rt \LL \xrightarrow{Z_r}  \LL \rt H_0(Z_r, \LL) \rt 0. $$
Set $\ov{C} = C/Z_rC = k[[Z_1, \ldots, Z_{r-1}]]$ and $R_{\ov{C}} = \ov{C}[X_1, \ldots, X_m] = R_C/Z_rR_C$.
 By \ref{r1} we get that $\E_i = H_i(Z_r, \LL)$ are graded $F_{R_{\ov{C}}}$-finite for $i = 0,1$. Furthermore it is easily verified that for $i = 0, 1$ the modules $\E_i$ are $(R_{\ov{C}})_+$-torsion. So by induction hypotheses $(\E_i)_n = 0$ for $n \geq 0$. In particular  we get the map on $\LL_0$ given by multiplication by $Z_r$ is an isomorphism. Therefore $\LL_0$ is a $C_{Z_r}$-module. Let $T = C_{Z_r}[X_1,\ldots, X_m]$. Then $U = \LL\otimes_{R_C}T = \LL\otimes_C C_{Z_r}$ is $F_T$-finite and $T_+$-torsion. Furthermore $U_0 = \LL_0 \neq 0$.
 But $\dim C_{Z_r} = r -1$. This contradicts our induction hypotheses. Thus $\M_0 = 0$ and so by \ref{tame} it follows that $\M_n = 0$ for $n \geq 0$.

 (b)  $\Leftrightarrow$   (c)  This follows from Theorem \ref{tame}.

 (c) $\implies$ (d). Suppose if possible there exists an associate prime $\Q$ of $\M$ with $\Q \nsupseteq R_+$. We have an exact sequence
 $$ 0 \rt \Gamma_{R_+}(\M) \rt \M \rt \N \rt 0.$$
 By \ref{mod-G} it follows that $\Q$ is an associate prime of $\N$. In particular $\N \neq 0$. Also by \ref{mod-G} we have
 $$ \Ass(\N) = \{ P \mid P \in \Ass(\M) \ \text{and} \ P \nsupseteq R_+ \}.$$
Let $V = kX_1 \oplus kX_2 \oplus \cdots \oplus kX_m$. For every associate prime $P$ of $\N$ we have $P\cap V $ is a proper $k$-subspace of $V$. As
$k$ is infinite, there exists
$$\xi \in V \setminus \bigcup_{P \in \Ass \N} P\cap V. $$
Then $\xi$ is $\N$-regular and note $\xi \in R_1$. So we have an exact sequence
$$ 0 \rt \N_{i-1} \rt \N_i  \  \quad \text{for all $i \in \Z$}.$$
In particular $\N_n \neq 0$ for all $n \gg 0$. So $\M_n \neq 0$ for all $n \gg 0$. This contradicts our assumption. Thus (d) holds.

(d) $\implies$ (a).
We have an exact sequence
 $$ 0 \rt \Gamma_{R_+}(\M) \rt \M \rt \N \rt 0.$$
 By \ref{mod-G} we have
 $$ \Ass(\N) = \{ P \mid P \in \Ass(\M) \ \text{and} \ P \nsupseteq R_+ \}.$$
 By our assumption $\Ass(\N) = \emptyset$. So $\N = 0$. It follows that $\M = \Gamma_{R_+}(\M)$.
 \end{proof}

 \s \textbf{Frobenius action on local cohomology:} (with hypotheses as in \ref{eclair}). Let $\psi \colon S \rt S$ be the Frobenius. We recall that \emph{an action of the Frobenius } on a $S$-module
 $M$ is an additive  map $f \colon M \rt M$ such that $f(s m) =s^pf(m)$ for $m \in M$ and $s \in S$. By \cite[21.14]{24} we have a Frobenius action on $H^i_\n(S)$ for all $i \geq 0$.
 It follows from the construction of Frobenius action that the Frobenius action $f \colon H^i_\n(S) \rt H^i_\n(S)$ has the property that
 $f(H^i_\n(S)_n) \subseteq  H^i_\n(S)_{np}$ for all $n \in \Z$.

 Let $M$ be a graded $*$-Artininan $S$-module with an action of Frobenius $f \colon M \rt M$ such that $f(M_n) \subseteq M_{np}$ for all $n \in \Z$. Let $M^{f^t}$ be the $S$-submodule of $M$ generated by $F^t(M)$. Then note $M^{f^t}$ is a graded $S$-submodule of $M$. We have a descending chain
 $$ M \supseteq M^f \supseteq M^{f^2} \supseteq \cdots \supseteq M^{f^t} \supseteq \cdots.$$
 As $M$ is $*$-Artinian this descending chain stabilizes. Let $M^*$ be the stable value.
 Set
 $$ \fg^*(S) =  \max \{ r \mid H^i_{\n}(S)^*_n = 0 \ \text{for all but finitely many $n$ and all $i < r$} \}.$$
 Clearly $\fg_\n(S) \leq \fg^*(S)$.

 Let $D(-)$ be the graded Matlis dual functor on $ ^*Mod(R)$.
 \begin{remark}\label{lyu-c}
   Using techniques from  Lyubeznik's paper \cite{Lyu-comp} it follows there exists a  (graded) root $E_i$ of $H^{\dim R -i}_I(R)$ such that $D(E_i) = H^i_\n(S)^* $.
 \end{remark}

 The following result implies Theorem \ref{app-fg-body}.
 \begin{theorem}\label{fg-l}
 (with hypotheses as in \ref{eclair}). Let $i < \fg^*(S)$. Then $H^i_I(R)_n = 0$ for all $n \geq 0$.
 \end{theorem}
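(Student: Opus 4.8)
The module in the displayed statement should read $H^{\dim R-i}_I(R)$ rather than $H^i_I(R)$: this is the module attached to $H^i_\n(S)^*$ in Remark~\ref{lyu-c}, it is the one for which Theorem~\ref{fg-l} implies Theorem~\ref{app-fg-body}, and the literal reading fails already for, e.g., $R=k[X_1,X_2,X_3]$ and $I=(f)$ with $f$ a quadric (then $S=R/I$ is \CM \ of dimension $2$, so $H^0_\n(S)=H^1_\n(S)=0$ and $\fg^*(S)\ge 2$, yet $H^1_{(f)}(R)=R_f/R$ is nonzero in degree $0$). So I will prove: \emph{if $i<\fg^*(S)$ then $H^{\dim R-i}_I(R)_n=0$ for all $n\ge 0$.} The plan is to show that a graded root of $H^{\dim R-i}_I(R)$ is supported in finitely many degrees, hence is $R_+$-torsion, and then to apply Lemma~\ref{criterion} to convert $R_+$-torsionness into vanishing of all non-negative graded components. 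The genuine input is the duality of Remark~\ref{lyu-c}; the rest is a degree count.

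\textbf{The root is bounded.} By Remark~\ref{lyu-c} I would fix a graded root $E$ of $H^{\dim R-i}_I(R)$ — a finitely generated graded $R$-module with a degree-preserving inclusion $E\hookrightarrow F(E)$ such that $H^{\dim R-i}_I(R)=\varinjlim_t F^t(E)$ — together with a graded isomorphism $D(E)\cong H^i_\n(S)^*$, where $D$ denotes the graded Matlis dual over $R$. Recall $D$ is exact and faithful on graded $R$-modules and reverses the grading up to a fixed shift, so a graded $R$-module is supported in finitely many degrees if and only if its $D$-dual is. Now $S=R/I$ is a finitely generated graded algebra over the complete local ring $A$, so $H^i_\n(S)$ is $*$-Artinian; hence $H^i_\n(S)_n=0$ for $n\gg 0$, and so is its submodule $H^i_\n(S)^*$. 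Since $i<\fg^*(S)$, the definition of $\fg^*$ also gives $H^i_\n(S)^*_n=0$ for all but finitely many $n$. Thus $H^i_\n(S)^*$ is supported in finitely many degrees, and therefore so is $E$; say $E_n=0$ for $|n|>N$.

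\textbf{Conclusion.} As $R=A[X_1,\dots,X_m]$ is non-negatively graded and $R_+=(X_1,\dots,X_m)$ is generated in degree $1$, any graded $R$-module supported in degrees $[-N,N]$ is annihilated by $R_+^{\,2N+1}$; hence $E=\Gamma_{R_+}(E)$. The Frobenius functor carries $R_+$-torsion modules to $R_+$-torsion modules — if $X_\ell^k m=0$ then $X_\ell^{kp}(1\otimes m)=1\otimes X_\ell^k m=0$ in $F(M)=R'\otimes_R M$ by the relation $r^p r'\otimes m=r'\otimes rm$, and such elements $1\otimes m$ generate $F(M)$ — and it commutes with direct limits; therefore $H^{\dim R-i}_I(R)=\varinjlim_t F^t(E)$ is $R_+$-torsion, i.e.\ $\Gamma_{R_+}\!\big(H^{\dim R-i}_I(R)\big)=H^{\dim R-i}_I(R)$. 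Finally $A=k[[Y_1,\dots,Y_d]]$ is a regular domain of dimension $d$ containing the infinite field $k$, and $H^{\dim R-i}_I(R)=\FF(R)$ for the graded Lyubeznik functor $\FF=H^{\dim R-i}_I(-)$, hence is graded $F_R$-finite; if it is zero there is nothing to prove, and otherwise Lemma~\ref{criterion}, equivalence (a)$\Leftrightarrow$(b), gives $H^{\dim R-i}_I(R)_n=0$ for all $n\ge 0$.

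\textbf{Main obstacle.} The only step with real content is Remark~\ref{lyu-c}: exhibiting a root of $H^{\dim R-i}_I(R)$ whose graded Matlis dual is precisely the Frobenius-stable submodule $H^i_\n(S)^*$. This requires setting up Lyubeznik's duality between $F$-modules over the regular ring $R$ and cofinite Frobenius modules over $S=R/I$ in the graded category, and then passing to the stable part $(-)^*$ of the descending Frobenius chain — the content of \cite{Lyu-comp} carried over to the graded situation of \ref{eclair}. Once this is in hand the remainder is formal: $*$-Artinianness together with $i<\fg^*(S)$ confines the root to finitely many degrees, the degree count above makes it $R_+$-torsion, and Lemma~\ref{criterion} — deliberately proved earlier over an arbitrary regular domain containing an infinite field, so that no further reduction to $A=k$ is needed here — finishes the proof.
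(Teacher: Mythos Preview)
Your proof is correct and follows essentially the same approach as the paper: both use Remark~\ref{lyu-c} to obtain a graded root $E$ with $D(E)\cong H^i_\n(S)^*$, observe that $i<\fg^*(S)$ forces $E$ to live in finitely many degrees and hence be $R_+$-torsion, and finish with Lemma~\ref{criterion}. The only cosmetic difference is the bridge step: the paper invokes the standard fact $\Ass E=\Ass H^{\dim R-i}_I(R)$ for a root and then uses implication (d)$\Rightarrow$(b) of Lemma~\ref{criterion}, whereas you push $R_+$-torsion through the Frobenius direct system to get $\Gamma_{R_+}(\M)=\M$ and use (a)$\Rightarrow$(b); both are valid one-line arguments. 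Your identification of the typo in the statement ($H^i_I(R)$ should read $H^{\dim R-i}_I(R)$) is also correct.
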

 \begin{proof}
   Fix $i <  \fg^*(S)$. As in \ref{lyu-c}  let $E_i$ be a graded root  of $H^{\dim R -i}_I(R)$ such that $D(E_i) = H^i_\n(S)^* $. So $E_i = D(H^i_\n(S)^*)$ only has finitely many non-zero components. In particular $E_i$ is $R_+$-torsion. As $\Ass E_i = \Ass  H^{\dim R -i}_I(R)$; it follows that any associated prime $P$ of $H^{\dim R -i}_I(R)$ contains $R_+$. So by Lemma \ref{criterion} the result follows.
 \end{proof}

In view of Theorem \ref{fg-l} we study the invariant
\[
c(R, S) = \max \{ r \mid H^{\dim R - i}_{I}(R)_n = 0 \ \text{for all $n \geq  0$ and all $i < r$} \}.
\]
\s It is trivial to see that if $S_n = 0$ for $n \gg 0$ then $\fg^*(S) = + \infty$. We show
\begin{lemma}(with hypotheses as in \ref{fg-l}).
Suppose $S_n \neq 0$ for all $n \geq 0$. Then $\fg^*(S) \leq \dim S$.
\end{lemma}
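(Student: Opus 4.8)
The plan is to assume $\fg^*(S)\geq \dim S+1$ and derive a contradiction, using the graded root $E_i$ of $H^{\dim R-i}_I(R)$ supplied by Remark~\ref{lyu-c} together with the dichotomy of Lemma~\ref{criterion}. The first move is to reformulate the hypothesis geometrically. Since $S$ is standard graded, $S_n=(S_1)^n$, so $S_n\neq 0$ for all $n\geq 0$ is equivalent to $S_+$ being non-nilpotent, hence to $\Proj(S)\neq\emptyset$. Lifting a minimal element of $\Proj(S)$ to $R$ and passing to a minimal prime of $I$ contained in it, I obtain a minimal prime $\mathfrak{p}$ of $I$ with $R_+\nsubseteq\mathfrak{p}$. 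Put $h=\height\mathfrak{p}$; because $R=A[X_1,\dots,X_m]$ is a catenary equidimensional domain, $\dim R/\mathfrak{p}=\dim R-h$, and I set $i=\dim R/\mathfrak{p}=\dim R-h$. As $\mathfrak{p}\supseteq I$ this gives $0\leq i\leq \dim R/I=\dim S$, so under the assumption $\fg^*(S)\geq\dim S+1$ we have $i<\fg^*(S)$.

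Next I would place $\mathfrak{p}$ among the associated primes of $\M:=H^{\dim R-i}_I(R)=H^h_I(R)$, a graded $F_R$-finite module. Localizing at $\mathfrak{p}$: since $\mathfrak{p}$ is minimal over $I$, the ideal $IR_{\mathfrak{p}}$ is $\mathfrak{p}R_{\mathfrak{p}}$-primary, so
\[
\M_{\mathfrak{p}}\;\cong\;H^{h}_{\mathfrak{p}R_{\mathfrak{p}}}(R_{\mathfrak{p}})\;\cong\;E_{R_{\mathfrak{p}}}(\kappa(\mathfrak{p}))\;\neq\;0,
\]
because $R_{\mathfrak{p}}$ is regular local of dimension $h$. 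Hence $\mathfrak{p}\in\Ass_R\M$, and $R_+\nsubseteq\mathfrak{p}$. Now by Remark~\ref{lyu-c} there is a graded root $E_i$ of $\M$ with $D(E_i)=H^i_{\n}(S)^{*}$, and, exactly as in the proof of Theorem~\ref{fg-l}, $\Ass_R E_i=\Ass_R\M$, so $\mathfrak{p}\in\Ass_R E_i$. On the other hand $i<\fg^*(S)$ forces, by the definition of $\fg^*$, that $H^i_{\n}(S)^{*}_n=0$ for all but finitely many $n$; applying $D$, this says $E_i$ has only finitely many nonzero graded components. But a graded $R$-module with bounded nonzero components is annihilated by a power of $R_+$, so every associated prime of $E_i$ contains $R_+$ — contradicting $\mathfrak{p}\in\Ass_R E_i$ with $R_+\nsubseteq\mathfrak{p}$. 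This contradiction yields $\fg^*(S)\leq\dim S$. (Equivalently, once $\mathfrak{p}\in\Ass_R\M$ with $R_+\nsubseteq\mathfrak{p}$ is established, Lemma~\ref{criterion} gives $\M_n\neq 0$ for some $n\geq 0$, hence $c(R,S)\leq i\leq\dim S$; this is the same content phrased through $c(R,S)$.)

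The steps about heights, catenarity, and the identification $\M_{\mathfrak{p}}\cong E_{R_{\mathfrak{p}}}(\kappa(\mathfrak{p}))$ are routine and I expect no difficulty there. The point that genuinely matters — and is easy to get wrong — is the choice of index: one must \emph{not} take $i=\dim S$, since $H^{\dim R-\dim S}_I(R)=H^{\height I}_I(R)$ may be entirely $R_+$-torsion (this already occurs for $I=(X)\cap(y_1,y_2)\subseteq k[[y_1,y_2]][X]$). The correct choice $i=\dim R/\mathfrak{p}$ for a minimal prime $\mathfrak{p}$ of $I$ avoiding $R_+$ works precisely because minimality of $\mathfrak{p}$ makes $\height\mathfrak{p}$ equal to $\dim R-i$ \emph{exactly}, which is what forces $H^{\dim R-i}_I(R)_{\mathfrak{p}}$ to be the nonzero injective hull, while $\mathfrak{p}\supseteq I$ keeps $i$ in the range $[0,\dim S]$ needed to invoke the definition of $\fg^*$.
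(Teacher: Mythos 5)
Your proof is correct, and it differs from the paper's in one essential respect: the choice of the cohomological index. The paper argues only at $i=\dim S$. Assuming $\fg^*(S)>\dim S$, it applies Theorem \ref{fg-l} to get $H^{g}_I(R)_n=0$ for all $n\geq 0$ with $g=\height I=\dim R-\dim S$, invokes ``$\Ass H^g_I(R)=$ minimal primes of $I$'' to conclude via Lemma \ref{criterion} that every minimal (hence every associated) prime of $S$ contains $R_+$, and then gets $S=\Gamma_{R_+}(S)$ from Lemma \ref{mod-G}, so $S_n=0$ for $n\gg 0$, a contradiction. You instead fix a minimal prime $\mathfrak{p}$ of $I$ with $R_+\nsubseteq\mathfrak{p}$ (which exists by hypothesis), take $i=\dim R/\mathfrak{p}$, show $\mathfrak{p}\in\Ass H^{\dim R-i}_I(R)$ by localization, and contradict $i<\fg^*(S)$ via the root $E_i$ of Remark \ref{lyu-c} (equivalently, via Theorem \ref{fg-l} together with the implication (b) $\Rightarrow$ (d) of Lemma \ref{criterion}). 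Your version is in fact the more careful one: $\Ass H^{\height I}_I(R)$ consists only of the minimal primes of $I$ of height exactly $\height I$, so the paper's argument as written does not detect lower-dimensional components of $S$, and your example $I=(X)\cap(y_1,y_2)$ in $k[[y_1,y_2]][X]$ is precisely a case where $H^{\height I}_I(R)$ is entirely $R_+$-torsion even though $\Proj(S)\neq\emptyset$; choosing $i=\dim R/\mathfrak{p}$ component by component repairs this. One small point of justification you gloss over: the equality $\height\mathfrak{p}+\dim R/\mathfrak{p}=\dim R$ does not follow merely from $R=A[X_1,\dots,X_m]$ being a catenary domain (it fails for the prime $(XY-1)$ in $k[[Y]][X]$); it holds here because $\mathfrak{p}$ is homogeneous, hence contained in the graded maximal ideal $\m_A R+R_+$, whose height equals $\dim R$.
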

\begin{proof}
Suppose if possible $\fg^*(S) > \dim S$. Then by Theorem \ref{fg-l} we get \\ $H^{\dim R - \dim S}_{I}(R)_n = 0 \ \text{for all $n \geq  0$}$. Let $g = \height I = \dim R - \dim S$.
Then it is well-known that $\Ass H^g_I(R) = $ minimal primes of $I$. By Lemma \ref{criterion} we get that $P \supseteq R_+$ for all minimal primes of $I$. It follows that $Q \supseteq R_+$ for all associate primes of $I$.
We have an exact sequence
$$ 0 \rt \Gamma_{R_+}(S) \rt S \rt \ov{S} \rt0.$$
By \ref{mod-G} it follows that $\Ass \ov{S} = \emptyset$. So $\ov{S} = 0$. Thus $S = \Gamma_{R_+}(S)$. So $S_n = 0$ for all $n \gg 0$, a contradiction.
\end{proof}

We show
\begin{proposition}\label{fg-prop}
(with hypotheses as in \ref{fg-l}). $c(R,S) = \fg^*(S)$.
\end{proposition}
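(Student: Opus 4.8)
The plan is to establish the two inequalities $\fg^*(S) \le c(R,S)$ and $c(R,S) \le \fg^*(S)$ separately. The first is essentially a reformulation of Theorem \ref{fg-l}, while the second is where the graded root furnished by Remark \ref{lyu-c} does the work.

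\emph{The inequality $\fg^*(S) \le c(R,S)$.} If $i < \fg^*(S)$, then Theorem \ref{fg-l} gives $H^{\dim R - i}_I(R)_n = 0$ for all $n \ge 0$. Since this holds for every such $i$, the value $r = \fg^*(S)$ is admissible in the definition of $c(R,S)$, so $c(R,S) \ge \fg^*(S)$ (with the usual convention when $\fg^*(S) = +\infty$).

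\emph{The inequality $c(R,S) \le \fg^*(S)$.} Fix $i < c(R,S)$; I must show that $H^i_\n(S)^*_n = 0$ for all but finitely many $n$. By Remark \ref{lyu-c} choose a graded root $E_i$ of $H^{\dim R - i}_I(R)$ with $D(E_i) = H^i_\n(S)^*$. Since $R$ is regular the Frobenius functor $F_R$ is exact, so injectivity of the root morphism $\beta \colon E_i \to F_R(E_i)$ forces $F_R^j(\beta)$ to be injective for every $j$; hence all transition maps of the direct system defining $H^{\dim R - i}_I(R)$ are injective and $E_i$ embeds as a graded $R$-submodule of $H^{\dim R - i}_I(R)$. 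Because $i < c(R,S)$ we have $H^{\dim R - i}_I(R)_n = 0$ for all $n \ge 0$, and therefore $(E_i)_n = 0$ for all $n \ge 0$. On the other hand $E_i$ is finitely generated over the nonnegatively graded ring $R = A[X_1,\ldots,X_m]$ (with $\deg A = 0$, $\deg X_j = 1$), so it is bounded below: $(E_i)_n = 0$ for $n \ll 0$. Thus $E_i$ has only finitely many nonzero homogeneous components, and since the graded Matlis dual interchanges the degree $m$ and degree $-m$ pieces (up to $A$-Matlis duality), the module $H^i_\n(S)^* = D(E_i)$ also has only finitely many nonzero components. As $i < c(R,S)$ was arbitrary, $r = c(R,S)$ is admissible in the definition of $\fg^*(S)$, so $\fg^*(S) \ge c(R,S)$.

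Combining the two inequalities gives $c(R,S) = \fg^*(S)$. I do not anticipate any real obstacle here: the only points deserving a word of care are that the root $E_i$ may be taken graded — which is already asserted in Remark \ref{lyu-c} — and that a root always embeds into its ambient $F_R$-module, which is immediate from the exactness of $F_R$ over the regular ring $R$. The genuine content is the passage between the two different types of condition, ``vanishing in all degrees $n \ge 0$'' and ``vanishing in all but finitely many degrees'', which is bridged exactly by the boundedness below of the finitely generated graded module $E_i$.
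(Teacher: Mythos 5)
Your proposal is correct and follows essentially the same route as the paper: Theorem \ref{fg-l} gives $c(R,S) \geq \fg^*(S)$, and for the reverse inequality the paper likewise uses the graded root $E_i$ of Remark \ref{lyu-c}, its inclusion into $H^{\dim R - i}_I(R)$, and its finite generation to conclude that $E_i$ (hence $H^i_\n(S)^*$) is concentrated in finitely many degrees. The only cosmetic difference is that the paper phrases the second inequality as a contradiction at the single index $i = \fg^*(S)$, whereas you argue directly for every $i < c(R,S)$.
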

\begin{proof}
We have $c(R,S) \geq c = \fg^*(S)$. If $c$ is infinite then we are done. So assume $c$ is a finite number. Suppose $c(R,S) > c$. Then $H^{\dim R - c}_{I}(R)_n = 0 \ \text{for all $n \geq  0$}.$ Let $E_c$ be the graded root of $H^{\dim R -c}_I(R)$ such that $D(E_c) = H^c_\n(S)^* $. Note as we have graded inclusion $E_c \hookrightarrow H^{\dim R -c}_I(R)$ and as $E_c$ is a finitely generated $R$-module we get that $E_c$ is concentrated in finitely many degrees.
This  implies $H^c_\n(S)^* $ is concentrated in finitely many degrees. So $\fg^*(S) \geq c + 1$, a contradiction.
\end{proof}

As an easy consequence of Theorem \ref{fg-l} we get
\begin{theorem}
  \label{fg-max} Let $(A,\m)$ be a regular local ring containing a field of characteristic $p>0$. Let $R = A[X_1, \ldots, X_m]$ be standard graded and let $I$ be a homogeneous ideal in $R$. Set $S = R/I$.  Assume also $S$ is equidimensional and $\Proj(S) \neq \emptyset$. If $\Proj(S)$ is \CM \ then $H^j_I(R)_n = 0$ for all $n \geq  0$ and all $j > \height I$.
\end{theorem}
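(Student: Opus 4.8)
The plan is to reduce to the situation of \ref{eclair} and then obtain the vanishing directly from Theorem \ref{app-fg-body} (equivalently Theorem \ref{fg-l}) together with Remark \ref{proj-remark}. First I would use the faithfully flat base changes listed in \ref{std-op} to replace $A$ by a power series ring over an infinite field: pass to the completion $\wh{A}$, which by the Cohen structure theorem is $K[[Y_1,\ldots,Y_d]]$ with $K$ the residue field of $A$, and, should $K$ be finite, pass further to $\wh{A}[X]_{\m\wh{A}[X]}$ and complete once more, ending at $k[[Y_1,\ldots,Y_d]]$ with $k$ infinite of characteristic $p > 0$. Each step is a faithfully flat local homomorphism under which the maximal ideal of the source extends to the maximal ideal of the target, so graded local cohomology commutes with the induced map $R = A[X_1,\ldots,X_m] \rt R' = A'[X_1,\ldots,X_m]$, the vanishing of $H^j_I(R)_n$ is equivalent to that of $H^j_{IR'}(R')_n$, and one has $\dim R' = \dim R$ and $\dim S' = \dim S$ for $S' = R'/IR'$; moreover the hypotheses ``$S$ equidimensional'' and ``$\Proj(S)$ nonempty and \CM'' pass to $S'$ (or, equivalently, $\fg_\n(S') = \fg_\n(S)$, since $H^i_{\n}(S)$ commutes with the base change). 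Thus we may assume we are in the setup \ref{eclair}.

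With that reduction made, I would record the dimension formula $\height_R I = \dim R - \dim S$: since $I$ is homogeneous each of its minimal primes $\mathfrak p$ is homogeneous, hence contained in the irrelevant maximal ideal $\n$, so $\dim R/\mathfrak p = \dim R - \height \mathfrak p$ (because $R_\n$ is catenary), and equidimensionality of $S$ forces all these heights to equal $\height_R I$. Hence every $j$ with $j > \height_R I$ has the form $j = \dim R - i$ with $0 \le i < \dim S$ (while $H^j_I(R) = 0$ outright when $j > \dim R$). Now Remark \ref{proj-remark} applies: $S$ equidimensional with $\Proj(S)$ nonempty and \CM\ gives $\fg_\n(S) = \dim S$, so for each such $i$ we have $i < \dim S = \fg_\n(S)$, and Theorem \ref{app-fg-body} yields $H^{\dim R - i}_I(R)_n = 0$ for all $n \ge 0$ — that is, $H^j_I(R)_n = 0$ for all $n \ge 0$ and all $j > \height_R I$, which is the assertion.

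The step I expect to be the main obstacle is the reduction in the first paragraph: one must confirm that completing $A$ (and, in the finite-residue-field case, the auxiliary localization) leaves untouched the data entering the argument — that $\dim S$ is unchanged and that either $\fg_\n(S)$ is unchanged or the two geometric hypotheses (equidimensionality of $S$, Cohen-Macaulayness of $\Proj(S)$) descend. This is where one uses that the base changes are faithfully flat with the maximal ideal of the source extending to that of the target, that $\dim S = \height_S \n$ for a graded ring with local degree-zero part, and the Ratliff-type stability of equidimensionality under completion — available here because the local ring $R_\n$ is regular, hence formally equidimensional. Granting this, everything else is formal bookkeeping with the invariants $\height_R I$, $\dim S$ and $\fg_\n(S)$.
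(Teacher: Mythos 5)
Your argument is correct and follows essentially the same route as the paper: reduce by the faithfully flat base changes of \ref{std-op} (completion, plus the $A[X]_{\m A[X]}$ trick when the residue field is finite) to the setup of \ref{eclair}, invoke Remark \ref{proj-remark} to get $\fg_\n(S) = \dim S$, and then apply Theorem \ref{app-fg-body}/\ref{fg-l}. The only difference is presentational — you spell out the preservation of the hypotheses (equivalently of $\fg_\n$) under base change and the identity $\height_R I = \dim R - \dim S$, both of which the paper leaves implicit.
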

\begin{proof}
  By \ref{proj-remark} we get $\fg_\n(S) = \dim S$. If residue field of $k$ is not finite then consider the flate extension
  $A' = A[X]_{\m A[X]}$. We then complete $A'$ and denote it by $B$. Set $R_B = R\otimes_A B = B[X_1, \ldots, X_m]$  and $I_B = IR_B$. It can be seen that $S_B = SR_B/I_B$ is a flat extension of $S$. It is easily proved that $\fg_{\n S_N} \geq \fg_\n(S) = \dim S = \dim S_B$.
  The result follows from \ref{fg-l}.
\end{proof}
\section{Application-III}
\s \label{koszul-app-body} \textbf{Setup:}
Let $(A,\m)$ be a Noetherian local ring with infinite residue field. Assume there $\pi \colon R \rt A$ is a surjection where $R$ is a  regular local ring of dimension $d$.  Let $\m$  be the maximal ideal of $R$.
We show
\begin{theorem}\label{koszul-app-th-body}
(with hypotheses as in \ref{koszul-app-body}) The Koszul cohomology modules \\ $H^j(\m, H^{d-i}_{\ker \pi}(R))$ depends only on $A,i$ and $j$ and neither  on $R$ nor on $\pi$.
\end{theorem}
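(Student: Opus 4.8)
The strategy is a standard two-step comparison: given two surjections $\pi\colon R \to A$ and $\pi'\colon R' \to A$ from regular local rings of dimensions $d$ and $d'$, reduce to a common refinement by passing to a regular local ring $T$ that surjects onto both $R$ and $R'$ (over $A$), and then track what happens to the local cohomology and its Koszul cohomology under the two elementary moves that build $T$ from $R$ (and from $R'$): adjoining a variable, and completing. Concretely, after completing we may assume $R,R'$ and $A$ are complete; then both $R$ and $R'$ surject onto the complete local ring $A$, and by the Cohen structure theorem each factors as a power series ring mapping onto $A$. A third regular local ring $T = A\text{-algebra}$ of the form $R[[Z_1,\dots,Z_a]]$ maps onto $A$ and also onto $R$; symmetrically it (or a further extension) maps onto $R'$. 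So it suffices to prove the invariance statement when $R' = R[[Z]]$ is obtained from $R$ by adjoining a single power series variable $Z$, with $\ker\pi' = \ker\pi \cdot R' + (Z)$, and then iterate. Here the infinite residue field hypothesis is used exactly as in the proof of Theorem~\ref{main-koszul-body}: to make a generic linear change of variables so that a chosen element is regular on the relevant $F$-finite module.

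For the key reduction step write $J = \ker\pi$, $J' = JR' + (Z) = \ker\pi'$, $d' = d+1$, and $\m' = \m R' + (Z)$, the maximal ideal of $R' = R[[Z]]$. Since $R \to R'$ is flat and $R' / JR' \cong R/J \cong A$ while $R'/(Z) \cong R$, one gets $H^{d'-i}_{J'}(R')$ from $H^{d-i}_J(R)$ by a base change along a flat map followed by a local cohomology operation in the new variable $Z$. More precisely: $H^{k}_{J'}(R') = H^k_{(JR',Z)}(R')$, and using the spectral sequence (or the long exact sequence) relating $H^\bullet_{(JR',Z)}$ to $H^\bullet_Z H^\bullet_{JR'}$, together with $H^1_Z(R' \otimes_R M) \cong (R'\otimes_R M)[Z^{-1}]/(R'\otimes_R M)$ and $H^0_Z = 0$ on $Z$-torsion-free modules, one identifies $H^{d'-i}_{J'}(R')$ with (a shift of) the $Z$-local cohomology of the $F_{R'}$-finite module $R'\otimes_R H^{d-i}_J(R)$. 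Then one applies Theorem~\ref{r1}: $H^j(\m',-)$ of this module is computed from $H^j(\m, H^{d-i}_J(R))$ by first taking $\M/Z\M$ and $(0:_\M Z)$ — both $F_{R}$-finite by Theorem~\ref{r1} — and then applying the Koszul functor $H^\bullet(\m;-)$ over $R$, using the standard isomorphism $H^\bullet(\m',-) \cong H^\bullet(\m; H^\bullet(Z;-))$. Unwinding, $H^j(\m', H^{d'-i}_{J'}(R'))$ is expressed purely in terms of the $R$-modules $H^j(\m, H^{d-i}_J(R))$ and $H^{j-1}(\m, H^{d-i}_J(R))$ via natural operations that do not see the embedding.

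The technical heart is making the two identifications in the previous paragraph \emph{canonical} — i.e., genuinely independent of $\pi$, not just abstractly isomorphic. For this I would work with the generating (root) morphisms: represent $H^{d-i}_J(R)$ by an explicit root built functorially from a resolution of $R/J$ (as in Lyubeznik's construction via the Frobenius on $H^\bullet_\m(R/J)^*$, cf.\ \ref{lyu-c}), and check that the passage to $R[[Z]]$ and the Koszul cohomology commute with Matlis duality and with the Frobenius action, so that the final answer is the Koszul cohomology of an $F$-finite module attached functorially to $A$ alone. I expect the main obstacle to be precisely this bookkeeping: ensuring the isomorphisms respect the $F$-module structure and the grading/filtration introduced by $Z$, and that the generic linear change of variables needed to apply Theorem~\ref{r1} can be chosen compatibly at each stage (this is where infiniteness of the residue field is essential). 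Once the single-variable step is canonical, finiteness of the chain $R \leftarrow T \to R'$ and a diagram chase over $A$ finish the proof.
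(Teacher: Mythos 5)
Your overall strategy is the paper's: complete everything (Lemma \ref{koz-lemma-1}), compare two presentations $R \to A$ and $R' \to A$ through a common complete regular local ring surjecting onto both (the paper takes $T = R\wh{\otimes}_k R'$; your $R[[Z_1,\ldots,Z_a]]$ with the $Z_j$ sent to lifts of generators is the same construction), and reduce by induction to adjoining a single variable $Z$ with the new kernel equal to $(\ker\pi)R'+(Z)$ (Lemma \ref{quotient}). Your computation of the one-variable step — flat base change plus the identification $H^{d+1-i}_{J'}(R')\cong H^1_Z\bigl(H^{d-i}_J(R)\otimes_R R'\bigr)$ — is a legitimate variant of the paper's, which instead applies $H^\bullet_J(-)$ to $0\to S\xrightarrow{\ Z\ } S\to R\to 0$ and splits the long exact sequence using the surjectivity of $Z$ on the $Z$-torsion $F$-finite modules $H^{m+1}_J(S)$, supplied by Lemma \ref{m-torsion}.

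The one substantive point you leave open is exactly the technical heart. Ending with ``$H^j(\m_{R'}, H^{d+1-i}_{J'}(R'))$ is expressed in terms of $H^j(\m,H^{d-i}_J(R))$ \emph{and} $H^{j-1}(\m,H^{d-i}_J(R))$'' does not close the argument: the two-sided comparison through $T$ needs the degree-preserving isomorphism $H^j(\m_T,H^{\dim T-i}_W(T))\cong H^j(\m_R,H^{\dim R-i}_I(R))$, so you must show the $H^{j-1}$ contribution dies. It does, but for a reason you have to prove: writing $M=H^{d-i}_J(R)$ and $N=H^1_Z(M\otimes_R R')$, both $N$ and its Koszul cohomologies $H^\bullet(x_1,\ldots,x_d;N)\cong H^1_Z\bigl(H^\bullet(x_1,\ldots,x_d;M)\otimes_R R'\bigr)$ are $Z$-divisible, so the $(-)/Z(-)$ term vanishes and only $(0:_{-}Z)\cong H^j(x_1,\ldots,x_d;M)$ survives. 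This surjectivity of $Z$ on the relevant torsion modules is precisely what the paper's Lemma \ref{m-torsion} provides (for $H^{m+1}_J(S)$ and for the Koszul homologies $\N_j$), and it is the step your plan must make explicit. Two smaller remarks: the theorem asserts only an isomorphism of $k$-vector spaces, so your program of making everything canonical via roots, Matlis duality and Frobenius compatibility is unnecessary; and no generic change of variables is needed in this step — $Z$ is automatically a nonzerodivisor on $M\otimes_R R[[Z]]$, while the local cohomology modules with support containing $Z$ are $Z$-torsion rather than $Z$-regular, so the infinite-residue-field hypothesis is not used in the way you describe.
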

To prove the result we need a few preparatory results.

\begin{lemma}\label{koz-lemma-1}
(with hypotheses as in \ref{koszul-app-body}). Let $\wh{R}$ be the completion of $R$ \wrt \ $\m$. Set $k = A/\m$ and $I = \ker \pi$. Then we have an isomorphism of $k$-vector spaces
\[
H^j(\m, H^{d-i}_{I}(R)) \cong H^j(\wh{\m}, H^{d-i}_{\wh{I}}(\wh{R})).
\]
\end{lemma}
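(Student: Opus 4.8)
The plan is to pass from $R$ to $\wh{R}$ through a chain of flat base-change isomorphisms, and then to observe that the resulting tensor by $\wh{R}$ is harmless because a Koszul cohomology module of $\m$ is a $k$-vector space. First I would fix a minimal generating set $x_1,\ldots,x_d$ of $\m$; since $R$ is regular local of dimension $d$ this is a regular system of parameters, and its image in $\wh{R}$ generates $\wh{\m}=\m\wh{R}$. Hence for any $R$-module $N$ (resp.\ $\wh{R}$-module) the modules $H^j(\m,N)$ and $H^j(\wh{\m},N)$ may be computed as the cohomology of the Koszul complex $K^\bullet(x_1,\ldots,x_d;N)$, and one has a complex-level identity $K^\bullet(x_1,\ldots,x_d;M\otimes_R\wh{R})=K^\bullet(x_1,\ldots,x_d;M)\otimes_R\wh{R}$ for every $R$-module $M$.

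Next I would use flatness of $R\rt\wh{R}$ twice. Since $\wh{I}=I\wh{R}$ (as $\wh{A}=\wh{R}/I\wh{R}$), local cohomology commutes with completion, so
\[
H^{d-i}_{\wh{I}}(\wh{R})\cong H^{d-i}_{I}(R)\otimes_R\wh{R};
\]
this is the special case of Proposition \ref{flat-L} for the Lyubeznik functor $H^{d-i}_{V(I)}(-)$, or can be seen directly from the \v{C}ech complex. Likewise, since $\wh{R}$ is flat over $R$, taking cohomology of $K^\bullet(\underline{x};-)$ commutes with $-\otimes_R\wh{R}$, so $H^j(\wh{\m},M\otimes_R\wh{R})\cong H^j(\m,M)\otimes_R\wh{R}$ for every $R$-module $M$. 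Chaining these with $M=H^{d-i}_I(R)$ gives
\[
H^j(\wh{\m},H^{d-i}_{\wh{I}}(\wh{R}))\cong H^j(\m,H^{d-i}_I(R))\otimes_R\wh{R}.
\]

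Finally I would observe that $V:=H^j(\m,H^{d-i}_I(R))$ is annihilated by $(x_1,\ldots,x_d)=\m$, hence is a module over $R/\m=k$. Therefore, using $k\otimes_R\wh{R}=\wh{R}/\m\wh{R}=\wh{R}/\wh{\m}\cong k$,
\[
V\otimes_R\wh{R}\cong V\otimes_k(k\otimes_R\wh{R})\cong V\otimes_k k\cong V
\]
as $k$-vector spaces, which is the asserted isomorphism. The argument uses no finiteness hypothesis on the (generally non-finitely generated) module $H^{d-i}_I(R)$. There is essentially no obstacle here beyond bookkeeping; the only points requiring care are that $\m$ and $\wh{\m}$ are generated by the same sequence, so that the two Koszul complexes differ by $-\otimes_R\wh{R}$, and that $\wh{R}/\wh{\m}\cong k$ — both immediate from $R$ being regular local with completion $\wh{R}$. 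This lemma will then let us replace $(R,\m,\pi)$ by its completion at the outset of the proof of Theorem \ref{koszul-app-th-body}.
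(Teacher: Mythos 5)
Your argument is correct and is essentially the paper's: both proofs combine flat base change along $R\rt\wh{R}$ (for the local cohomology module and for the Koszul computation) with the observation that $H^j(\m,H^{d-i}_I(R))$ is a $k$-vector space, so that $-\otimes_R\wh{R}$ acts as the identity on it. The only cosmetic difference is that the paper performs the base change after identifying $H^j(\m,-)$ with $\Ext^j_R(k,-)$ via the Koszul resolution of $k$ over the regular local ring $R$, whereas you base-change the finite free Koszul complex directly.
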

\begin{proof}
We note that $H^j(\m, -) \cong \Ext^j_R(k, -)$ for all $j \geq 0$ as functors from $Mod(R)$ to $Mod(k)$. Therefore for all $j \geq 0$ we have,
\[
H^j(\m, H^{d-i}_{I}(R)) \cong \Ext^j_R(k, H^{d-i}_{I}(R)).
\]
We have
\begin{align*}
 H^j(\m, H^{d-i}_{I}(R)) &\cong \Ext^j_R(k, H^{d-i}_{I}(R))  \\
   &\cong \Ext^j_R(R/\m, H^{d-i}_{I}(R))\otimes_R \wh{R}, \\
   &\cong \Ext^j_{\wh{R}}(\wh{R}/\wh{\m}, H^{d-i}_{\wh{I}}(\wh{R}))\\
  &\cong H^j(\wh{\m}, H^{d-i}_{\wh{I}}(\wh{R})).
\end{align*}
Here the second isomorphism holds since $\Ext^j_R(R/\m, H^{d-i}_{I}(R))$ is a $k$-vector space.
The result follows.
\end{proof}
The following result is an essential ingredient in our proof of Theorem \ref{koszul-app-body}.
\begin{lemma}\label{quotient}
Assume $R$ is complete and $I = \ker \pi$. Furthermore assume there is a surjective map $g \colon S \rt R$ where $S$ is complete regular ring of dimension $n$. Let $J = \ker \pi\circ g$.
Then we have an isomorphism
\[
H^j(\m_R, H^{d-i}_{I}(R)) \cong  H^j(\m_S, H^{n-i}_{J}(S)).
\]
\end{lemma}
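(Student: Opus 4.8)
The goal is to reduce the computation over $S$ to the computation over $R$ via the surjection $g\colon S\rt R$, using the fact that $S$ is regular and complete so that $\ker g$ is generated by part of a regular system of parameters. First I would write $S = R'[[W_1,\ldots,W_{n-d}]]$ in a suitable sense: since $g\colon S\rt R$ is a surjection of complete regular local rings, $\ker g$ is generated by $n-d$ elements $W_1,\ldots,W_{n-d}$ which form part of a regular system of parameters of $S$, and $S/(W_1,\ldots,W_{n-d}) \cong R$. Set $\mathbf{W} = W_1,\ldots,W_{n-d}$ and note $J = g^{-1}(I)$, so that $J \supseteq (\mathbf{W})$ and $J/(\mathbf{W})$ corresponds to $I$ under $S/(\mathbf{W})\cong R$.

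The key step is an independence-of-base change of rings for local cohomology. Because $\mathbf{W}$ is an $S$-regular sequence lying in $J$, there is a standard isomorphism
\[
H^{n-i}_{J}(S) \cong H^{d-i}_{I}(R)
\]
for all $i$; more precisely, one uses that $H^{\bullet}_{J}(S)$ can be computed by first applying $H^{\bullet}_{(\mathbf W)}(-)$ and then $H^{\bullet}_{JS/(\mathbf W)}(-)$, and $H^{\bullet}_{(\mathbf W)}(S)$ is concentrated in cohomological degree $n-d$ with value $S_{\mathbf W}/(\ldots)$ — equivalently, one can invoke the fact that for a regular element (iterated) $X$ with $R/XR$ regular, $H^{j}_{J}(S) = H^{j-1}_{\overline{J}}(S/XS)$ whenever $X\in J$. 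Iterating over $W_1,\ldots,W_{n-d}$ shifts cohomological degree down by $n-d$ and replaces $S$ by $R$, $J$ by $I$. This gives the displayed isomorphism as $R$-modules (via the surjection $S\rt R$, every $R$-module is an $S$-module).

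Next I would transport the Koszul cohomology. Since $\m_R = \pi_R(\m_S)$ — i.e. $\m_R$ is the image of $\m_S$ under $g$, so $\m_R = \m_S S/(\mathbf W)\cdot R$ — and since $H^{d-i}_I(R)$ is an $R$-module on which $S$ acts through $g$, the Koszul complex $K_\bullet(\m_S; H^{d-i}_I(R))$ computed over $S$ agrees with $K_\bullet(\m_R; H^{d-i}_I(R))$ computed over $R$: generators of $\m_S$ map onto generators of $\m_R$ together with the $W_j$, and the $W_j$ act as zero on any $R$-module, so multiplication by them contributes only trivial Koszul differentials. Concretely, if $x_1,\ldots,x_d$ are elements of $\m_S$ whose images generate $\m_R$, then $\m_S = (x_1,\ldots,x_d,W_1,\ldots,W_{n-d})$ and $H^j(\m_S; N) \cong \bigoplus H^{a}(x_1,\ldots,x_d;N)\otimes(\text{exterior algebra on the }W\text{'s})$ for any $S$-module $N$ annihilated by all $W_j$; but here I only need the cleaner statement that $H^j(\m_S; N) \cong H^j(\underline{x}; N)$ is false in general — rather $H^j$ picks up binomial multiplicities. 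To avoid this nuisance I would instead argue directly: $H^j(\m_S; -) \cong \Ext^j_S(k,-)$ and $H^j(\m_R;-)\cong \Ext^j_R(k,-)$, and for an $R$-module $N$ one has $\Ext^j_S(k,N) \cong \Ext^j_R(k,N)$ because $k$ has finite projective dimension over both and a change-of-rings spectral sequence degenerates ($k = R\otimes_S^{\mathbf L}(\text{Koszul on }\mathbf W)$ forces $\Ext^\bullet_S(k,N) \cong \Ext^\bullet_R(k \otimes_R^{\mathbf L}(\ldots),N)$ — cleaner: use that $\m_R = \m_S R$ and $N$ is an $R$-module, so $\Ext^j_S(S/\m_S, N) = \Ext^j_S(R/\m_R, N) = \Ext^j_R(R/\m_R, N)$ by base change along the surjection $S\rt R$ exactly as in Lemma \ref{koz-lemma-1}). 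Combining this with the degree-shift isomorphism of the previous paragraph yields
\[
H^j(\m_S, H^{n-i}_{J}(S)) \cong H^j(\m_R, H^{n-i}_J(S)) \cong H^j(\m_R, H^{d-i}_I(R)),
\]
which is the claim.

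**Main obstacle.** The delicate point is the degree-shift isomorphism $H^{n-i}_J(S)\cong H^{d-i}_I(R)$: one must be careful that it is genuinely an isomorphism of $R$-modules compatible with the later Koszul computation, not merely of abstract modules, and that the hypothesis "$\mathbf W$ part of a regular system of parameters" really holds for an arbitrary surjection $g$ of complete regular local rings (this is where completeness and regularity of $S$ are essential — it uses the structure theorem for complete regular local rings and that a surjection of such has kernel generated by a regular sequence extendable to a full s.o.p.). Once that structural fact is in hand, everything else is a matter of iterating Theorem \ref{r1}-style degree shifts and the Ext base-change already used in Lemma \ref{koz-lemma-1}.
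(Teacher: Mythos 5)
Your structural setup is right (the kernel of $g$ is generated by $n-d$ elements extending to a regular system of parameters, and one reduces to $n-d=1$), but both of the two isomorphisms on which your reduction rests are false, and the errors do not cancel in any way you could exploit without the paper's key lemma.

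First, the claimed ``degree-shift'' isomorphism $H^{n-i}_J(S)\cong H^{d-i}_I(R)$ does not hold. Take $S=k[[x]]$, $R=k=S/(x)$, $J=(x)$, $I=0$, $i=0$: then $H^1_J(S)=E_S(k)$ is infinite-dimensional while $H^0_I(R)=k$. The composite-functor argument you sketch fails because $H^{n-d}_{(\mathbf W)}(S)$ is not $S/(\mathbf W)$ but the (huge) top local cohomology module, so applying $H^{\bullet}_J$ to it does not return $H^{\bullet}_I(R)$. What is actually true, and what the paper proves, is that $H^{d-i}_I(R)\cong\ker\bigl(W\colon H^{n-i}_J(S)\to H^{n-i}_J(S)\bigr)$, a proper submodule in general: from $0\to S\xrightarrow{W} S\to R\to 0$ one gets the long exact sequence in $H^{\bullet}_J$, and the essential input --- Lemma \ref{m-torsion} (and \ref{m-torsion-char 0}) --- is that multiplication by $W\in J$ on the $W$-torsion $F$-finite (resp.\ holonomic) module $H^{r}_J(S)$ is \emph{surjective}, which breaks the long exact sequence into short ones. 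This surjectivity is the real content of the proof and is absent from your argument.

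Second, the Ext comparison $\Ext^j_S(k,N)\cong\Ext^j_R(k,N)$ for an $R$-module $N$ is also false: with $S=k[[x]]$, $R=k$, $N=k$ one has $\Ext^1_S(k,k)=k$ but $\Ext^1_R(k,k)=0$. Base change along a \emph{surjection} is not like the flat base change used in Lemma \ref{koz-lemma-1}; the change-of-rings spectral sequence does not degenerate, and indeed $H^j(\m_S;N)$ genuinely differs from $H^j(\m_R;N)$ (you noticed the binomial multiplicities yourself, but the proposed fix reintroduces the same problem). The correct mechanism is that the extra Koszul direction $W$ exactly absorbs the discrepancy between $H^{n-i}_J(S)$ and $H^{d-i}_I(R)$: writing $\m_S=(\bX',W)$ with $\bX'$ lifting a regular system of parameters of $\m_R$, one shows $H_0(W,H_j(\bX',H^{n-i}_J(S)))=0$ (again by Lemma \ref{m-torsion}, applied to the $F$-finite/holonomic modules $H_j(\bX',H^{n-i}_J(S))$ over $S/(\bX')$) and $H_1(W,-)$ of these modules recovers $H_j(\bX',H^{d-i}_I(R))$. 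So the two degree shifts must be treated simultaneously through the torsion-surjectivity lemma; they cannot be handled by two separate ``identifications'' as in your plan.
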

\begin{proof}
  Since $R$ is regular, $\ker g$ is generated by $n -d$ elements that form part of
a minimal system of generators of the maximal ideal $\m_S$ of $S$. By induction on $n- d$
we are reduced to the case that $n- d = 1$, so  $\ker g $ is an ideal generated by one
element $u \in \m_S \setminus \m_S^2$. By Cohen's structure theorem $S = k[[X_1,\ldots, X_{d+1}]]$ and
by a change of variables we can assume $u = X_{d+1}$.

We have an exact sequence
\[
0 \rt S \xrightarrow{X_{d + 1}} S \rt R \rt 0.
\]
We take local cohomology with respect to $J$.
We have an exact sequence for all $m \geq 0$
\[
\cdots \rt  H^m_I(R) \rt H^{m+1}_J(S) \xrightarrow{X_{d + 1}}  H^{m+1}_J(S) \rt.
\]
We note that $X_{d+1} \in J$. So $H^r_J(S)$ are $X_{d+1}$-torsion for all $r \geq 0$. By \ref{m-torsion} and \ref{m-torsion-char 0} the map
\[
H^{r}_J(S) \xrightarrow{X_{d + 1}}  H^{r}_J(S) \quad \text{is surjective for all $r \geq 0$.}
\]
Thus we have an exact sequence for all $m \geq 0$
\[
 0\rt  H^m_I(R) \rt H^{m+1}_J(S) \xrightarrow{X_{d + 1}}  H^{m+1}_J(S) \rt 0.
\]
Fix $m \geq 0$. Let $\bX^\prime = X_1, \ldots, X_d$. Taking Koszul homology \wrt \ $bX^\prime$ yields an exact sequence
\[
\cdots H_j(\bX^\prime, H^m_I(R)) \rt H_j(\bX^\prime, H^{m+1}_J(S) ) \xrightarrow{X_{d + 1}}  H_j(\bX^\prime, H^{m+1}_J(S)  ) \rt \cdots
\]
Set $\N_j = H_j(\bX^\prime, H^{m+1}_J(S) ) $ and $T = S/(\bX^\prime)$.
When characteristic $k$ is zero we have that $\N_j $ is holonomic  $\D_k(T)$-module, \cite[4.2, 4.4]{Bjork}. If characteristic $p > 0$ we have $\N_j$ is $F_T$-finite. Furthermore we note that $\N_j$ is $X_{d+1}$-torsion for all $j$.  Thus the map $\N_j \xrightarrow{X_{d+1}} \N_j$ is surjective for all $j$; see \ref{m-torsion} and \ref{m-torsion-char 0}.
It follows that  for all $j$
\begin{align*}
  H_0(X_{d+1}, \N_j)  & = 0, \\
  H_1(X_{d+1}, \N_j)  &= H_j(\bX^\prime, H^m_I(R)).
\end{align*}
Set $\bX = \bX^\prime, X_{d+1}$. It follows that  for all $j \geq 0$
\[
H_j(\bX, H^{m +1}_J(S))  \cong H_1(X_{d+1}, H_{j-1}(\bX^\prime, H^{m+1}_J(S))) \cong H_{j-1}(\bX^\prime, H^m_I(R)).
\]
It is nicer to consider Koszul cohomology.
We have
\[
H^j(\bX, H^{m+1}_J(S)) \cong  H^j(\bX^\prime, H^m_I(R)).
\]
The result follows.
\end{proof}
we now give
\begin{proof}[Proof of Theorem \ref{koszul-app-body}] Let $R \xrightarrow{\phi} A$ and $S \xrightarrow{\psi} A$ be surjections with kernels $I, J$ respectively.
By \ref{koz-lemma-1} we may assume $R, S$ are complete.  Let $T = R\wh{\otimes}_k S$ be the complete tensor product. Consider the surjection $\theta  = \phi \wh{\otimes} \psi \colon T \rt A$ and let $W = \ker \theta$.

Now $\theta$ factors through $\phi$. So we have by \ref{quotient} an isomorphism
$$H^j(\m_T, H^{\dim T - i}_W(T) ) \cong H^j(\m_R, H^{\dim R - i}_I(R) ) $$
 of $k$-vector spaces. Similarly we have an isomorphism
 $$H^j(\m_T, H^{\dim T - i}_W(T) ) \cong H^j(\m_S, H^{\dim S - i}_J(S) ) $$
  of $k$-vector spaces. The result follows.
\end{proof}


\begin{thebibliography}{10}

\bibitem{BM}
M.~Blickle,
\emph{The intersection of homology D-module in finite characteristic},
Thesis(Ph.D.) University of Michigan,
ProQuest LLC, Ann Arbor, MI, 2001.

\bibitem{Bjork}
J.-E. Bj$\ddot{o}$rk, \emph{Rings of differential operators}, North-Holland Math. Library \textbf{21}, North Holland, Amsterdam, 1979.

\bibitem{BS}
M.~P.~Brodmann and R.~Y.~Sharp,
\emph{Local cohomology: an algebraic introduction with geometric applications}, Cambridge Studies in Advanced Mathematics 60, Cambridge University Press,~Cambridge, 1998.


\bibitem{BH}
W.~Bruns and J.~Herzog, \emph{Cohen-{M}acaulay rings}, Cambridge Studies in
  Advanced Mathematics, vol.~39, Cambridge University Press, Cambridge,(Revised edition) 1997.

\bibitem{E} D.~Eisenbud,
Commutative algebra; With a view toward algebraic geometry.
 Graduate Texts in Mathematics, 150. Springer-Verlag, New York, 1995.



\bibitem{HM}
 S.~Huckaba and T.~ Marley,
\emph{On associated graded rings of normal ideals},
J. Algebra, 222, (1999), 146--163.



\bibitem{HuSh}
C.~Huneke and R.~Sharp,
\emph{ Bass Numbers of Local Cohomology Modules},
 Transactions of the AMS 339, (1993), 765–-779.

\bibitem{24}
S.~B.~Iyengar, G.~J.~Leuschke,  A.~Leykin, C.~Miller, E.~Miller,A.~Singh and U.~Walther,
\emph{Twenty-four hours of local cohomology},
Grad. Stud. Math., 87
American Mathematical Society, Providence, RI, 2007.

\bibitem{Lyu-1}
G.~Lyubeznik,
\emph{Finiteness Properties of Local Cohomology Modules (an Application of D-modules to Commutative Algebra)},
 Inv. Math. 113, (1993), 41–-55.

\bibitem{Lyu-2}
\bysame,
\emph{F-modules: applications to local cohomology and D-modules in characteristic $p>0$},
J. Reine Angew. Math. 491, (1997), 65–-130.

\bibitem{Lyu-inj}
\bysame,
\emph{Injective dimension of D-modules: a characteristic-free approach},
J. Pure Appl. Algebra 149, (2000),  205–-212.

\bibitem{Lyu-comp}
\bysame
\emph{On the vanishing of local cohomology in characteristic p $>$ 0},
Compos. Math.142, (2006), 207--221.

\bibitem{Lyu-3}
\bysame
\emph{A characteristic-free proof of a basic result on $\mathcal{D}$-modules},
J. Pure Appl. Algebra 215, (2011),  2019--2023.


\bibitem{Mat}
H.~Matsumura, \emph{Commutative ring theory}, second ed., Cambridge
  Studies in Advanced Mathematics, vol.~8, Cambridge University Press,
  Cambridge, 1989.


\bibitem{MaZhang}
 L. Ma and W. Zhang,
  \emph{Eulerian graded D-modules},
  Math. Res. Lett. \textbf{21}, (2014),  149–-167.


\bibitem{PS} C.~Peskine and L.~Szpiro,
{\it Dimension projective finie et cohomologie locale. Applications \`{a} la d\'{e}monstration de conjectures de M.~Auslander, H.~Bass et A.~Grothendieck},
 Inst. Hautes \'{E}tudes Sci. Publ. Math., No. 42,  (1973), 47--119.


\bibitem{P}
T.~J.~Puthenpurakal,
 \emph{Graded components of local cohomology modules},
  Collect. Math. 73, (2022), no. 1, 135--171.


\bibitem{Z}
Y.~Zhang,
\emph{Graded F-modules and local cohomology},
Bull. Lond. Math. Soc.44, (2012), 758--762.

\end{thebibliography}
\end{document}